\numberwithin{equation}{section}
\def \ep{\varepsilon}
\renewcommand{\l}{\left}
\renewcommand{\r}{\right}
\def \Q{\mathbb{Q}}
\def \rL{\mathrm L}
\def \N{\mathbb{N}}
\def \M2{\mathrm{M}_2}
\def \R{\mathbb{R}}
\def \Z{\mathbb{Z}}
\def \sl2r{\mathrm{SL}(2,\R)}
\newcommand{\beq}{\begin{equation}}
\newcommand{\eeq}{\end{equation}}
\def\diag{\operatorname{diag}}
\def\ran{\operatorname{ran}}
\def \one{\mathbf{1}}
\newcommand{\eqdef}{\stackrel{\rm def}{=\kern-3.6pt=}}
\theoremstyle{plain}
\newtheorem{theorem}{\bf Theorem}[section]
\newtheorem{lemma}[theorem]{\bf Lemma}
\newtheorem{prop}[theorem]{\bf Proposition}
\newtheorem{cor}[theorem]{\bf Corollary}
\theoremstyle{definition}
\theoremstyle{remark}
\newtheorem{remark}[theorem]{\bf Remark}
\renewcommand{\le}{\leqslant}
\renewcommand{\ge}{\geqslant}
\newcommand{\dist}{\mathop{\mathrm{dist}}\nolimits}
\renewcommand{\qed}{\vrule height7pt width5pt depth0pt}
\title[Sharp arithmetic localization for
monotone potentials]{Sharp arithmetic localization for quasiperiodic operators with monotone potentials}
\author[S. Jitomirskaya]{Svetlana Jitomirskaya}
\address{Department of Mathematics, University of California, Berkeley, CA 94720, USA.}
\email{sjitomi@berkeley.edu}
\author[I. Kachkovskiy]{Ilya Kachkovskiy}
\address{Department of Mathematics,
	Michigan State University,
	Wells Hall, 619 Red Cedar Road,
	East Lansing, MI 48824,
	United States of America}
\email{ikachkov@msu.edu}
\date{}
\begin{document}

\begin{abstract}We prove the universality of sharp arithmetic
  localization for all %a wide class of
  one-dimensional quasiperiodic Schr\"odinger operators with
  anti-Lipschitz monotone
  potentials. %In the process, we significantly streamline several
  %parts of the sharp treatment of frequency resonances \cite{JL2}. %That is, the spectra of such operators are pure point on the set of energies with $\{E\colon L(E)>\beta(\alpha)\}$. Here, $L(E)$ is the Lyapunov exponent at the energy $E$ and $\beta(\alpha)=\limsup\limits_{k\to +\infty}\frac{\log q_{k+1}}{q_k}$ is the measure of irrationality of the frequency. 

\end{abstract}
\maketitle
\section{Introduction and main results}
For one-dimensional ergodic Schr\"odinger operator families acting on $\ell^2(\Z)$:
\beq
\label{eq_h_def}
(H(x)\psi)(n)=\psi(n+1)+\psi(n-1)+f(T^nx)\psi(n),%\quad x\in \mathbb R
\eeq
where $T:M\to M$ is an ergodic map, $x\in M$ and $f:M\to \mathbb R,$
positivity of the Lyapunov exponent (see \eqref{thouless}) is often taken in
physics literature as a signature of localization.
Yet, spectral localization  (pure point spectrum with exponentially decaying
eigenfunctions) does not in general follow from the positivity of the
Lyapunov exponents, as was first shown in \cite{Gor, as}. General powerful
spectral corollaries of positive Lyapunov exponents include absence of
absolutely continuous spectrum through the Pastur-Ishii theorem
(e.g. \cite{CFKS}) and a.e. zero-dimensionality of spectral measures
\cite{simonspectral}.  However, distinguishing between pure point and
singular continuous spectrum requires a delicate study of the
interplay between the Lyapunov growth and {\it resonances}: box
restrictions with very close eigenvalues.

Fot quasiperiodic operators, with $T$ being an ergodic shift $x\mapsto
x+\alpha$ on a
torus $\mathbb T=\mathbb R/\mathbb Z,$ one kind of such resonances comes purely from the
frequency: if $\mathrm{dist}(q\alpha,\Z)<e^{-cq}$ for infinitely many $q,$ this creates infinitely many resonances of
exponential strength and interferes with proofs of localization,
thus requiring Diophantine conditions. We will call those
``frequency resonances''. For sufficiently large $c$ this
leads to absence of eigenvalues \cite{as}.

A pathway to exploit positivity of the Lyapunov exponents to prove
localization was originally developed in \cite{Lana94} in the context of
almost Mathieu operators. It was significantly developed into a robust
method by Bourgain
and collaborators, see \cite{bbook}, but in a non-arithmetic way. As
for the arithmetic results, it was conjectured in \cite{jcongr} that,
for the almost Mathieu operators, for (arithmetically) a.e. phase,
frequency resonances are the only type that appears, and that there is
a {\it sharp arithmetic transition} between localization and singular continuity: the spectrum is pure point when  $
L(E)>\beta(\alpha)$ and singular continuous when $
0<L(E)<\beta(\alpha),$ where $L(E)$ is the Lyapunov exponent at the
energy $E$ and $\beta(\alpha)$, defined by \eqref{beta}, is the measure of irrationality of the frequency.

For almost Mathieu operators, $L(E)$ does not depend on $E$ for
energies $E$ in the spectrum, and is a function of the coupling
constant only. Remarkably, the same arithmetic transition was
partially proved
earlier, in \cite{simmar}, for the Maryland model, a quasiperiodic
operator with $v(x)=\lambda\tan \pi x$ (see \cite{JL} for
the full proof of a more precise statement), where $L(E)$ does depend
on $E$, this becoming a transition inside the spectrum.

A long standing problem is the {\it universality} of the sharp arithmetic transition for
quasiperiodic operators. Here we prove such universality for 
quasiperiodic Schr\"odinger operators with monotone $f$ satisfying an
anti-Lipschitz condition.

It will be more convenient for us to deal with periodic functions on
$\mathbb R$ rather than functions on $\mathbb T.$ Thus we will study
\beq
\label{eq_h_def}
(H(x)\psi)(n)=\psi(n+1)+\psi(n-1)+f(x+n\alpha)\psi(n),\quad x\in \mathbb R
\eeq
with $\gamma$-monotone (``anti-Lipschitz'') periodic
functions $f$. More precisely, let $f$ be periodic

\beq
\label{eq_f1}
f\colon \R\to [-\infty,+\infty),\quad f(x+1)=f(x).
\eeq
For $\gamma>0,$ we will say that $f$ is {\it $\gamma$-monotone on $[0,1)$} if
\beq
\label{eq_f2}
f(y)-f(x)\ge \gamma(y-x),\quad \text{for}\quad 0\le x<y< 1.
\eeq
In particular, $f$ must be finite on $(0,1)$.

We will also require the standard integrability condition
\beq
\label{eq_f3}
\int_0^1\log (1+|f(x)|)\,dx<+\infty,
\eeq
which is needed for the Lyapunov exponent to be finite, similar to \cite{Kach}.

If $f$ is unbounded, the original expression \eqref{eq_h_def} only defines the operator for $x\in \R\setminus(\Z+\alpha \Z)$. However, one can naturally extend it to all values of $x$ by considering the operator at {\it infinite coupling}: if $f(x+m\alpha)=-\infty$, consider the subspace of $\ell^2(\Z)$ defined by $\psi(m)=0$, and only require \eqref{eq_h_def} to hold for $n\neq m$. The new operator will become a direct sum of two half-line operators with Dirichlet boundary conditions, and our results will apply to this case.

The operator family $\eqref{eq_h_def}$, parametrized by $x$, is a family of (possibly unbounded) quasiperiodic operators, a particular case of an ergodic operator family. Let $L(E)$ be the Lyapunov exponent of this family at the energy $E$ (see the definition in the next section). The goal of the present paper is to establish new sharp connections between the values $L(E)$, the measure of irrationality of $\alpha$, and the spectral type of the operators $H(\cdot)$. Assume that $\alpha$ is irrational, and $\frac{p_k}{q_k}$ is the sequence of continued fraction approximants of $\alpha$. The measure of irrationality of $\alpha$ is defined by
\beq\label{beta}
\beta(\alpha):=\limsup\limits_{k\to +\infty} \frac{\log q_{k+1}}{q_k}.
\eeq
The following is the main result of the paper.
\begin{theorem}
\label{th_main} Suppose the operator family \eqref{eq_h_def} satisfies $\eqref{eq_f1}$ -- $\eqref{eq_f3}$. Then the set
$$
\{E\in \R\colon L(E)>\beta(\alpha)\}
$$
can only support purely point component of the spectral measure, and every generalized eigenfunction of $H(x)$ with $E$ in the above set decays exponentially.
\end{theorem}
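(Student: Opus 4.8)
The strategy is the classical one for arithmetic localization via transfer matrices and Green's functions, but organized around the anti-Lipschitz monotonicity, which plays the role usually played by the trigonometric structure in almost Mathieu. The starting point is that $L(E) > \beta(\alpha)$ is an open condition in $E$ and entails, by the Thouless formula and the standard Craig--Simon upper semicontinuity, positivity of the Lyapunov exponent on a whole neighborhood; moreover the large-deviation theorem for transfer matrix norms along the monotone cocycle (the analogue of \cite{bbook}, adapted to unbounded $\gamma$-monotone $f$ using the integrability \eqref{eq_f3}, as in \cite{Kach}) gives that for any $\eps>0$ and $n$ large, $\mathrm{mes}\{x:\big|\tfrac1n\log\|A_n(x,E)\|-L(E)\big|>\eps\}$ decays subexponentially. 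One then wants to use this to bound Green's functions $G_{[a,b]}(x;E)=(H_{[a,b]}(x)-E)^{-1}$ off-diagonally by $e^{-(L(E)-\eps)|m-n|}$ on a suitable family of ``good'' boxes.

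First I would establish the key counting/covering input: because $f$ is $\gamma$-monotone on $[0,1)$, the translates $\{f(x+j\alpha)\}$ are, after sorting, spread out by at least $\gamma$ times the gaps of $\{x+j\alpha \bmod 1\}$, and in particular on an interval of length $n$ there is at most one index $j$ with $x+j\alpha$ within $\delta$ of the discontinuity/singularity point $0$; this is the mechanism that rules out \emph{potential} resonances and leaves only \emph{frequency} resonances. Quantitatively, the three-distance theorem for $j\alpha \bmod 1$ together with monotonicity shows that a box $[n-\ell, n+\ell]$ is ``non-resonant'' for the given $E$ — meaning $\|(H_{[n-\ell,n+\ell]}(x)-E)^{-1}\|$ is not too large and its Green's function decays at rate close to $L(E)$ — unless $n$ is within $C e^{-c\ell}$-comparable distance of a near-return time of the rotation, i.e. unless $|n|$ is comparable to some $q_k$ with $\log q_{k+1}/q_k$ close to $\beta(\alpha)$. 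Here the condition $L(E)>\beta(\alpha)$ is exactly what makes the decay rate beat the worst resonant scale. The monotonicity also gives the needed lower bound on $\mathrm{dist}(E,\mathrm{spec}\,H_{[a,b]}(x))$ away from a small exceptional set of $x$, via a Cauchy-interlacing / eigenvalue-variation argument in the single large coupling coordinate — this replaces the semialgebraic-set estimates used in the general Bourgain--Goldstein approach.

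Next, given a generalized eigenfunction $\psi$ with $H(x)\psi=E\psi$ and $|\psi(n)|\le C(1+|n|)$, I would run the standard block-resolvent expansion: for any site $n$, pick a box $I\ni n$ that is non-resonant and of scale $\ell$ with $\ell/|n|$ bounded below, write $\psi(n) = -G_I(n,a)\psi(a-1)-G_I(n,b)\psi(b+1)$ where $a-1,b+1$ are the boundary neighbors, and iterate. Using the decay rate $L(E)-\eps$ from non-resonant boxes and the polynomial bound on $\psi$, one concludes $|\psi(n)|\le e^{-(L(E)-\beta(\alpha)-2\eps)|n|}$ for all sufficiently large $|n|$, once one checks that along the way one never has to pass through a stretch of consecutive resonant scales of total length more than a $\beta(\alpha)/L(E)$-fraction — which is guaranteed by the definition \eqref{beta} as a $\limsup$ and the sub-exponential separation of the $q_k$. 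Exponential decay of every generalized eigenfunction then yields, by the Schnol/Berezanskii argument, that the spectral measure restricted to $\{L(E)>\beta(\alpha)\}$ is pure point with exponentially localized eigenfunctions.

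The main obstacle I expect is the \emph{uniform} control of resonances at the arithmetically critical scales: one must show that when $|n|\approx q_k$ and $\log q_{k+1}\approx \beta(\alpha) q_k$, a box around $n$ can still be chosen so that the eigenvalue of $H_I(x)$ closest to $E$ is at distance $\ge e^{-o(\ell)}$, uniformly in a set of $x$ of full measure along the orbit. This is where the $\gamma$-monotonicity must be pushed hardest: the single ``deep well'' coming from $x+j\alpha$ near $0$ dominates, and one needs a quantitative lower bound on the spacing between $E$ and the corresponding perturbed eigenvalue, which amounts to controlling the derivative of that eigenvalue in the coupling variable together with an a priori gap estimate — essentially a Kachkovskiy-type \cite{Kach} analysis of the monotone cocycle made effective at exponentially small scales. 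Handling the genuinely unbounded / infinite-coupling case within this estimate (where the deep well becomes a Dirichlet truncation) is the technically delicate point, but the direct-sum reduction described after \eqref{eq_f3} isolates it cleanly.
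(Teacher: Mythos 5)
Your outline follows the broad template (LDT plus Green's function / Poisson iteration plus Schnol), but it misses the ingredient that actually makes the sharp $L(E)>\beta(\alpha)$ threshold achievable, and the LDT you invoke is not the one that works here.

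First, the LDT you propose is a dynamical large-deviation estimate for transfer matrix norms $\|A_n(x,E)\|$, ``adapted to unbounded $f$ as in \cite{Kach}.'' The paper explicitly avoids this route: its Theorem~\ref{th_ldt} is proved directly from the Thouless formula $L(E)=\int\log|E-E'|\,dN(E')$, by writing $\frac1n\log|P_n(x,E)|$ as a Riemann sum over box eigenvalues and controlling it via an analogue of Koksma's inequality for the IDS (Corollary~\ref{cor_koksma_ids}). Crucially, the determinant is split as $P_n=P_n^{<B_-}P_n^{\mathrm{mid}}P_n^{>B_+}$, with only the middle factor satisfying the LDT; the small and large factors are bounded separately using the $\gamma$-monotonicity (\ref{eq_p_minus_twosided}) and the unbounded part (\ref{eq_p_plus_twosided_twosided}). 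This decomposition is what makes the argument work uniformly for unbounded $f$, without a uniform-upper-bound-on-cocycles input of the Jitomirskaya--Mavi type. Your transfer-matrix LDT would need such an a priori bound, which is exactly what fails in the unbounded setting.

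Second and more seriously, you identify the resonant scales $|n|\approx q_k$ as ``the main obstacle'' and propose to handle them by ``controlling the derivative of [a] perturbed eigenvalue in the coupling variable together with an a priori gap estimate.'' This is not the right mechanism and does not close the gap. Near $\ell q_k$ the problem is that a box of length $\sim 2q_k$ sees an approximate repetition of the orbit, so the sampled phases $x+j\alpha$ come in near-coincident pairs; one cannot avoid the determinant being small at a \emph{single} sampling point. What one can do --- and what the paper does --- is observe that the function $x\mapsto P_n^{<B_-}(x,E)$ is, via the monotone eigenvalue branches $\Lambda_k$ and their intersection points $z_k(E)$, dominated from below by a polynomial in $x$ with roots at the $z_k$, so that among $t+1$ sampling points spread by $d$ at least one produces a value $\ge (d/2)^t t!$ (Lemma~\ref{lemma_polynomial}/\ref{lemma_sampling_points}). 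Applying this with $d\sim|\ell|\,\|q_k\alpha\|$ to clusters of sampling points produces the factor $\exp(\log|\ell| - \beta q_k)$ in the resonant case, and summing $\log(\ell!)$ along a chain of resonant zones is precisely what telescopes to give decay at rate $L(E)-\beta(\alpha)$. Your proposal has no substitute for this Lagrange-interpolation step; without it, you have no way to certify a good box at the critically resonant scales, and the sharp threshold $L>\beta$ is not reachable (only $L>C\beta$ for some $C>1$, as in earlier work you cite).

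Finally, the patching step you sketch (``never pass through a stretch of consecutive resonant scales of total length more than a $\beta/L$ fraction'') is too vague to be sure it yields the sharp constant; the paper instead formalizes the resolvent iteration as a sum over terminating paths and shows the minimal-weight path dominates (Lemmas~\ref{lemma_terminating}, \ref{lemma_dominating}), which is what makes the sharp bookkeeping of the $\log(\ell!)$ gains rigorous.
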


In a separate note \cite{jkgordon} we establish that the set $$
\{E\in \R\colon 0<L(E)<\beta(\alpha)\}
$$
can only support singular continuous spectrum for a.e. $x\in\R$, for
a class containing all $v$ of locally bounded variation satisfying \eqref{eq_f3}. This is the
most general sharp Gordon-type argument. This has
been previously proved for Lipschitz $v$ in \cite{AYZ} (see also \cite{martini},
as well as for the Maryland model \cite{simmar,JL}, and, more
generally, meromorphic potentials \cite{JY}.

Together, this establishes the universality of the sharp arithmetic
transition for operators  \eqref{eq_h_def} satisfying $\eqref{eq_f1}$
-- $\eqref{eq_f3}$.

%\begin{remark}
%\label{rem_uniform_localization}
%As in the previous works, localization is, in fact, locally uniform in the following sense. For every $\delta>0$, every generalized eigenpair with
%$$
%H(x)\psi=E\psi,\quad L(E)>\beta(\alpha)+\delta,\quad \|\psi\|_{\ell^{\infty}(\Z)}=1
%$$
%has
%$$
%|\psi(n)|\le e^{-((L(E)-\beta(\alpha)-\ep(\delta,n-n_0)|n-n_0(\psi)|)},
%$$
%where
%$$
%\ep(\delta,k)\to 0\quad \text{as}\quad k\to +\infty. 
%$$
%\end{remark}
Also, as in \cite{JK,Kach}, localization turns out to be uniform in the same sense, see Corollary \ref{cor_uniform_localization}.

The above results are, of course, only meaningful in the regime of positive Lyapunov exponents. It turns out to be the case for a large class of the potentials under consideration. First, one has uniform positivity for non-perturbatively large $\gamma$, see \cite{Kach}:
%\begin{prop}
%\label{prop_no_ac} Under the assumptions $(\mathcal F_1)$ -- $(\mathcal F_5)$, $\sigma_{\mathrm{ac}}(H(x))=\varnothing$ for all $x$.
%\end{prop}
\begin{prop}
Assuming $\eqref{eq_f1}$ -- $\eqref{eq_f3}$, the Lyapunov exponent of the family \eqref{eq_h_def} satisfies the lower bound
$$
L(E)\ge \max\{0,\log(\gamma/2e)\}.
$$
\end{prop}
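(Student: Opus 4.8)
The bound $L(E)\ge 0$ holds for every Schr\"odinger cocycle (since $\det A_n(x,E)=1$ forces $\|A_n(x,E)\|\ge 1$), so the real content is the estimate $L(E)\ge\log(\gamma/2e)$. The plan is to bound $L(E)$ from below by the logarithm of the finite‑volume characteristic polynomials and then to control the averaged distribution of their roots using only $\gamma$‑monotonicity. Recall that $L(E)=\lim_{n\to\infty}\frac1n\int_0^1\log\|A_n(x,E)\|\,dx$, the limit existing by subadditivity (integrability of the integrand being guaranteed by \eqref{eq_f3}). One of the matrix entries of $A_n(x,E)$ is, up to a sign, the characteristic polynomial $\det\!\big(E-H_{[0,n-1]}(x)\big)$ of the restriction of $H(x)$ to $\{0,\dots,n-1\}$ with Dirichlet boundary conditions; since $\|A_n\|$ dominates each entry,
$$
L(E)\ \ge\ \liminf_{n\to\infty}\frac1n\int_0^1\log\bigl|\det\!\big(E-H_{[0,n-1]}(x)\big)\bigr|\,dx\ =\ \liminf_{n\to\infty}\int_{\R}\log|E-t|\,d\bar\rho_n(t),
$$
where $\bar\rho_n$ is the probability measure on $\R$ obtained by averaging over $x\in[0,1)$ the normalized eigenvalue‑counting measures of $H_{[0,n-1]}(x)$. (Condition \eqref{eq_f3}, together with Weyl's inequality comparing the eigenvalues of $H_{[0,n-1]}(x)$ to the values $f(x+k\alpha)$, ensures $\int_\R\log^+|E-t|\,d\bar\rho_n(t)<\infty$.)

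The heart of the matter is: \emph{$\bar\rho_n$ has density at most $1/\gamma$}, i.e.\ $\bar\rho_n([a,b])\le(b-a)/\gamma$ for every $a<b$. View $x$ as running over $\R/\Z$ and let $E^\uparrow_1(x)\le\dots\le E^\uparrow_n(x)$ be the ordered eigenvalues of $H_{[0,n-1]}(x)$. Off the $n$ points of $\R/\Z$ at which some $\{x+k\alpha\}$ crosses an integer, each diagonal entry $f(x+k\alpha)$ is $\gamma$‑monotone in $x$, hence $H_{[0,n-1]}(x)$ increases at rate $\ge\gamma$ in the order sense, and by Weyl's monotonicity so does each branch $E^\uparrow_j$: in particular $\tfrac{d}{dx}E^\uparrow_j(x)\ge\gamma$ a.e. At each of those $n$ points $H_{[0,n-1]}(x)$ changes by a single diagonal entry (a rank‑one change, possibly an infinite downward one, which merely expels an eigenvalue from every fixed interval), so the eigenvalues before and after interlace; any genuine jumps of $f$ move eigenvalues upward. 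Now fix $a<b$, let $\Phi$ be a primitive of $\one_{[a,b]}$, and use the pointwise inequality $\#\{j:E^\uparrow_j(x)\in[a,b]\}\le\frac1\gamma\sum_j\one_{[a,b]}(E^\uparrow_j(x))\,\tfrac{d}{dx}E^\uparrow_j(x)$, valid a.e. Integrating over $\R/\Z$, the right‑hand side is $\frac1\gamma$ times the integral of the absolutely continuous part of the derivative of $g(x):=\sum_j\Phi(E^\uparrow_j(x))$; since $g$ has bounded variation with zero total change around the circle, nonnegative singular‑continuous part, and jumps whose ``upward'' contributions have the favourable sign, this is at most $\frac1\gamma$ times the sum of the absolute values of the $n$ ``downward'' rank‑one jumps of $g$, each of which is $\le b-a$ by eigenvalue interlacing. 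Hence $\gamma\cdot n\cdot\bar\rho_n([a,b])\le n(b-a)$.

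To conclude, recall the bathtub principle: among probability measures $\mu$ with $d\mu\le\frac1\gamma\,dt$, the minimum of $\int_\R\log|E-t|\,d\mu(t)$ is attained at $\mu=\frac1\gamma\one_{[E-\gamma/2,E+\gamma/2]}\,dt$ and equals $\frac1\gamma\int_{-\gamma/2}^{\gamma/2}\log|s|\,ds=\log(\gamma/2)-1=\log(\gamma/2e)$. Therefore $\int_\R\log|E-t|\,d\bar\rho_n(t)\ge\log(\gamma/2e)$ for all $n$, and the displayed inequality yields $L(E)\ge\log(\gamma/2e)$.

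The step I expect to be the main obstacle is precisely the density estimate for $\bar\rho_n$. A naive count — each of the $\le n$ eigenvalue branches dwells in $[a,b]$ for time at most $(b-a)/\gamma$ on each of the $\le n$ monotonicity arcs — gives only density $\le n/\gamma$, which is useless. The point is that the ``wrap‑around'' transitions are rank‑one, so the branches are coupled across arcs through interlacing and the \emph{total} sojourn time of all branches in $[a,b]$ is $O\!\big(n(b-a)\big)$, not $O\!\big(n^2(b-a)\big)$; carrying this out rigorously — accounting for jumps and a possible singular‑continuous part of $f$, and for unbounded $f$ — is the technical core of the argument.
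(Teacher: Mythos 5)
Your proof is correct and follows the same two‑step outline that the paper (deferring to \cite{Kach}) has in mind: establish that the density of states --- or its finite‑volume averaged analogues $\bar\rho_n$ --- has density at most $1/\gamma$, and then minimize $\int\log|E-t|\,d\mu(t)$ over such probability densities via the bathtub principle to obtain $\log(\gamma/2e)$. Where you diverge is in how you obtain the density bound. The paper makes this immediate through Proposition~\ref{prop_monotone_branches} and Remark~\ref{rem_lipschitz_ids}: the $n$ eigenvalues of $H_n(x)$ can be reorganized into $n$ globally $\gamma$‑monotone branches $\Lambda_k$ on $[0,1)$, so each branch spends total time at most $(b-a)/\gamma$ in $[a,b]$, and $dN\le\gamma^{-1}\,dt$ follows on dividing by $n$. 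You instead keep the ordered eigenvalues $E_j^\uparrow(x)$ --- which are $\gamma$‑monotone only on each of the $n$ monotonicity arcs --- and recover the same $O(n(b-a))$ total sojourn time by a bounded‑variation/interlacing bookkeeping around the circle (zero net change of $g$ by periodicity, nonnegative singular‑continuous part, and $n$ downward rank‑one jumps each contributing at most $b-a$ by eigenvalue interlacing). That argument is sound, including in the infinite‑coupling case you flag, but it replicates in disguise exactly the cross‑arc coupling that the branch reorganization makes transparent; the ``main obstacle'' you single out is real, but Proposition~\ref{prop_monotone_branches} is the paper's slicker resolution of it. A smaller difference: you use the dynamical definition $L(E)=\lim\frac1n\int\log\|A_n\|$, bound it below by the log‑determinant, and in effect reprove the lower‑bound half of the Thouless formula for the measures $\bar\rho_n$ along the way; the paper takes \eqref{thouless} as the definition of $L$, shortcutting that step (and your $L\ge 0$ via $\det A_n=1$ implicitly invokes the equivalence of the two definitions, which is fine but worth noting explicitly).
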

One also has the following almost everywhere positivity result as a
corollary of \cite{Damanik_Killip}\footnote{proving a conjecture of
  \cite{mz}.} and, in the unbounded case, \cite{Simon_Spencer}, with the density of states argument in \cite{JK,Kach}, which is particularly helpful for the case $\beta(\alpha)=0$. See also Corollary \ref{cor_almost_every_x} for further discussion.
\begin{prop}
Suppose the operator family \eqref{eq_h_def} satisfies $\eqref{eq_f1}$ -- $\eqref{eq_f3}$, and either $f$ is unbounded or has finitely many discontinuities on each trajectory of the irrational shift by $\alpha$. Then the set
$$
\{E\in \R\colon L(E)=0\}
$$
has zero Lebesgue measure and, for almost every $x$, also zero
spectral measure with respect to $H(x)$.

As a consequence of Theorem $\ref{th_main}$ and \cite{jkgordon}, under
the above assumptions, the operators $\eqref{eq_h_def}$ have, for almost
every $x,$ a sharp arithmetic transition:
\begin{enumerate}\item  Anderson localization on
  $\{E\in \R\colon L(E)>\beta(\alpha)\}$
  \item pure singular continuous spectrum on   $\{E\in \R\colon
    L(E)<\beta(\alpha)\}.$
    \end{enumerate}

\end{prop}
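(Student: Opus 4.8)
The plan is to prove both assertions of the Proposition — that $\{E\colon L(E)=0\}$ is negligible (Lebesgue, and spectrally for a.e.\ $x$) and the resulting dichotomy — by establishing, in order: (i) $\{L=0\}$ has zero Lebesgue measure; (ii) the density of states measure $\nu$ of the family is absolutely continuous with respect to Lebesgue measure; (iii) combining (i) and (ii), $\nu(\{L=0\})=0$, and transferring this to the spectral measures of the $H(x)$ for a.e.\ $x$ by the usual averaging identity; and finally (iv) assembling (iii) with Theorem \ref{th_main} and \cite{jkgordon}. Throughout we use that $L(E)\ge0$ for Schr\"odinger cocycles, so $\{L=0\}$ is exactly the complement of $\{L>0\}$, together with the Kotani-theoretic fact that $\mathrm{Leb}(\{L=0\})=0$ is equivalent to $H(x)$ having empty absolutely continuous spectrum for a.e.\ $x$.

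For (i) we split according to the hypothesis. If $f$ is bounded: being $\gamma$-monotone and $1$-periodic it increases on $[0,1)$ with $\lim_{y\uparrow1}f(y)\ge f(0)+\gamma>f(0)=f(1)$, so it has a jump of size $\ge\gamma$ at every integer, and by assumption finitely many discontinuities per period; since $\alpha$ is irrational every orbit $\{x+n\alpha\bmod1\}$ is dense and accumulates at such a jump. This is exactly the setting of \cite{Damanik_Killip}, which then gives $L(E)>0$ for Lebesgue-a.e.\ $E$. If $f$ is unbounded: then $|f(y)|\to\infty$ as $y$ approaches the singular point of $f$ (using the infinite-coupling convention where $f$ equals $-\infty$). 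Fix $x$ whose orbit avoids $\Z$ — this holds for a.e.\ $x$, and on the remaining null set $H(x)$ splits into Dirichlet half-line operators, where the assertion is only easier. By density of the orbit we may choose $n_1<n_2<\cdots\to+\infty$ and $m_1>m_2>\cdots\to-\infty$ with $|f(x+n_k\alpha)|,\ |f(x+m_k\alpha)|\ge2^k$, so the potential carries barriers of summable inverse strength escaping to both $+\infty$ and $-\infty$; the Simon--Spencer trace-class decoupling \cite{Simon_Spencer} then forbids absolutely continuous spectrum for a.e.\ $x$. In either case Kotani theory gives $\mathrm{Leb}(\{L=0\})=0$.

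For (ii)--(iii): the density-of-states argument of \cite{JK,Kach}, which exploits the strict monotonicity of $H(x)$ in $x$ (off the discontinuity locus, $f(x+n\alpha)$ increases in $x$ at rate at least $\gamma$), shows that the integrated density of states $N$ is Lipschitz, so $\nu=dN$ is absolutely continuous; with (i) this yields $\nu(\{L=0\})=0$. Since $\nu(S)=\int_0^1\langle\delta_0,\chi_S(H(x))\delta_0\rangle\,dx$ for Borel $S\subseteq\R$, we get $\langle\delta_0,\chi_{\{L=0\}}(H(x))\delta_0\rangle=0$ for a.e.\ $x$; the covariance relation $\langle\delta_n,\chi_S(H(x))\delta_n\rangle=\langle\delta_0,\chi_S(H(x+n\alpha))\delta_0\rangle$ upgrades this to: for a.e.\ $x$, $\chi_{\{L=0\}}(H(x))\delta_n=0$ for all $n\in\Z$, hence $\chi_{\{L=0\}}(H(x))=0$, i.e.\ the spectral measure of $H(x)$ charges no subset of $\{L=0\}$. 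This is the first assertion. For the dichotomy, Theorem \ref{th_main} gives that the spectral measure restricted to $\{L>\beta(\alpha)\}$ is pure point with exponentially decaying eigenfunctions — Anderson localization on $\{L>\beta(\alpha)\}$ — while $\{L<\beta(\alpha)\}=\{L=0\}\sqcup\{0<L<\beta(\alpha)\}$ carries only singular continuous spectrum for a.e.\ $x$: the first piece by the first assertion, the second by \cite{jkgordon}. (When $\beta(\alpha)=0$ the set $\{0<L<\beta(\alpha)\}$ is empty and one gets Anderson localization on all of the spectrum up to the null set $\{L=0\}$.)

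I expect the main difficulty to lie in the unbounded case of (i): one must make the Simon--Spencer decoupling quantitative for barriers produced by a monotone singularity sampled along the irrational rotation — securing $\sum_k|f(x+n_k\alpha)|^{-1}<\infty$, and likewise towards $-\infty$, simultaneously for a.e.\ $x$ — check that the $H(x)$ so obtained, possibly unbounded below, is the self-adjoint realization to which Kotani theory and the density-of-states identity apply under only \eqref{eq_f3}, and confirm that the Lipschitz bound on $N$, hence $\nu\ll\mathrm{Leb}$, persists in this unbounded regime where the comparison with the diagonal operator is less transparent.
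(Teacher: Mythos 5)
Your proposal follows essentially the same route as the paper, which proves this Proposition only by citation: Damanik--Killip (bounded case with a jump forced by $\gamma$-monotonicity) or Simon--Spencer (unbounded case) to exclude absolutely continuous spectrum, Kotani theory to conclude $\mathrm{Leb}\{L=0\}=0$, the Lipschitz bound on the IDS from Proposition \ref{prop_monotone_branches} to get $dN\{L=0\}=0$, and the averaging/covariance identity \eqref{ids_spectral} to transfer this to spectral measures for a.e.\ $x$, before invoking Theorem \ref{th_main} and \cite{jkgordon} for the dichotomy. Your write-up supplies the details the paper leaves implicit and is correct in substance.
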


The key to establishing the universality of  sharp transition in the class of
monotonic potentials, and, more generally, in proving localization, is in showing that frequency resonances are the
only ones that appear.  The framework to proving localization for operators with monotone potentials was
developed in \cite{JK} for the bounded case, where localization for
small couplings was a surprise.  It was extended in
\cite{Kach} for the unbounded case, in the regime of no exponential
resonances, thus the Diophantine condition on $\alpha :
\beta(\alpha)=0.$ The results of \cite{JK} were extended (also to the underlying circle
map dynamic) to localization on the set $L(E)>C \beta(\alpha)$ in
\cite{Zhu}.

Sharp arithmetic transitions were established originally for the
Maryland model and  the almost Mathieu operator in 
\cite{simmar,JL, AYZ,JL2,JHY}. In a very recent \cite{gj} the universality
of sharp
arithmetic transition was
established for all operators \eqref{eq_h_def} with {\it analytic} $f$
that are of type I (an open set of {\it analytic} potentials introduced in \cite{gjy}, see
\cite{wm} for history). Here we 
prove the universality for anti-Lipschitz monotone potentials with
essentially no other conditions.

Proofs of sharp transitions fall, so far, into two categories: the
ones based on some sort of duality \cite{simmar,JL, AYZ, gjy} and the ones
based on Green function estimates that trace back to
\cite{Lana94}. The sharp way to treat frequency resonances was
developed in \cite{JL2}. It has been extended to sharp results for the
other types of resonances for
the almost Mathieu in \cite{JL2, Lresonant}, the Maryland model
in \cite{JHY}, and adapted to even acceleration $1$ potentials, for which
most of the almost Mathieu considerations apply almost directly, in
\cite{lpos}. 

Here we make the sharp treatment of frequency resonances ideas of
\cite{JL} work with the methods of \cite{JK,Kach}. In the cases where we
rely on \cite{Damanik_Killip} for a.e. positivity of the Lyapunov exponent,
compared to
previous results that use the ideas of \cite{JL2}, we lose the arithmetic
description of a.e. phase, but the latter is impossible in the generality
we obtain our results. In cases with uniformly positive Lyapunov
exponent, we obtain localization for {\it all} phases.

In the process, we
also significantly streamline several parts of the method of
\cite{JL2}, developing several tools and approaches that we believe
will become useful in future proofs of localization where the analysis
of exponential resonances (of various kinds) is involved.

% Throughout the paper, we will heavily use integer intervals. In most case, it will be clear from the context. We will reserve the notation $\langle a,b\rangle$ for real intervals, and use the notation
% $$
% \langle a;b\rangle:=\langle a,b\rangle\cap \Z,\quad "\langle\rangle"="[]","[)","(]",\text{ or }"()"
% $$
% for integer intervals.

% The method of the proof of Theorem \ref{th_main} is based on the
% analysis of Green's functions, first applied to sharp arithmetic
% transitions in \cite{JL2,JL3} and adapted with additional sharp phase
% analysis for the Maryland model in \cite{JHY}.
After the large deviation estimates are obtained, we utilize the key  ideas
of \cite{JL2} for the sharp resonance treatment. In particular,  concepts and structure of
resonant/non-resonant zones and regular points/good intervals in our proof is
parallel to \cite{JL2}, albeit with some modifications. We would
sometimes use similar notation. In the case of general monotone
potentials, however, one cannot take advantage of any (even
approximate) specific trigonometric
polynomial structure of the determinants of the finite volume
operators. An approach to producing good intervals for monotone
potentials was developed in \cite{JK,Kach} in the Diophantine case (for
intervals of size $q_k$). Using these intervals, one can push the
arguments towards some range of values of $\beta$ as in \cite{Zhu},
but it appears not sufficient for the sharp transitions. We outline
below the key novel technical aspects of the present paper, compared
to \cite{JK, Kach,JL2}
\begin{enumerate}
        \item Our proof of the  large deviation-type Theorem
          \ref{th_ldt} (LDT) is directly based on the Thouless
          formula, and does not require any dynamical considerations.
	\item We obtain LDT
          for the restrictions of the operator \eqref{eq_h_def} on
          intervals of {\it any size} $n=sq_k+r$. For comparison, the
          LDTs of \cite{JK, Kach} were for intervals of size $q_k$
          only. It turns out that one can obtain meaningful results by treating the operator as a perturbation of $sq_k$ copies of the operator on the interval of size $q_k$, of rank $O(s+|r|)$.
	\item The main application of the large deviation theorem is an ability to produce a shift of a given interval so that the value of the determinant of the finite volume operator is not too small. For an interval of size $n$, this ultimately corresponds to considering the values of the determinant at $n+1$ points, obtained from the irrational rotation. A general Lagrange interpolation argument for the complete determinant in the same generality as \cite{JL2,JHY} does not appear to be available. However, by changing the size of the intervals in the non-resonant cases, one can create gaps in the set of sampling points, and ultimately apply Lagrange interpolation to a polynomial obtained from a cluster of sampling points (Lemma \ref{lemma_sampling_points}).
	\item We develop a streamlined version of obtaining
          localization as a consequence of existence of good intervals
          (this part is very general), based on the observation that the estimates of maximal values of the generalized eigenfunctions in resonant zones are somewhat similar to the block resolvent expansion. The inequalities between those maximal values can be interpreted as regularity of the generalized eigenfunction at resonant points (see \eqref{eq_regularity_psi} for the definition). The exponent in the definition of regularity depends on whether the point is resonant or not and, if it is resonant, on how far from the origin the resonance is located. Then we use the general fact (Lemmas \ref{lemma_dominating}, \ref{lemma_terminating}) that, ultimately, the decay of the solution is determined by the worst possible rate one can obtain through Poisson iterations, with the error from combinatorial factors and faster decaying terms negligible on the logarithmic scale.
%	\item As mentioned earlier, the proof of Theorem \ref{th_singular} is based on the Gordon's argument, which requires some differentiability of the function $f$. Any piecewise monotone function is differentiable almost everywhere. However, one needs some simultaneous control on the finite differences at multiple points of the irrational rotation, and the final argument is somewhat more complicated (and it does not use differentiability directly). Theorem \ref{th_singular} appears to be new even in the continuous case.
\end{enumerate}

\section{Some preliminaries: spectral theory, ergodic operators, and irrational rotations}
\subsection{Preliminaries from the spectral theory of ergodic
  operators} In this subsection, we formulate some basic results from
the spectral theory of ergodic operators and establish immediate consequences for the model \eqref{eq_h_def}. Let 
$$
H_n(x):=\l.\one_{[0;n-1]}H(x)\r|_{\ran \one_{[0;n-1]}}
$$
be the $(n\times n)$-block of $H(x)$, or equivalently the Dirichlet restriction of the operator $H(x)$ into $[0;n-1]$.
Denote by $N_n(x,E)$ the counting function of the eigenvalues of $H_n(x)$: 

$$
N_n(x,E):=\#\l(\sigma(H_n(x))\cap [-\infty,E)\r).
$$
Since we are also considering the case of infinite coupling, let us assume that the corresponding infinite eigenvalue is always equal to $-\infty$ for the purposes of the counting function. The {\it integrated density of states} of the operator family $H(x)$ is defined as
\beq
\label{ids_def}
N(E)=\lim\limits_{n\to \infty}\frac{1}{n}\int_{[0,1)}N_n(x,E)\,dx.
\eeq
The function $N(\cdot)$ is monotone non-decreasing and continuous, and defines a probability measure $dN(E)$ on $\mathbb R$, which is called the {\it density of states measure}. The topological support of $dN$ is equal to the almost sure spectrum of $H(x)$ (see, for example, \cite{CFKS}*{Chapters 9,10}). It is well known that $dN$ is the average of spectral measures of the family $H(x)$:
\beq
\label{ids_spectral}
dN(E)=\int_{[0,1)}\langle d\mathbb{E}_{H(x)}(E)e_0,e_0 \rangle\,dx,
\eeq
where $d\mathbb{E}_{H(x)}$ is the differential of the projection-valued spectral measure of the operator $H(x)$.

Note that, while the existence of the limit \eqref{ids_def} is usually proved by the means of ergodic theorems (see, for example, \cite{CFKS}*{Chapter 9}), our Corollary \ref{cor_pointwise_ids} shows that, for the class of potentials under consideration in Theorem \ref{th_main}, the convergence is actually uniform in $x$:
\beq
\label{eq_uniform_ids}
\frac{1}{n}N_n(x,E)\rightrightarrows N(E),\quad\text{as}\quad n\to +\infty,
\eeq
and one can also use the limit of the left hand side as a definition of $N(E)$.

The hopping term $\Delta$ can move $k$-th eigenvalue of $H_n(x)$ at most by $2$: that is,
$$
|\lambda_k(H_n(x))-\lambda_k(\diag\{f(x+k\omega):k\in [0,n-1]\})|\le \|\Delta\|\le 2.
$$
By passing to the limit, we obtain 
\beq
\label{ids_inv}
|N^{-1}(x)-f(x)|\le 2,\quad x\in (0,1).
\eeq
Note that, since $N$ is constant in spectral gaps of $H$, the inverse function is not uniquely defined for some (at most countably many) values of $x$. Still, \eqref{ids_inv} holds for every choice of the inverse function due to monotonicity of both $N$ and $f$.

%Denote by $M_n(x,E)$ the $n$-step transfer matrix,
%\beq
%\label{m_def}
%M_n(x,E):=\prod_{k=n-1}^0\begin{pmatrix}E-f(x+k\alpha)&-1\\1&0 \end{pmatrix}=\begin{pmatrix}P_n(x,E)&-P_{n-1}(x+\alpha,E)\\ P_{n-1}(x,E)&-P_{n-2}(x+\alpha,E)\end{pmatrix},
%\eeq
%where $P_n(x,E)=\det H_n(x,E)$. Due to $\F_4$, the Lyapunov exponent
%\beq
%\label{gamma_def1}
%L(E)=\lim_{n\to\infty}\frac{1}{n}\int_{[0,1)}\log\|M_n(x,E)\|\,dx=
%\inf_{n\in \N}\frac{1}{n}\int_{[0,1)}\log\|M_n(x,E)\|\,dx
%\eeq
The statement of the main Theorem \ref{th_main} requires the definition of the Lyapunov exponent, which is usually defined dynamically using transfer matrices. Unlike \cite{JK,Kach}, our proof will not require any dynamical considerations such as the uniform upper bound or even the ergodic theorems. The only property of the Lyapunov exponent that will be used is the Thouless formula, and therefore, for the purposes of the present paper, we can use it as the definition:
\beq
\label{thouless}
L(E):=\int_{\R} \log|E-E'|\,dN(E').
\eeq
It is known \cite{JK,Kach} that, for $\gamma$-monotone potentials, the density of states measure $dN$ is absolutely continuous with respect to the Lebesgue measure, with density bounded by $\gamma^{-1}$, see also Proposition \ref{prop_monotone_branches}. As a consequence, $\log|E-E'|$ is locally integrable with respect to $dN(E')$ (this also follows from the general theory such as in \cite{CFKS}*{Chapter 9}). Since  \eqref{ids_inv} implies $\log(1+|N^{-1}|)\in \rL^1(0,1)$, the integral in the right hand side of \eqref{thouless} converges, and therefore the Lyapunov exponent is well defined and continuous on $\R$.

%A {\it generalized eigenfunction} is a formal solution of the equation $H(x)\psi=E\psi$, satisfying a polynomial bound $\psi(n)\le C(1+|n|)^C$. It is well known that the set of generalized eigenvalues (that is, values of $E$ for which non-trivial generalized eigenfunctions exist) of a self-adjoint Schr\"odinger operator has full spectral measure (see, for example, \cites{Berez1,Simon_semi,Snol,RuiShnol}). The following proposition establishes sufficient conditions to obtain purely point spectrum for an operator family with almost everywhere positive Lyapunov exponent, see also \cite{JK}*{Section 8}.
%\begin{prop}
%\label{loc_sufficient}
%Let $I\subset \R$ be an open interval. Suppose that the operator family $H(x)$ satisfies the following:
%\begin{enumerate}
%	\item The density of states measure $dN|_{I}$ in is absolutely continuous with respect to the Lebesgue measure.
%	\item $L(E)>0$ for Lebesgue a.~e./all $E\in I$.
%	\item For every $(x,E)$ with $L(E)>0$, $E\in I$, every generalized eigenfunction $H(x)\psi=E\psi$ belongs to $\ell^2(\Z)$.
%\end{enumerate}
%Then $\sigma(H(x))$ is purely point for a.~e./all $x\in \R$.
%\end{prop}
\subsection{Irrational rotation and discrepancy}
Let $x_1,\ldots,x_n\in [0,1]$ be real numbers. Following \cite{Kuipers}*{Definition 1.2}, define their {\it discrepancy} by
\beq
\label{eq_discrepancy1}
D_n^{\ast}(x_1,\ldots,x_n)=\sup\limits_{0<t\le 1}\l|\frac{\#\{k\colon x_{k}\in [0,t)\}}{n}-t\r|.
\eeq
Koksma's inequality (see, for example, \cite{Kuipers}*{Theorem 2.5.1}) states that, for any function $f$ on $[0,1]$ of bounded variation, we have
\beq
\label{eq_koksma}
\l|\int_0^1 f(x)\,dx-\frac{1}{n}\sum_{k=1}^n f(x_k)\r|\le \mathrm \|f\|_{\mathrm{BV}([0,1])} D_n^{\ast}(x_1,\ldots,x_n).
\eeq
The following estimate, which follows directly from the triangle inequality, will be useful:
\begin{multline}
\label{eq_discrepancy_sub}
D_{m+n}^{\ast}(x_1,\ldots,x_m,y_1,\ldots,y_n)=\sup\limits_{0<t\le 1}\l|\frac{\#\{k\colon x_{k}\in [0,t)\}+\#\{\ell \colon y_{\ell}\in [0,t)\}}{m+n}-t\r|\\
\le  \frac{m}{m+n}(D_m^{\ast}(x_1,\ldots,x_m)+\frac{n}{m+n}D_n^{\ast}(y_1,\ldots,y_n).
\end{multline}

We will now discuss some relevant and well known results on the irrational rotation. Most of them are well known and contained in, for example, \cite[Section 2.3]{Kuipers}, see also \cite{Khinchine,Raven}. Let $\alpha\in \R\setminus\Q$ have a continued fraction expansion
$$
\alpha=[a_0,a_1,a_2,\ldots]=a_0+\frac{1}{a_1+\frac{1}{a_2+\frac{1}{\ldots}}}.
$$
As usual, denote the continued fraction approximants to $\alpha$ by truncating the above continued fraction:
$$
\frac{p_k}{q_k}:=[a_0,a_1,a_2,\ldots,a_k]=a_0+\frac{1}{a_1+\frac{1}{a_2+\frac{1}{\ldots+\frac{1}{a_k}}}}.
$$
For $y\in \R$, let also
$$
\|y\|:=\dist(y,\Z).
$$
\begin{prop}
\label{prop_rotation_trajectory}
Let $\frac{p_k}{q_k}$ be a continued fraction approximant of $\alpha\in \R\setminus\Q$. Then the corresponding trajectory of irrational rotation has the following structure:
\beq
\label{eq_rotation_trajectory}
\l\{\{j\alpha\}\colon j=0,\ldots,q_k-1\r\}=\l\{\frac{jp_k}{q_k}+\frac{j\theta_j}{q_k q_{k+1}}\colon j=0,\ldots,q_{k}-1\r\},
\eeq
where $|\theta_j|<1$. The points in the left hand side of \eqref{eq_rotation_trajectory} split the interval $(0,1)$ into $q_k$ intervals. The lengths of these intervals can only take two possible values $\|q_{k-1}\alpha\|$ and $\|(q_k-q_{k-1})\alpha\|$. We have
\beq
\label{eq_trajectory_estimates1}
\frac{1}{2 q_k}\le \frac{1}{q_{k}}-\frac{q_{k-1}}{q_k q_{k+1}}\le \|q_{k-1}\alpha\|<\frac{1}{q_k}<\|(q_k-q_{k-1})\alpha\|\le \frac{1}{q_k}+\frac{1}{q_{k+1}}\le \frac{2}{q_k}.
\eeq
As a consequence, one also has
\beq
\label{eq_trajectory_estimates2}
\|j\alpha\|\ge \|q_k\alpha\|, \quad\text{for}\quad 0\le j\le q_{k+1}-1.
\eeq
\end{prop}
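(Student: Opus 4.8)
The plan is to treat the three assertions of Proposition~\ref{prop_rotation_trajectory} separately, reducing each to a standard continued-fraction fact and reserving the combinatorial core --- the two-gap property --- for the three-distance theorem. First I would establish the representation \eqref{eq_rotation_trajectory}. Set $\theta:=q_{k+1}(q_k\alpha-p_k)$; then $j\alpha=\frac{jp_k}{q_k}+\frac{j\theta}{q_kq_{k+1}}$ holds exactly (with no reduction mod $1$), and $|\theta|=q_{k+1}\|q_k\alpha\|<1$ by the standard bound $\|q_k\alpha\|<\frac1{q_{k+1}}$. Put $\theta_j:=\theta$ for $j\ge1$ and $\theta_0:=0$ (so one may in fact take $\theta_j$ independent of $j$). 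Reducing modulo $1$: since $p_kq_{k-1}-p_{k-1}q_k=\pm1$ forces $\gcd(p_k,q_k)=1$, the numbers $\{jp_k/q_k\}$ for $0\le j<q_k$ are exactly $0,\frac1{q_k},\dots,\frac{q_k-1}{q_k}$ in some order, while the perturbation obeys $\bigl|\frac{j\theta_j}{q_kq_{k+1}}\bigr|<\frac{q_k-1}{q_kq_{k+1}}<\frac1{q_{k+1}}<\frac1{q_k}$; hence for each $j\ge1$ the point $\{jp_k/q_k\}+\frac{j\theta_j}{q_kq_{k+1}}$ stays strictly inside $(0,1)$, taking fractional parts commutes with adding the perturbation, and \eqref{eq_rotation_trajectory} follows.

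Next, for the partition of $(0,1)$ into $q_k$ arcs I would invoke the three-distance (Steinhaus) theorem, available in \cite{Kuipers}: the $q_k$ points cut the circle into arcs of at most three lengths, the largest being the sum of the two others, and for the special value $N=q_k$ exactly two lengths occur. These two lengths are $\|q_{k-1}\alpha\|$ (with multiplicity $q_k-q_{k-1}$) and $\|q_{k-1}\alpha\|+\|q_k\alpha\|$ (with multiplicity $q_{k-1}$), the multiplicities being forced by the identity $q_k\|q_{k-1}\alpha\|+q_{k-1}\|q_k\alpha\|=1$ established below. Finally $\|q_{k-1}\alpha\|+\|q_k\alpha\|=\|(q_k-q_{k-1})\alpha\|$: from $\|q_m\alpha\|=(-1)^m(q_m\alpha-p_m)$ one computes $(q_k-q_{k-1})\alpha-(p_k-p_{k-1})=(-1)^k(\|q_k\alpha\|+\|q_{k-1}\alpha\|)$, so the claim holds as soon as $\|q_k\alpha\|+\|q_{k-1}\alpha\|<\frac12$ --- the only regime relevant here, since the proposition is applied with $k\to\infty$.

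For the quantitative bounds \eqref{eq_trajectory_estimates1} the key identity is $q_k\|q_{k-1}\alpha\|+q_{k-1}\|q_k\alpha\|=1$, which follows from $\|q_m\alpha\|=(-1)^m(q_m\alpha-p_m)$ together with $p_kq_{k-1}-p_{k-1}q_k=(-1)^{k-1}$. Hence $\|q_{k-1}\alpha\|=\frac1{q_k}-\frac{q_{k-1}}{q_k}\|q_k\alpha\|$; combined with $0<\|q_k\alpha\|<\frac1{q_{k+1}}$ and $q_{k+1}=a_{k+1}q_k+q_{k-1}\ge 2q_{k-1}$ this gives $\frac1{2q_k}\le\frac1{q_k}-\frac{q_{k-1}}{q_kq_{k+1}}\le\|q_{k-1}\alpha\|<\frac1{q_k}$, and then $\|(q_k-q_{k-1})\alpha\|=\|q_{k-1}\alpha\|+\|q_k\alpha\|=\frac1{q_k}+\frac{q_k-q_{k-1}}{q_k}\|q_k\alpha\|\in\bigl(\frac1{q_k},\,\frac1{q_k}+\frac1{q_{k+1}}\bigr]\subseteq\bigl(\frac1{q_k},\,\frac2{q_k}\bigr]$, which is \eqref{eq_trajectory_estimates1}. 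The consequence \eqref{eq_trajectory_estimates2} is precisely the minimality property of continued-fraction denominators: $q_k$ is the denominator of a best approximation, so $\|j\alpha\|\ge\|q_k\alpha\|$ for every $1\le j\le q_{k+1}-1$.

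The only step that is not routine bookkeeping is the two-gap property. If one prefers not to quote \cite{Kuipers}, it can be proved by the successor-map argument --- the orbit point immediately to the right of $\{j\alpha\}$ among the $q_k$ points is $\{(j+q_{k-1})\alpha\}$ or $\{(j-(q_k-q_{k-1}))\alpha\}$ according to which side of an explicit threshold $\{j\alpha\}$ lies, so only two gap lengths can arise --- but this is classical. The genuinely fiddly point is the $(-1)^k$ sign bookkeeping if one wants to say which of $\{q_{k-1}\alpha\}$, $\{(q_k-q_{k-1})\alpha\}$ is the left and which the right neighbour of $0$; since the proposition records only the two lengths, this can be kept implicit.
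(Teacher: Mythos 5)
The paper itself provides no proof of Proposition~\ref{prop_rotation_trajectory}: it simply cites \cite{Kuipers}, \cite{Khinchine}, and \cite{Raven} and treats the statement as standard. Your proposal supplies a complete and correct derivation of exactly the classical facts being cited, so there is no ``paper proof'' to diverge from; the question is only whether your filled-in argument is sound, and it is. A few remarks on what you got right and what could be tightened.

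The representation step is correct, and your observation that one may take $\theta_j\equiv\theta:=q_{k+1}(q_k\alpha-p_k)$ \emph{independent} of $j$ is a genuine improvement over the statement as written. You also correctly identify the implicit mod-$1$ convention in \eqref{eq_rotation_trajectory}: as printed, $\tfrac{jp_k}{q_k}$ can exceed $1$, and the intended meaning is $\{jp_k/q_k\}+\tfrac{j\theta_j}{q_kq_{k+1}}$, with the perturbation too small ($<1/q_{k+1}<1/q_k$) to push any point with $j\ge 1$ out of $(0,1)$. Reducing the two-gap property to the three-distance theorem at $N=q_k$, with lengths $\|q_{k-1}\alpha\|$ (multiplicity $q_k-q_{k-1}$) and $\|q_{k-1}\alpha\|+\|q_k\alpha\|$ (multiplicity $q_{k-1}$), is the standard route and matches the identity $q_k\|q_{k-1}\alpha\|+q_{k-1}\|q_k\alpha\|=1$; the algebra establishing this identity and the conversion $\|q_{k-1}\alpha\|+\|q_k\alpha\|=\|(q_k-q_{k-1})\alpha\|$ is correct. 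Your use of $\|q_{k-1}\alpha\|=\tfrac1{q_k}-\tfrac{q_{k-1}}{q_k}\|q_k\alpha\|$, $0<\|q_k\alpha\|<\tfrac1{q_{k+1}}$, and $q_{k+1}\ge 2q_{k-1}$ gives \eqref{eq_trajectory_estimates1} cleanly, and \eqref{eq_trajectory_estimates2} is indeed just the best-approximation property. Two edge-case caveats you flag are real and worth keeping in mind: the identity $\|q_{k-1}\alpha\|+\|q_k\alpha\|=\|(q_k-q_{k-1})\alpha\|$ needs the sum to be $\le\tfrac12$ (which can fail for very small $q_k$, but is harmless here since only $k\to\infty$ is used), and \eqref{eq_trajectory_estimates2} as printed should read $1\le j\le q_{k+1}-1$, since $j=0$ gives $\|0\|=0$. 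Neither affects the way the proposition is used in the rest of the paper.
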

Since $j<q_k<q_{k+1}$, \eqref{eq_rotation_trajectory} immediately implies
\beq
\label{eq_discrepancy_q}
D_{q_k}(\{x\},\{x+\alpha\},\ldots,\{x+(q_k-1)\alpha\})\le \frac{1}{q_k}+\frac{1}{q_{k+1}}\le\frac{2}{q_{k}},
\eeq
and therefore, for $n=sq_k+r$, iterating \eqref{eq_discrepancy_sub} yields
\beq
\label{eq_discrepancy4}
D_n^{\ast}(x,\{x+\alpha\},\{x+2\alpha\},\ldots,\{x+(n-1)\alpha\})\le \frac{2}{q_{k}}+\frac{|r|}{n}\le \frac{4(s+|r|)}{n}.
\eeq

\section{Counting functions for box eigenvalues}
Recall that $H_n(x)$ is the restriction of $H(x)$ into $\ell^2[0;n-1]$ with the Dirichlet boundary conditions. We will identify $H_n(x)$ with an $(n\times n)$-block of $H(x)$: for example,
$$
H_5(x)=\begin{pmatrix}
f(x)&1&0&0&0\\
1&f(x+\alpha)&1&0&0\\
0&1&f(x+2\alpha)&1&0\\
0&0&1&f(x+3\alpha)&1\\
0&0&0&1&f(x+4\alpha)
\end{pmatrix}.
$$
The goal of this section is to study the dependence on $x$ of the eigenvalues of $H_n(x)$.

The proof of localization in the Diophantine case, developed in \cite{JK,Kach}, is based on the analysis of $H_q(x)$, where $q$ is a denominator of a continued fraction approximant of $\alpha$. While this analysis allows to extend localization results somewhat beyond the Diophantine case (see \cite{Zhu}), the complete setting of Theorem \ref{th_main} is based on a more advanced argument. The following are two main results of this section.
\begin{enumerate}
	\item For $n=sq+r$, the values of the counting function of the eigenvalues of $H_n(x)$ can change at most by $O(s+|r|)$ as $x$ runs over $[0,1)$ (Corollary \ref{cor_counting}).
	\item The normalized counting function $\frac{1}{n}N_n(x,E)$ is $O\l(\frac{s+|r|}{n}\r)$-close to the integrated density of states $N(E)$ (Corollary \ref{cor_pointwise_ids}).
	\item An analogue of Koksma's inequality holds for functions of bounded variation on $\R$: an average of a function over the eigenvalues of $H_n(x)$ is close to its integral against the density of states measure (Corollary \ref{cor_koksma_ids}).
\end{enumerate}
We also include a streamlined proof of Proposition \ref{prop_constant_counting}, originally obtained in \cite{Kach}, in a somewhat more general setting. The above results are obtained, essentially, by replacing the operator $H_n(x)$ by $s$ copies of $H_q(x)$. As in \cite{JK,Kach}, let
\beq
\label{eq_jump_points_def}
0=\beta_0<\beta_1<\ldots<\beta_{n-1}<\beta_n=1, \quad \{\beta_0,\ldots,\beta_{n-1}\}=\l\{\{-j\alpha\}\colon j\in [0;n-1]\r\}
\eeq
be the points where one of  the diagonal entries of $H_n(x)$ undergoes
a {\it negative} jump discontinuity. These points will be called {\it jump points} (for that purpose, it is convenient to identify $\beta_0$ with $\beta_n$, so that there are exactly $n$ jump points). The intervals $[\beta_j,\beta_{j+1})$ will be called {\it monotonicity intervals}. 

The spectrum of $H_n(x)$ is always simple. Let
$$
-\infty\le E_0(x)<E_1(x)<\ldots<E_{n-1}(x)
$$
be the eigenvalues of $H_n(x)$. By \eqref{eq_f2} each $E_k(x)$ is $\gamma$-monotone on every monotonicity interval $[\beta_j,\beta_{j+1})$:
$$
E_k(y)-E_k(x)\ge \gamma(y-x),\quad \beta_j\le x\le y<\beta_{j+1}.
$$
Thus negative jump discontinuities can only happen as changes from $\beta_j-0$ to $\beta_j$, and, at each of them, the whole matrix $H_n(x)$ undergoes a perturbation whose negative part is rank one. Therefore,
\beq
\label{eq_rank_one}
E_k(x)\ge E_{k-1}(x-0)\quad \forall x\in [0,1),\,\,k=1,2,\ldots,n-1. 
\eeq
The above property allows to change the numeration of the eigenvalues into ``$\gamma$-monotone branches''.
\begin{prop}
\label{prop_monotone_branches}
There exist $\gamma$-monotone functions $\Lambda_k\colon [0,1)\to \R$, that is,
\beq
\label{eq_gamma_monotonicity}
\Lambda_k(y)-\Lambda_k(x)\ge \gamma(y-x),\quad 0\le x\le y<1,
\eeq
such that
\beq
\label{eq_sigma_branches}
\sigma(H(x))=\{\Lambda_0(x),\Lambda_1(x-\beta_{n-1}),\Lambda_2(x-\beta_{n-2})\ldots,\Lambda_{n-1}(x-\beta_1)\},
\eeq
and
$$
E_k(0)=\Lambda_k(-\beta_{n-k}).
$$
\end{prop}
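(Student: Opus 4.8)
The plan is to construct the branches $\Lambda_k$ explicitly, by following the eigenvalue curves of $H_n(x)$ around the circle $\R/\Z$ and reconnecting them at the jump points $\beta_j$. I will use only the two structural facts already recorded above: on each monotonicity interval $[\beta_j,\beta_{j+1})$ every eigenvalue $E_i(\cdot)$ is $\gamma$-monotone (a consequence of \eqref{eq_f2}), and at each $\beta_j$ the matrix $H_n$ changes by a perturbation whose negative part is rank one, so that \eqref{eq_rank_one} gives
\[
E_{i-1}(\beta_j-0)\le E_i(\beta_j),\qquad i=1,\ldots,n-1,
\]
i.e.\ at $\beta_j$ the eigenvalues can only drop, and the $i$-th one after the drop stays above the $(i-1)$-st before it.

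First I would work on the circle $\R/\Z$, on which $H_n$ and all the $E_i$ are periodic, with the cyclically ordered jump points $\beta_0,\ldots,\beta_{n-1}$ ($\beta_n\equiv\beta_0$) and monotonicity intervals $I_j=[\beta_j,\beta_{j+1})$. On the graph $\{(x,E)\colon x\in\R/\Z,\ E\in\sigma(H_n(x))\}$ I would define $n$ pairwise disjoint \emph{threads} by the rule: on each $I_j$ a thread coincides with some $E_i$, and across the jump at $\beta_{j+1}$ the thread carrying $E_i$ on $I_j$ continues as $E_{i+1}$ on $I_{j+1}$, with the eigenvalue index read modulo $n$ (so that, exactly once per loop, a thread passes from $E_{n-1}$ down to $E_0$). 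By the displayed inequality this continuation has a nonnegative jump at every $\beta_{j+1}$ \emph{except} at the single ``reset'' jump of each thread; moreover each thread visits every $E_i$ exactly once as $j$ runs over all $n$ intervals, so the threads partition the graph and their reset jumps occur at $n$ distinct points.

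Given the thread whose reset is at $\beta_\ell$, I would read it off starting from $\beta_\ell$ and traversing the circle once, reparametrized by $t=x-\beta_\ell$; extending $1$-periodically gives a function $\Lambda\colon\R\to[-\infty,\infty)$ (with the value $-\infty$, in the infinite-coupling case, occurring only at the integers, where \eqref{eq_gamma_monotonicity} is then vacuous). On $[0,1)$ this $\Lambda$ is $\gamma$-monotone: on each of the $n$ monotonicity intervals it is $\gamma$-monotone because some $E_i$ is, and at each of the $n-1$ interior jump points it has a nonnegative jump, so \eqref{eq_gamma_monotonicity} is preserved across it, while the lone downward (reset) jump sits at an endpoint of $[0,1)$ and is not seen there. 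Labelling the thread with reset at $\beta_{n-k}$ as $\Lambda_k$ (with the convention $\beta_n\equiv\beta_0$, so that $\Lambda_0$ carries no shift) produces $\gamma$-monotone functions $\Lambda_0,\ldots,\Lambda_{n-1}$ together with the shifts $x-\beta_{n-k}$ appearing in \eqref{eq_sigma_branches}.

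It remains to check the two displayed identities. For \eqref{eq_sigma_branches}: at a fixed $x$ the threads partition the fibre $\{x\}\times\sigma(H_n(x))$, hence the $n$ numbers $\Lambda_k(x-\beta_{n-k})$ (arguments read modulo $1$) are precisely the $n$ eigenvalues of $H_n(x)$. For $E_k(0)=\Lambda_k(-\beta_{n-k})$: trace the thread through $(0,E_k(0))$, where $0\in I_0$; going forward through $I_1,I_2,\ldots$ it carries $E_k,E_{k+1},\ldots$, reaches $E_{n-1}$ on $I_{n-1-k}$, and resets to $E_0$ on $I_{n-k}$, so its reset is at $\beta_{n-k}$ and its parametrization shift is $\beta_{n-k}$; therefore $x=0$ corresponds to parameter $-\beta_{n-k}$, which is the asserted equality (the case $k=0$ being the degenerate one, $\beta_n\equiv 0$). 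The only genuine work is the cyclic index bookkeeping — matching each reset location $\beta_{n-k}$ to the right label $\Lambda_k$ and checking that no thread is ever forced onto an eigenvalue index outside $\{0,\ldots,n-1\}$ — and I expect this combinatorial accounting, rather than any analytic point, to be the only thing requiring care; everything follows from \eqref{eq_f2} and the rank-one structure behind \eqref{eq_rank_one}.
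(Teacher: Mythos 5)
Your ``thread'' construction is correct and is exactly an unpacked version of the paper's one-line formula $\Lambda_k(x):=E_{(j+k)\bmod n}(x+\beta_{n-k})$, $x\in[\beta_j,\beta_{j+1})$: both approaches follow the ordered eigenvalue curves around the circle $\R/\Z$, incrementing the eigenvalue index by one across each jump point so that, by \eqref{eq_rank_one}, every jump met in the interior of the reparametrized interval $[0,1)$ is nonnegative, with the single downward ``reset'' pushed to the endpoint $t=0$. The paper simply declares the required properties ``easy to see''; your write-up supplies the verification, including the cyclic bookkeeping matching the reset location $\beta_{n-k}$ to the label $k$ and yielding $E_k(0)=\Lambda_k(-\beta_{n-k})$.
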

\begin{proof}
It is easy to see that the ordering
$$
\Lambda_k(x):=E_{(j+k)\,\mathrm{mod}\,n}(x+\beta_{n-k}),\quad x\in [\beta_j,\beta_{j+1}).
$$
satisfies the required properties on each interval due to $\gamma$-monotonicity of $E_j$, and at the jump points due to \eqref{eq_rank_one}.
\end{proof}
\begin{remark}
\label{rem_lipschitz_ids}
As mentioned in \cite{Kach}, this immediately implies that the integrated density of states is Lipschitz continuous with the derivative bounded by $\gamma^{-1}$.	
\end{remark}
\begin{remark}
One can also show that the above properties define the functions $\Lambda_k$ uniquely.
\end{remark}
The behavior of each $\Lambda_j$ imitates the behavior of the function $f$, so that the eigenvalue $\Lambda_k(x-\beta_{n-k})$ is $\gamma$-monotone on the same interval as the diagonal entry $f(x-\beta_{n-k})$. In fact, for the diagonal part of $H_n(x)$ (without $\Delta$), one would have $\Lambda_k(x)=f(x)$, with \eqref{eq_sigma_branches} producing all diagonal entries of the operator due to the definition of $\beta_{n-k}$, perhaps not in the original order.
\begin{center}
\includegraphics{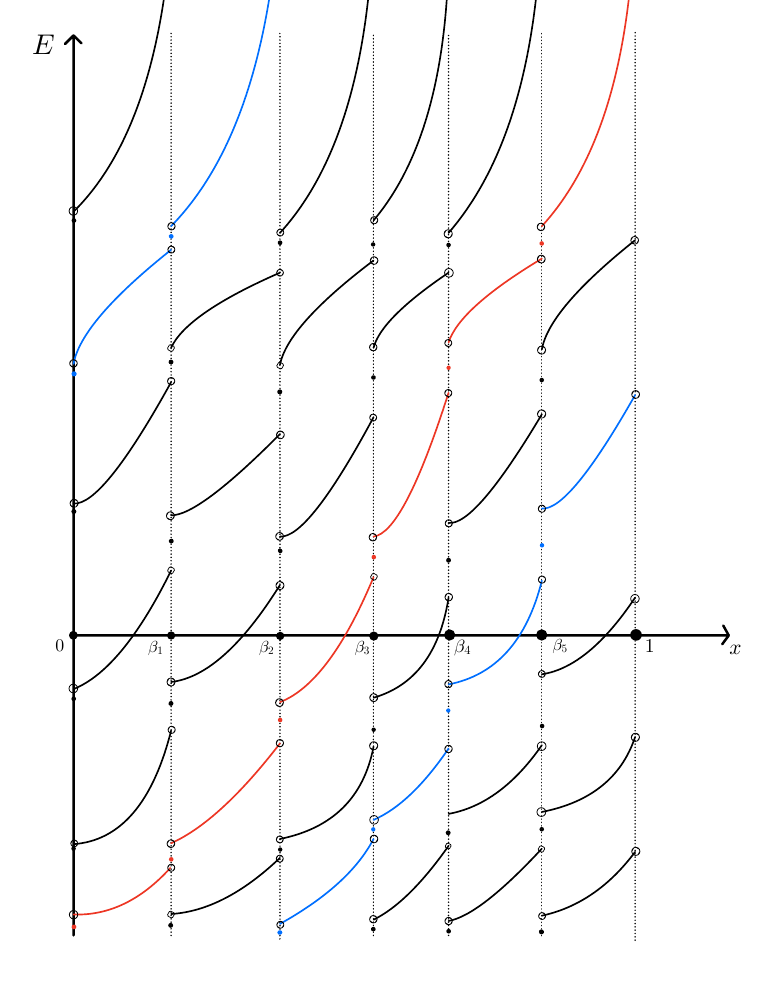}	
\end{center}

The graph below illustrates the expected generic behavior of the eigenvalues of $H_6(x)$, with $f$ bounded from below, unbounded from above, continuous on $(0,1)$ and having a jump discontinuity at the origin: $-\infty<f(0)<f(0+0)$. The red graph is the function $\Lambda_0(x)$, and the blue one is $\Lambda_4(x-\beta_2)$. As one can see, $\Lambda_4(x-\beta_2)$ behaves similarly to $\Lambda_0$ with a translated argument.

As a consequence of Proposition \ref{prop_monotone_branches}, we have the following important bound:
\beq
\label{eq_intersection_points}
\#\{x\in [0,1)\colon E\in \sigma(H_n(x))\}\le n\,\quad \forall E\in \R.
\eeq
The above set can also be described by the equation $\det(H(x)-E)=0$. The estimate \eqref{eq_intersection_points} will play a crucial role in studying the set of $x$ on which $|\det(H(x)-E)|$ is small.

One can now express the eigenvalue counting function as follows:
$$
N_n(x,E)=\#\l(\sigma(H_n(x))\cap [-\infty,E)\r)=\#\{k\colon E_k(x)< E\}=\#\{k\colon \Lambda_k(x+k\alpha)< E\}.
$$
The following result has been obtained in \cite{Kach} for the case of $f$ continuous on $[0,1)$, and only for ``good denominators'' $q_k$ satisfying $q_{k+1}\ge \sqrt{5}q_k$. Among two consecutive denominators, at least one will always be good. For the convenience of the reader, we include the modification of the proof from \cite{Kach} without the above assumptions, potentially with less optimal constants, and in somewhat more streamlined form.
\begin{prop}
\label{prop_constant_counting}
For any denominator $q$ of the continued fraction expansion of any irrational $\alpha$, we have
$$
|N_q(x,E)-N_q(y,E)|\le 16,\quad \forall x,y\in \R.
$$
\end{prop}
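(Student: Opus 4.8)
The plan is to count how much the eigenvalue counting function $N_q(x,E) = \#\{k\colon \Lambda_k(x+k\alpha)<E\}$ can change as $x$ varies, using the monotone-branch structure from Proposition \ref{prop_monotone_branches} together with the equidistribution estimate \eqref{eq_discrepancy_q}. The key observation is that each branch $\Lambda_k$ is $\gamma$-monotone on $[0,1)$ (extended periodically), so the condition $\Lambda_k(x+k\alpha)<E$ holds precisely when the fractional part $\{x+k\alpha\}$ lies in a half-open subinterval $I_k(E)\subseteq[0,1)$ (possibly empty or all of $[0,1)$) whose left endpoint is a jump point and whose right endpoint is $\Lambda_k^{-1}(E)$ (suitably interpreted). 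Thus $N_q(x,E) = \#\{k\in[0;q-1]\colon \{x+k\alpha\}\in I_k(E)\}$, a sum over the rotation trajectory of indicator functions of intervals that vary with $k$.

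First I would make the branch/interval correspondence precise: for each $k$, $\Lambda_k$ is $\gamma$-monotone, hence strictly increasing, so there is a threshold $t_k(E)\in[0,1]$ (with the convention $t_k=0$ if $\Lambda_k\ge E$ everywhere on the relevant fundamental domain and $t_k=1$ if $\Lambda_k<E$ everywhere) such that $\Lambda_k(x+k\alpha)<E \iff \{x+k\alpha - \beta_{n-k}\}\in[0,t_k(E))$ after accounting for the shift $\beta_{n-k}$ appearing in \eqref{eq_sigma_branches}. The point is that the \emph{length} $t_k(E)$ of each interval is, by construction and \eqref{ids_inv}-type reasoning, essentially the same across all $k$ up to $O(1/q)$ — in fact the total $\sum_k t_k(E)$ equals $q N_q$-average $\approx q N(E)$ — because the branches are mutually interleaved copies of the same monotone profile. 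Then I would apply the three-distance structure \eqref{eq_rotation_trajectory}: the points $\{k\alpha\}$, $k=0,\dots,q-1$, are within $O(1/q)$ of the equally spaced lattice $\{kp/q\}$, and this lattice hits a fixed interval of length $t$ in either $\lfloor qt\rfloor$ or $\lceil qt\rceil$ points; the $O(1/q)$ perturbation and the fact that only the \emph{variable} endpoints (varying by at most $O(1/q)$ as $k$ ranges) can shuffle membership for a bounded number of indices. Summing these discrepancies with \eqref{eq_discrepancy_q} gives that $N_q(x,E)$ and $N_q(y,E)$ both lie within an absolute constant of $q N(E)$, hence within $2$ of each other — and a careful bookkeeping of all the $O(1)$ error terms (endpoint effects at both ends of each $I_k$, the two distinct gap lengths, the $\beta_{n-k}$ shifts) yields the stated bound $16$.

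The main obstacle, I expect, is controlling the fact that the subintervals $I_k(E)$ genuinely depend on $k$ rather than being a single fixed interval — so this is not a literal application of discrepancy of $\{k\alpha\}$ against one test interval. The resolution is that the \emph{right} endpoints $\Lambda_k^{-1}(E)$, while $k$-dependent, are all confined to a window of length $O(1/q)$ around a common value (this is exactly the content of the branches being interleaved translates, combined with $dN$ having density $\le\gamma^{-1}$, equivalently the $\gamma$-monotonicity giving a two-sided control), and the \emph{left} endpoints are the jump points $\beta_j$ which are exactly the rotation trajectory itself. So one reduces to: (i) a clean discrepancy estimate for $\{k\alpha\}$ against the ``bulk'' interval whose endpoints are the common value, contributing $O(1)$ via \eqref{eq_discrepancy_q}; plus (ii) the number of indices $k$ for which $\{x+k\alpha\}$ falls in the $O(1/q)$-wide ``uncertainty zone'' near either endpoint, which is again $O(1)$ by the same discrepancy bound applied to intervals of length $O(1/q)$. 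I would organize the proof around these two contributions, and the $q_{k+1}\ge\sqrt5 q_k$ hypothesis from \cite{Kach} gets dropped precisely because \eqref{eq_trajectory_estimates1} already gives the two gap lengths to within a factor of $2$ unconditionally, at the cost of the weaker constant $16$.
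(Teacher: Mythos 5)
Your proposal takes a genuinely different route from the paper, and it contains a gap that I do not think can be repaired along the lines you sketch. The crux of your plan is the claim that the thresholds $\Lambda_k^{-1}(E)$ (the right endpoints of your intervals $I_k$) ``are all confined to a window of length $O(1/q)$ around a common value,'' and the justification you offer is the perturbation bound $\|\Delta\|\le 2$ together with $\gamma$-monotonicity. But this only yields confinement to a window of size $O(1/\gamma)$, not $O(1/q)$: the hopping moves the $k$-th sorted eigenvalue away from the $k$-th sorted diagonal entry by up to $2$ in energy, hence moves the intersection point by up to $2/\gamma$ in $x$. Since $\gamma$ is a fixed constant while $q\to\infty$, the rotation trajectory drops $\sim q\cdot(4/\gamma)\gg 1$ points into that ``uncertainty zone,'' so your contribution (ii) is $O(q/\gamma)$ rather than $O(1)$. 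Equivalently, in the diagonal approximation $\#\{k\colon f(x+k\alpha)<E\}$ the discrepancy argument works beautifully (fixed test interval), but $N_q(x,E)$ and that diagonal count can genuinely differ by $O(q/\gamma)$, so controlling one does not control the other. Your claim would also feed circularly off Lemma \ref{lemma_quantity_intersections}, which itself rests on Corollary \ref{cor_counting} and hence on the very proposition being proved.

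The mechanism the paper actually uses is operator-theoretic and arithmetic, not metric. One writes
\[
H_q(x+m\alpha)=H_{q-m}(x+m\alpha)\oplus H_m(x+q\alpha)+R,\qquad \rank R\le 2,
\]
and likewise $H_m(x)\oplus H_{q-m}(x+m\alpha)=H_q(x)+R'$ with $\rank R'\le 2$, then exploits the crucial arithmetic fact $\|q\alpha\|<1/q$: the phase shift from $x$ to $x+q\alpha$ crosses at most $O(1)$ jump points, so $N_m(x+q\alpha,E)$ and $N_m(x,E)$ differ by $O(1)$ via \eqref{eq_counting_monotonicity} and \eqref{eq_rank_one}. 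Chaining these rank and phase estimates gives $|N_q(x+m\alpha,E)-N_q(x,E)|\le 4$ uniformly in $m\in[0;q-1]$ (in one direction, depending on whether $\{q\alpha\}$ is near $0$ or $1$), and a short bookkeeping over the three-gap monotonicity intervals then yields the bound $16$. In short: what makes the proposition true is the block structure of $H_q$ under a shift by $m\alpha$ plus the smallness of $\|q\alpha\|$, neither of which appears in your proposal; a pure discrepancy argument against $k$-dependent test intervals cannot see this and, as argued above, does not close.
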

\begin{proof}
For $n=q$, Proposition \ref{prop_rotation_trajectory} implies that the length of each monotonicity interval satisfies
$$
\frac{1}{2q}\le \beta_{j+1}-\beta_j\le \frac{2}{q},\quad j=0,\ldots,n-1.
$$
As a consequence, for $x,y\in [0,1)$, the number of jump points between $x$ and $x\pm y$ does not exceed $\lceil 2qy \rceil$. Using \eqref{eq_rank_one}, we have
\beq
\label{eq_counting_monotonicity}
N_q(x+y,E)\le N_q(x,E)+\lceil 2qy \rceil;\quad N_q(x-y,E)\ge N_q(x,E)-\lceil 2qy \rceil,\,\quad \forall y\in [0,1).
\eeq
Similarly to \cite{JK,Kach}, we would like to compare the eigenvalue counting functions of operators $H(x)$ and $H(x+m\alpha)$. Suppose that $0\le m\le q-1$. It is easy to see that
$$
H_q(x+m\alpha)=H_{q-m}(x+m\alpha)\oplus H_m(x+q\alpha)+R,
$$
where $R$ is an operator whose rank does not exceed 2. Note that $\dist(q\alpha,\Z)<1/q$. Depending whether $\{q\alpha\}$ is closer to $1$ or $0$, we can apply the appropriate case of \eqref{eq_counting_monotonicity} and obtain
$$
N_q(x+m\alpha,E)\le N_q(x,E)+4,\quad \{q\alpha\}\in (0,1/2);
$$
$$
N_q(x+m\alpha,E)\ge N_q(x,E)-4,\quad \{q\alpha\}\in (1/2,1).
$$
The above inequalities hold for all $x\in [0,1)$ and $m\in [0;q-1]$. Similarly to \cite{Kach}, after replacing $x$ by $x+m\alpha$ and $m$ by $q-m$ we get, for $m\in [0;q-1]$:
$$
N_q(x+q\alpha)-4\le N_q(x+m\alpha,E)\le N_q(x,E)+4,\quad \{q\alpha\}\in (0,1/q);
$$
$$
N_q(x+q\alpha)-4\le N_q(x+m\alpha,E)\ge N_q(x,E)-4,\quad \{q\alpha\}\in (1-1/q,1).
$$
From these estimates, we can conclude the following: for any two monotonicity intervals $(\beta_{i},\beta_{i+1})$ and $(\beta_j,\beta_{j+1})$, one can pick $x\in (\beta_i,\beta_{i+1})$ and $y\in (\beta_j,\beta_{j+1})$ such that $|N_q(x,E)-N_q(y,E)|\le 4$. Now, assume that  $(\beta_{i-1},\beta_{i})$, $(\beta_{i},\beta_{i+1})$, $(\beta_{i+1},\beta_{i+2})$ are three consecutive monotonicity intervals, and the points $y_1$, $y_3$ are chosen using the above observation:
$$
|N(y_1,E)-N(y_3,E)|\le 4,\quad y_1\in (\beta_{i-1},\beta_{i}),\quad y_3\in (\beta_{i+1},\beta_{i+2}).
$$
Then
\begin{multline*}
N(y_1,E)+1\ge N(\beta_{i}-0,E)+1\ge N(\beta_i+0,E)\ge N(\beta_{i+1}-0,E)\\ \ge N(\beta_{i+1}+0,E)-1\ge N(y_3,E)-1.
	\end{multline*}
Therefore, we have
$$
|N(\beta_{i+1}-0,E)-N(\beta_i+0,E)|\le 6.
$$
The argument also applies for $\beta_i=0$ or $\beta_{i+1}=1$, if one uses the convention $\beta_{j+q}=\beta_j$.
We can conclude that the variation of $N(\cdot,E)$ on each monotonicity interval does not exceed $6$. 
In view of previous considerations, this proves the main claim.
\end{proof}
We are now ready to discuss the results for arbitrary values of $n$. Note that the numerical values of various constants are not of particular importance, and we sacrificed optimizing some of them for the sake of a more direct argument.
\begin{cor}
\label{cor_counting}
Let $n=sq+r$, where $q$ is a denominator of the continued fraction expansion of $\alpha$. Then
\beq
\label{eq_counting_q1}
|N_n(x,E)-s N_q(y,E)|\le 16s+4(s+|r|),\quad \forall x,y\in \R.
\eeq
As a consequence,
\beq
\label{eq_counting_q2}
|N_n(x,E)-N_n(y,E)|\le 64s+16(s+|r|),\quad \forall x,y\in \R.
\eeq
\end{cor}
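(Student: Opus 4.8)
The plan is to realize $H_n(x)$, up to a self-adjoint perturbation of rank $O(s)$, as an orthogonal direct sum of $s$ translated copies of $H_q$ together with one short leftover block, and then to combine three elementary facts: (i) the eigenvalue counting function of a direct sum is the sum of the counting functions of the summands; (ii) a self-adjoint perturbation of rank $k$ changes the counting function $N(\cdot,E)$ by at most $k$ for every $E$ (a standard min-max/interlacing estimate: write the perturbation as a difference of positive operators of rank $\le k_{\pm}$ with $k_++k_-=k$, and use that adding a positive operator does not increase the counting function and that a rank-$j$ positive operator decreases it by at most $j$); (iii) Proposition \ref{prop_constant_counting}, which lets one replace each $N_q(x+jq\alpha,E)$ by $N_q(y,E)$ at a cost of $16$ per copy.

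First I would treat the case $r\ge 0$. Partition $[0;n-1]$ into the $s$ consecutive blocks $B_j=[jq;(j+1)q-1]$, $j=0,\dots,s-1$, and the leftover block $B_s=[sq;n-1]$ of length $r$. Since $H(x)$ is tridiagonal, the Dirichlet restriction $H_n(x)$ differs from $\bigoplus_{j=0}^{s-1}H_q(x+jq\alpha)\oplus H_r(x+sq\alpha)$ only in the $s$ pairs of off-diagonal entries coupling adjacent blocks, that is, by a self-adjoint operator of rank at most $2s$. Hence by (i), (ii), and the trivial bound $0\le N_r(\cdot,E)\le r$,
\[
\Bigl|N_n(x,E)-\sum_{j=0}^{s-1}N_q(x+jq\alpha,E)\Bigr|\le 2s+r;
\]
then (iii) gives $\bigl|\sum_{j=0}^{s-1}N_q(x+jq\alpha,E)-sN_q(y,E)\bigr|\le 16s$, and the triangle inequality yields $|N_n(x,E)-sN_q(y,E)|\le 18s+r\le 16s+4(s+|r|)$, which is \eqref{eq_counting_q1}.

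For $r<0$ one cannot split $H_n(x)$ itself into $s$ full copies of $H_q$, so I would instead enlarge the interval: writing $n=sq-|r|$, the operator $H_{sq}(x)$ differs from $H_n(x)\oplus H_{|r|}(x+n\alpha)$ by a single coupling bond, whence $|N_{sq}(x,E)-N_n(x,E)|\le 2+|r|$; the $r=0$ instance of the previous paragraph (now only $s-1$ bonds are cut and there is no leftover block) gives $|N_{sq}(x,E)-sN_q(y,E)|\le 18s$, and adding the two, using $s\ge 1$, again gives $|N_n(x,E)-sN_q(y,E)|\le 18s+2+|r|\le 16s+4(s+|r|)$. The degenerate case $s=0$ is trivial since then $0\le N_n(x,E)\le n=|r|$. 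Finally, \eqref{eq_counting_q2} follows from \eqref{eq_counting_q1} by the triangle inequality: $|N_n(x,E)-N_n(y,E)|\le 2\bigl(16s+4(s+|r|)\bigr)\le 64s+16(s+|r|)$.

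The only slightly delicate point is the bookkeeping of the leftover block together with the sign of $r$: absorbing a negative $r$ by decreasing $s$ would change which multiple of $N_q$ one is comparing against, so for $r<0$ one is forced to pass to the larger operator $H_{sq}(x)$ rather than to a smaller one. Apart from that, the argument is purely the additivity-of-counting-functions-plus-low-rank-perturbation mechanism already used in \cite{JK,Kach}, and, as elsewhere in this section, the numerical constants are not optimized.
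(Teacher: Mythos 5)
Your proof is correct and follows essentially the same route as the paper: decompose $H_n(x)$ into $s$ translated copies of $H_q$ plus a leftover block via rank-$O(s)$ bond removals, invoke Proposition~\ref{prop_constant_counting} on each copy, and sum the errors. Your write-up is in fact more careful than the paper's one-line argument --- you correctly separate the $r\ge 0$ and $r<0$ cases and handle the dimension mismatch between $H_n$ and $\bigoplus_{j}H_q$ explicitly, whereas the paper phrases the whole comparison as a single ``rank $\le 2(s+|r|)$ perturbation'' without spelling out how the $|r|$ extra or missing sites enter.
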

\begin{proof}
A perturbation of rank at most $2(s+|r|)$ will transform $H_n(x)$ into 
$$
H_q(x)\oplus H_q(x+\alpha)\oplus\ldots\oplus H_q(x+(s-1)\alpha).
$$
From Proposition \ref{prop_constant_counting} applied to each summand, we have
\beq
\label{eq_counting_q}
|N_n(x,E)-sN_q(y,E)|\le 2(s+|r|)+16s,\quad \forall x,y\in \R.
\eeq
The estimate \eqref{eq_counting_q2} now follows from the triangle inequality.
\end{proof}
\begin{cor}
\label{cor_pointwise_ids}
Let $n=sq+r$, where $q$ is a denominator of the continued fraction expansion of $\alpha$. Then
\beq
\label{eq_pointwise_ids1}
\l|\frac{1}{n}N_{n}(x,E)-\frac{1}{q}N_q(y,E)\r|\le \frac{4|r|}{sq}+\frac{20}{q},\quad \forall x,y\in [0,1).
\eeq
and similarly for $n_1=s_1 q+r_1$, $n_2=s_2 q+r_2$
\beq
\label{eq_pointwise_ids3}
\l|\frac{1}{n_1}N_{n_1}(x,E)-\frac{1}{n_2}N_{n_2}(y,E)\r|\le \frac{4|r_1|}{s_1q}+\frac{4|r_2|}{s_2 q}+\frac{40}{q}.
\eeq
As a consequence, $\frac{1}{n}N_n(x,E)\rightrightarrows N(E)$ uniformly in $x$, and one has
\beq
\label{eq_pointwise_ids2}
\l|\frac{1}{n}N_{n}(x,E)-N(E)\r|\le \frac{4|r|}{sq}+\frac{40}{q}.
\eeq
\end{cor}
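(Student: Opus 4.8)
The plan is to obtain all four assertions from Corollary~\ref{cor_counting} by dividing by $n$ and keeping track of the two independent scales at play: the denominator $q$ and the number of copies $s$.

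\emph{The estimates \eqref{eq_pointwise_ids1} and \eqref{eq_pointwise_ids3}.} Write
\[
\frac1n N_n(x,E)-\frac1q N_q(y,E)=\frac1n\bigl(N_n(x,E)-sN_q(y,E)\bigr)+N_q(y,E)\Bigl(\tfrac{s}{n}-\tfrac1q\Bigr).
\]
The first term is controlled by \eqref{eq_counting_q} (the intermediate bound in the proof of Corollary~\ref{cor_counting}) divided by $n$; since $\bigl|\tfrac{s}{n}-\tfrac1q\bigr|=\tfrac{|r|}{nq}$ and $0\le N_q(y,E)\le q$, the second term is at most $|r|/n$. Using $n\ge sq$ (in the standard convention $0\le r<q$; a more general $r$ only affects the numerical constants) gives \eqref{eq_pointwise_ids1}. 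Then \eqref{eq_pointwise_ids3} is just \eqref{eq_pointwise_ids1} applied twice, with the \emph{same} denominator $q$ and the \emph{same} auxiliary point $y$, followed by the triangle inequality, so the two $\tfrac{20}{q}$ terms add to $\tfrac{40}{q}$.

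\emph{Uniform convergence and the identity $\lim=N(E)$.} Fix $E$ and $\eps>0$. Choose a denominator $q$ with $40/q<\eps/2$, and then $N$ so large that $n>N$ forces $s=\lfloor n/q\rfloor$ with $4/s<\eps/4$. For $n_1,n_2>N$, inequality \eqref{eq_pointwise_ids3} (with $y=x$) gives $\bigl|\tfrac1{n_1}N_{n_1}(x,E)-\tfrac1{n_2}N_{n_2}(x,E)\bigr|<\eps$ for every $x$, so the sequence $\tfrac1n N_n(\cdot,E)$ is uniformly Cauchy and converges uniformly to some $g(\cdot,E)$ taking values in $[0,1]$. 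Dividing \eqref{eq_counting_q2} by $n$ shows $\sup_{x,y}\bigl|\tfrac1n N_n(x,E)-\tfrac1n N_n(y,E)\bigr|\le \tfrac{80}{q}+\tfrac{16}{s}\to 0$, hence $g(\cdot,E)$ is constant in $x$; call it $N^{\ast}(E)$. Integrating the uniform convergence over $x\in[0,1)$ and comparing with \eqref{ids_def} yields $N^{\ast}(E)=N(E)$. Finally, fixing $q$ and $r$ and letting $s\to\infty$ in \eqref{eq_pointwise_ids1} gives $\bigl|\tfrac1q N_q(y,E)-N(E)\bigr|\le 20/q$; adding this to \eqref{eq_pointwise_ids1} via the triangle inequality produces \eqref{eq_pointwise_ids2}.

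\emph{Where the care is needed.} There is no genuine obstacle: the statement is essentially arithmetic on top of Corollary~\ref{cor_counting}. The one subtlety is that the right-hand side of \eqref{eq_pointwise_ids1} is small only when \emph{both} $q$ and $s$ are large, so in the convergence argument one must first fix a large denominator $q$ and only afterwards let $n$, hence $s=\lfloor n/q\rfloor$, grow; getting the order of these two limits right — rather than any individual estimate — is the point to watch.
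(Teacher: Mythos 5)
Your proof is correct and fills in the details of the paper's two-sentence argument in essentially the way the authors intend: split off the scalar factor $N_q(y,E)\bigl(\tfrac{s}{n}-\tfrac1q\bigr)$, apply Corollary~\ref{cor_counting}, then run the uniform Cauchy argument against \eqref{ids_def}. Your observation that the intermediate bound \eqref{eq_counting_q} (rather than the stated \eqref{eq_counting_q1}) is what makes the constants $4|r|/(sq)+20/q$ come out, and your caveat that negative $r$ requires slightly larger constants, are both accurate and in line with the paper's own remark that it did not optimize constants.
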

\begin{proof}
The first two estimates directly follow from Corollary \ref{cor_counting} and the triangle inequality, and the rest follows from the standard uniform Cauchy sequence argument and the definition \eqref{ids_def}. Note that this argument also implies directly that the limit in \eqref{ids_def} exists for the class of operators satisfying the conclusion of Proposition \ref{prop_constant_counting}.
\end{proof}
The last inequality \eqref{eq_pointwise_ids2} can also interpreted in terms of discrepancy \eqref{eq_discrepancy1} and also follows directly from the definition of the IDS:
\beq
\label{eq_discrepancy_ids}
D_n^*\l(N(E_0(x)),N(E_1(x)),\ldots,N(E_{n-1}(x))\r)\le \frac{4|r|}{sq}+\frac{40}{q}.
\eeq
As a consequence, one can obtain the following analogue of Koksma's inequality:
\begin{cor}
\label{cor_koksma_ids}
Let $g\colon \R\to \R$ be a function of bounded variation. Then
\beq
\label{eq_koksma_ids}
\l|\int_{\R} g(E')dN(E')-\frac{1}{n}\sum_{j=0}^{n-1} g(E_k(x))\r|\le \|g\|_{\mathrm{BV}(\R)}\l(\frac{4|r|}{sq}+\frac{40}{q}\r).
\eeq
\end{cor}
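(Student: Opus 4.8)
The plan is to deduce this from the ordinary Koksma inequality \eqref{eq_koksma} by transporting everything to $[0,1]$ via the continuous nondecreasing map $E\mapsto N(E)$, which carries $dN$ to Lebesgue measure, and by applying the discrepancy bound \eqref{eq_discrepancy_ids} to the sample points $N(E_0(x)),\ldots,N(E_{n-1}(x))$. As a preliminary reduction, since $g$ has finite total variation on $\R$ it is bounded and admits limits at $\pm\infty$; writing its Jordan decomposition $g=g_1-g_2$ with $g_1,g_2$ bounded and nondecreasing and $\operatorname{Var}_\R(g_1)+\operatorname{Var}_\R(g_2)=\operatorname{Var}_\R(g)$, and noting that the quantity $\int_\R g\,dN-\frac1n\sum_k g(E_k(x))$ is linear in $g$, the claim reduces to the case of a bounded nondecreasing $g$.

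So assume $g$ is bounded and nondecreasing. Introduce the two quantile-type inverses of $N$ on $[0,1]$,
\[
N^{-1}_-(t):=\inf\{E\colon N(E)\ge t\},\qquad N^{-1}_+(t):=\sup\{E\colon N(E)\le t\}\,,
\]
both nondecreasing, agreeing off the at most countable set of levels of the flat intervals of $N$, and each pushing Lebesgue measure on $[0,1]$ forward to $dN$. Consequently $g\circ N^{-1}_\pm$ is bounded and nondecreasing on $[0,1]$, hence of bounded variation with $\|g\circ N^{-1}_\pm\|_{\mathrm{BV}([0,1])}\le\|g\|_{\mathrm{BV}(\R)}$, and the change of variables gives $\int_0^1 (g\circ N^{-1}_\pm)(t)\,dt=\int_\R g\,dN$. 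Applying \eqref{eq_koksma} to $g\circ N^{-1}_\pm$ with the sampling points $N(E_k(x))$, together with \eqref{eq_discrepancy_ids}, yields
\[
\l|\int_\R g\,dN-\frac1n\sum_{k=0}^{n-1}(g\circ N^{-1}_\pm)\bigl(N(E_k(x))\bigr)\r|\le \|g\|_{\mathrm{BV}(\R)}\l(\frac{4|r|}{sq}+\frac{40}{q}\r)\,.
\]
Since $N^{-1}_-(N(E))\le E\le N^{-1}_+(N(E))$ for all $E\in\R$ and $g$ is nondecreasing, one has termwise $(g\circ N^{-1}_-)(N(E_k(x)))\le g(E_k(x))\le (g\circ N^{-1}_+)(N(E_k(x)))$, so $\frac1n\sum_k g(E_k(x))$ is squeezed between the two averages in the last display; as both are within $\|g\|_{\mathrm{BV}(\R)}(\frac{4|r|}{sq}+\frac{40}{q})$ of $\int_\R g\,dN$, so is $\frac1n\sum_k g(E_k(x))$. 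Adding the contributions of $g_1$ and $g_2$ (and using additivity of the variation in the Jordan decomposition) finishes the proof.

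The only genuine subtlety is that $N$ need not be strictly increasing: it is merely Lipschitz, and although $dN$ is absolutely continuous with density at most $\gamma^{-1}$, it may still vanish on spectral gaps, so a single global inverse need not satisfy $N^{-1}(N(E_k(x)))=E_k(x)$ when the box eigenvalue $E_k(x)$ lies in, or at the edge of, such a gap. The two one-sided inverses and the monotone sandwich above are precisely what is needed to bypass this, at no cost in the constant; everything else is the standard quantile change of variables combined with the already established estimate \eqref{eq_discrepancy_ids}, and I anticipate no further obstacle.
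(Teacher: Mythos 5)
Your argument is correct and follows essentially the same route as the paper: transport everything to $[0,1]$ through an inverse of $N$, apply Koksma's inequality \eqref{eq_koksma} with the discrepancy estimate \eqref{eq_discrepancy_ids}. Where you go further than the paper is in dealing honestly with the possibility that a box eigenvalue $E_k(x)$ lies strictly inside a spectral gap of $H$, in which case $N(E_k(x))$ is exactly one of the ``bad'' levels and the identity $g(N^{-1}(N(E_k(x))))=g(E_k(x))$ used implicitly in the paper's one-line proof can fail; the paper's remark that the countable bad set does not affect the \emph{integral} does not address the finitely many \emph{sample} points. Your Jordan decomposition plus two-sided inverse sandwich closes this gap cleanly, at no cost in the constant, since $\operatorname{Var}(g_1)+\operatorname{Var}(g_2)=\operatorname{Var}(g)$ and $g\circ N^{-1}_\pm$ is monotone with variation bounded by that of $g$.
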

\begin{proof}
Follows from the relation
$$
\int_{\R} g(E')dN(E')=\int_0^1 g(N^{-1}(s))\,ds
$$
and Koksma's inequality \eqref{eq_koksma} applied to the function $x\mapsto g(N^{-1}(x))$, in view of \eqref{eq_discrepancy_ids}. Note that $N^{-1}(x)$ is well defined except maybe for a countable subset of values of $x$ which do not affect the integral.
\end{proof}
\section{Large deviations for box eigenvalues}
In the notation of the previous section, let
$$
P_n(x,E):=\det(H_n(x)-E)=\prod_{k=0}^n (E_k(x)-E)=\prod_{k=0}^n (\Lambda_k(x-\beta_{n-k})-E).
$$
The analysis of these determinants is crucial for non-perturbative one-dimensional localization problems, both for monotone potentials \cite{JK,Kach} and in general \cite{Lana94,Lana99,JHY}. We have
$$
\frac{1}{n}\log|P_n(x,E)|=\frac{1}{n}\l(\log|E-E_0(x)|+\ldots+\log|E-E_{n-1}(x)|\r).
$$
It would be natural to apply Corollary \ref{cor_koksma_ids} to the function $E'\mapsto\log|E-E'|$, and obtain, in view of the Thouless formula,
\beq
\label{eq_no_ldt}
\frac{1}{n}\log|P_n(x,E)|=\int_{\R} \log|E-E'|\,dN(E')+o(1)=L(E)+o(1),\quad n\to +\infty,
\eeq
after a suitable choice of $r,s,q$ (which is always possible). However, the above function is not of bounded variation. This leads to existence of some values of $x$ and $j$ for which \eqref{eq_no_ldt} is violated, because $\log|E-E_k(x)|$ is close to $+\infty$ or $-\infty$.

In order to quantify the above situation, let us choose some cut-off parameters 
$$
0<B_-<1<10<B_+<+\infty
$$ 
and define
\beq
\label{eq_def_p_minus}
P_n^{<B_-}(x,E):=\prod\limits_{k\colon |\Lambda_k(x-\beta_{n-k})-E|< B_-}(\Lambda_k(x-\beta_{n-k})-E),
\eeq
\beq
\label{eq_def_p_plus}
P_n^{>B_+}(x,E):=\prod\limits_{j\colon |\Lambda_k(x-\beta_{n-k})-E|> B_+}(\Lambda_k(x-\beta_{n-k})-E),
\eeq
\beq
\label{eq_def_p_mid}
P_n^{\mathrm{mid}}(x,E)(x,E):=\prod\limits_{k\colon B_-\le |\Lambda_k(x-\beta_{n-k})-E|\le B_+}(\Lambda_k(x-\beta_{n-k})-E).
\eeq
We have
\beq
\label{eq_pn_product}
P_n(x,E)=P_n^{<B-}(x,E)P_n^{\mathrm{mid}}(x,E)P_n^{>B+}(x,E).
\eeq
Our goal will be to show that $P_n^{\mathrm{mid}}(x,E)$, essentially, satisfies \eqref{eq_no_ldt}, and the possible deviations can only be caused by $P_n^{<B_-}$ being very small or $P_n^{>B_+}$ being very large. In both cases, we obtain two-sided bounds on $P_n^{<B_-}(x,E)$ and $P_n^{>B_+}(x,E)$ that allow us to control the corresponding exceptional sets.

\subsection{Very small eigenvalues} Let $\Lambda_k(\cdot-\beta_{n-k})$ be one of the monotone eigenvalue branches defined in Proposition \ref{prop_monotone_branches}, and $E\in \R$. Let 
\begin{multline}
\label{eq_intersection_point}
z_j=z_j(E):=\inf\{x\in(0,1)\colon \Lambda_k(x-\beta_{n-k})\ge E\}\\=\sup\{x\in(0,1)\colon \Lambda_k(x-\beta_{n-k})\le E\}\in [0,1],
\end{multline}
where we assume that $\inf$ over an empty set is $1$ and $\sup$ over an empty set is $0$. In other words, $z_k$ is the (unique) pre-image of $E$ under $\Lambda_k(\cdot-\beta_{n-k})$, or the closest possible point of $[0,1]$ to it (note that there will be no difference between $z_k=0$ and $z_k=1$). From $\gamma$-monotonicity of $\Lambda_k$, one can see that
\beq
\label{eq_distances_lowerbound}
|\Lambda_k(x-\beta_{n-k})-E|\ge \gamma|\{x-\beta_{n-k}\}-z_k+\beta_{n-k}|\ge \gamma\|x-z_k\|=\gamma \dist(x-z_k,\Z).
\eeq
Using the definition of $P_n^-(x,E)$ \eqref{eq_def_p_minus}, we have 
\beq
\label{eq_p_minus_twosided}
1\ge |P_n^{<B_-}(x,E)|=\prod\limits_{k\colon |\Lambda_k(x-\beta_{n-k})-E|< B_- }\left\{(\Lambda_k(x-\beta_{n-k})-E)\right\}\ge  \prod_{k\colon  \gamma\|x-z_k\|<B_-} \gamma\|x-z_k\|.
\eeq
Note that the right hand side of the last inequality can have more factors than the left hand side. However, the choice of $B_-$ guarantees that none of the factors would exceed $1$, thus preserving the inequality.

The points $\{z_k(E)\}$ will be called {\it intersection points}. In order to obtain meaningful estimates of $P_n^{<B_-}$, one needs some information on their distribution.
\begin{lemma}
\label{lemma_quantity_intersections}
Let $n=sq+r$, where $q$ is a denominator of the continued fraction expansion of $\alpha$, and $0\le x<y<1$. Then
\begin{multline*}
N_n(y,E)-N_n(x,E)\le \\
\le\#\{\text{jump points between $x$ and $y$}\}-\#\{\text{intersection points between $x$ and $y$}\}.
\end{multline*}
As a consequence,
$$
\#\{\text{intersection points between $x$ and $y$}\}\le s\lceil q(y-x)+1\rceil+64s+16(s+|r|).
$$
In particular, the number of factors in the right hand side of $\eqref{eq_p_minus_twosided}$ satisfies
\beq
\label{eq_p_minus_factors}
\#\{j\in [0;n-1]\colon  \gamma\|x-z_k\|<B_-\}\le \frac{2s q B_-}{\gamma}+100(s+|r|)+20.
\eeq
\end{lemma}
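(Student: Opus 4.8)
The plan is to relate the increment of the counting function $N_n(\cdot,E)$ across $[x,y]$ to the jump points and intersection points by a careful bookkeeping of how each monotone branch $\Lambda_k(\cdot-\beta_{n-k})$ crosses the level $E$. First I would recall from Proposition \ref{prop_monotone_branches} that $\sigma(H_n(x))=\{\Lambda_k(x-\beta_{n-k})\colon k=0,\ldots,n-1\}$ and that each $\Lambda_k$ is genuinely $\gamma$-monotone (hence increasing) on all of $[0,1)$. Thus, as $x$ increases through a monotonicity interval, each branch $\Lambda_k(x-\beta_{n-k})$ is strictly increasing; it contributes to a \emph{decrease} of $N_n(\cdot,E)$ exactly when it crosses the level $E$ from below, i.e. precisely at its intersection point $z_k(E)$ lying in $(x,y)$. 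The only mechanism that can \emph{increase} $N_n(\cdot,E)$ is a negative jump discontinuity of a diagonal entry, which by \eqref{eq_rank_one} is a rank-one negative perturbation and so can push at most one eigenvalue below $E$ per jump point. Counting signs, the net change $N_n(y,E)-N_n(x,E)$ is bounded above by (number of jump points in $(x,y)$) minus (number of intersection points in $(x,y)$); this is the first displayed inequality. A small amount of care is needed at the endpoints and at coincidences of jump/intersection points, handled by the one-sided-limit conventions already set up around \eqref{eq_rank_one} and Proposition \ref{prop_monotone_branches}.

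Next I would extract the bound on the number of intersection points. Rearranging the first inequality gives
$$
\#\{\text{intersection points in }(x,y)\}\le \#\{\text{jump points in }(x,y)\}+N_n(x,E)-N_n(y,E).
$$
For $n=sq+r$, the jump points are $\{\{-j\alpha\}\colon j\in[0;n-1]\}$, which split into $s$ (shifted) copies of the $q$-point rotation trajectory plus $|r|$ extra points; by \eqref{eq_discrepancy_q} each block of $q$ points meets an interval of length $(y-x)$ in at most $q(y-x)+2$ points, more crudely at most $\lceil q(y-x)+1\rceil$ by the two-gap structure of Proposition \ref{prop_rotation_trajectory}, so the jump-point count is at most $s\lceil q(y-x)+1\rceil+|r|$. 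For the term $N_n(x,E)-N_n(y,E)$, I would invoke Corollary \ref{cor_counting}, specifically \eqref{eq_counting_q2}, which gives $|N_n(x,E)-N_n(y,E)|\le 64s+16(s+|r|)$. Combining yields the stated bound (absorbing the harmless $|r|$ from the jump-point count into the $16(s+|r|)$ term).

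Finally, \eqref{eq_p_minus_factors} follows by specializing: the factors on the right of \eqref{eq_p_minus_twosided} are those $z_k$ with $\|x-z_k\|<B_-/\gamma$, i.e. the intersection points lying in an arc of length $2B_-/\gamma$ around $x$. Applying the just-proved bound with $y-x=2B_-/\gamma$ gives at most $s\lceil 2qB_-/\gamma+1\rceil+64s+16(s+|r|)\le \tfrac{2sqB_-}{\gamma}+100(s+|r|)+20$ after bounding the ceiling and collecting constants generously. I expect the main obstacle to be the sign-counting argument in the first step: one must correctly account for the interaction of jump points (which only \emph{create} sub-$E$ eigenvalues, one at a time) with the monotone upward crossings of the branches (which only \emph{destroy} them), and verify that no branch can cross $E$ more than once on $(0,1)$ — this is exactly where strict $\gamma$-monotonicity of the $\Lambda_k$ from Proposition \ref{prop_monotone_branches}, rather than mere monotonicity of the $E_k$ on each piece, is essential.
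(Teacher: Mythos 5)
Your proof follows exactly the same route as the paper's: the first inequality comes from the sign bookkeeping (each upward crossing of $E$ by a $\gamma$-monotone branch decreases $N_n$ by one, each negative jump point can increase it by at most one, both counted with multiplicity), the second from rearranging and invoking \eqref{eq_counting_q2} together with the three-gap/discrepancy structure of the $n=sq+r$ trajectory, and \eqref{eq_p_minus_factors} from specializing to the arc of length $2B_-/\gamma$ around $x$. The only tiny nit is your per-block jump-point count — a $q$-term orbit with gaps $\ge\frac{1}{2q}$ can meet an arc of length $\delta$ in up to $2q\delta+1$ points, not $q\delta+2$, and $\lceil q\delta+1\rceil$ is a \emph{sharper} (not cruder) bound than $q\delta+2$ — but this factor of at most $2$ is harmlessly swallowed by the generous $100(s+|r|)+20$ slack in \eqref{eq_p_minus_factors}, exactly as you observe.
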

\begin{proof}
The counting function will decrease by $1$ if one passes an intersection point, and can increase at most by $1$ at each jump point, which implies the first claim.
The second claim then follows from Corollary \ref{cor_counting}. Note that the same point can be a jump point and/or an intersection point for multiple eigenvalue branches. If that happens, it needs to be counted multiple times. The last claim follows from the fact that each intersection point appearing in the right hand side of \eqref{eq_p_minus_twosided} is contained in a $B_-(n)/\gamma$-neighborhood of $x$ (modulo $1$).
\end{proof}
\subsection{Very large eigenvalues}Recall that, by the usual perturbation theory, we have
$$
|\lambda_k(H_n(x))-\lambda_k(\diag\{f(x+k\omega):k\in [0;n-1]\})|\le \|\Delta\|\le 2,
$$
where $\lambda_k$ denotes $k$th largest eigenvalue counting multiplicity. As a consequence, we have the following two-sided bound:
\begin{multline}
\label{eq_p_plus_twosided_twosided}
\prod_{k:|f(x+k\alpha)-E|>B_++2}\l(|f(x+k\alpha)-E|-2\r)\le 
P_{n}^{>B_+}(x,E)\le \\ \le \prod_{k:|f(x+k\alpha)-E|>B_+-2}\l(|f(x+k\alpha)-E|+2\r).
\end{multline}
Moreover, the same inequalities also hold for the numbers of factors in all three expressions. Let us estimate the number of factors in the right hand side of \eqref{eq_p_minus_twosided} for $n=rq+s$, where $q$ is a denominator of the continued fraction expansion of $\alpha$. Using \eqref{eq_discrepancy4}, Koksma's inequality, and Markov's inequality, we have
\begin{multline}
\label{eq_p_plus_factors}
\#\{k:|f(x+k\alpha)-E|>B_+-2\}\\ \le 4(s+|r|)+n|\{x\in [0,1)\colon \log\l(|f(x)-E|+2\r)>\log B_+\}|\\
\le 4(s+|r|)+\frac{n}{\log B_+}\int_0^1\log\l(|f(x)-E|+2\r)\,dx.
\end{multline}
\subsection{The remaining eigenvalues and the large deviation theorem} Define the following truncated absolute value function:
$$
|E-E'|_{[B_-,B_+]}:=\begin{cases}
1,&|E-E'|< B_-;\\
|E-E'|,&B_-\le |E-E'|\le B_+;\\
1,&|E-E'|> B_+.
\end{cases}
$$
Clearly, the corresponding truncated version of the logarithm $E'\mapsto \log\l(|E-E'|_{[B_1,B_2]}\r)$ is of bounded variation:
\beq
\label{eq_g_variation}
\|\log |E-\cdot|_{[B_-,B_+]}\|_{\mathrm{BV}(\R)}=4\log (B_+/B_-).
\eeq
We have
$$
|P^{\mathrm{mid}}_n(x,E)|=\prod_{j=0}^{n-1}\l|E-E_k(x)\r|_{[B_-,B_+]}.
$$
From the analogue of Koksma's inequality stated in Corollary \ref{cor_koksma_ids}, we have that
\begin{multline}
\label{eq_q_lyapunov}
\l|\frac{1}{n}\log |P^{\mathrm{mid}}_n(x,E)|-\int \log |E-E'|_{[B_-,B_+]}dN(E')\r|\\ \le \l(\frac{8|r|}{sq}+\frac{128}{q}\r)\log (B_+/B_-).
\end{multline}
The estimates obtained above can be summarized in the following large deviation theorem.
\begin{theorem}
\label{th_ldt}
Let $n=sq+r$, $|r|<q$, where $q$ is a denominator of a continued fraction approximant of $\alpha$. Let $0<B_-<1<10<B_+<+\infty$. We have
$$
P_n(x,E)=P_n^{<B_-}(x,E)P_n^{\mathrm{mid}}(x,E)P_n^{>B_+}(x,E),
$$
where
\beq
\l|\frac{1}{n}\log P_n^{\mathrm{mid}}(x,E)-\int_{\R} \log |E-E'|_{[B_-,B_+]}dN(E')\r|\\ \le 300\frac{s+|r|}{n}\log(B_+/B_-),
\eeq
and $P_n^{+}(x,E)$ and $P_n^{-}(x,E)$ satisfy the two-sided bounds \eqref{eq_p_minus_twosided}, \eqref{eq_p_plus_twosided_twosided} with the numbers of factors bounded by \eqref{eq_p_minus_factors}, \eqref{eq_p_plus_factors}, respectively. The estimates are uniform on any compact energy interval.
 \end{theorem}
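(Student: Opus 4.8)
The statement is essentially a bookkeeping synthesis of the three subsections that precede it, so the plan is to assemble the pieces in order. First I would recall the factorization $P_n(x,E)=P_n^{<B_-}(x,E)P_n^{\mathrm{mid}}(x,E)P_n^{>B_+}(x,E)$, which is \eqref{eq_pn_product}, obtained simply by partitioning the index set $\{0,\dots,n-1\}$ according to whether $|\Lambda_k(x-\beta_{n-k})-E|$ is $<B_-$, in $[B_-,B_+]$, or $>B_+$; this partition is exhaustive and disjoint, so the product of the three sub-determinants recovers $P_n$. The claims about $P_n^{<B_-}$ and $P_n^{>B_+}$ are then literally the content of \eqref{eq_p_minus_twosided}, \eqref{eq_p_minus_factors}, \eqref{eq_p_plus_twosided_twosided} and \eqref{eq_p_plus_factors}, so for those two factors there is nothing to prove beyond citing Lemma \ref{lemma_quantity_intersections} and the perturbation bound $\|\Delta\|\le 2$.

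The only quantitative work is the middle factor. Here I would start from the identity $|P^{\mathrm{mid}}_n(x,E)|=\prod_{k=0}^{n-1}|E-E_k(x)|_{[B_-,B_+]}$, which holds because the truncated absolute value equals $1$ on exactly the indices omitted from $P^{\mathrm{mid}}_n$ and equals $|E-E_k(x)|$ on the indices kept. Taking logarithms turns this into $\tfrac1n\log|P^{\mathrm{mid}}_n(x,E)|=\tfrac1n\sum_{k=0}^{n-1}\log|E-E_k(x)|_{[B_-,B_+]}$, i.e. the average of the bounded-variation function $g(E')=\log|E-E'|_{[B_-,B_+]}$ over the eigenvalues $E_k(x)$ of $H_n(x)$. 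Now apply the Koksma-type inequality for the density of states, Corollary \ref{cor_koksma_ids}, with this $g$: the right-hand side is $\|g\|_{\mathrm{BV}(\R)}\bigl(\tfrac{4|r|}{sq}+\tfrac{40}{q}\bigr)$, and by \eqref{eq_g_variation} we have $\|g\|_{\mathrm{BV}(\R)}=4\log(B_+/B_-)$. This produces \eqref{eq_q_lyapunov}, and it remains only to bound the numerical factor: since $q\le n$ and $sq\ge n-|r|\ge n/2$ in the relevant range (or more crudely $\tfrac{4|r|}{sq}+\tfrac{40}{q}\le\tfrac{8|r|}{n}+\tfrac{40}{n}\cdot\tfrac{n}{q}\le\dots$), one checks that $4\bigl(\tfrac{4|r|}{sq}+\tfrac{40}{q}\bigr)\le 300\,\tfrac{s+|r|}{n}$; since $q$ is a continued-fraction denominator one has $q\ge 1$ and, more to the point, $\tfrac1q\le\tfrac{s}{n-|r|}\le\tfrac{2s}{n}$ once $n\ge 2q$, so $\tfrac{160}{q}\le\tfrac{320s}{n}$ and similarly the $|r|$ term is absorbed; the constant $300$ is deliberately non-sharp, so only a routine inequality is needed here.

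Finally, the uniformity statement on a compact energy interval: the bounds in \eqref{eq_p_minus_factors} and \eqref{eq_p_plus_factors} depend on $E$ only through the quantity $\int_0^1\log(|f(x)-E|+2)\,dx$, which is continuous and hence bounded on any compact $E$-interval by \eqref{eq_f3}, and the middle estimate \eqref{eq_q_lyapunov} has an $E$-independent right-hand side; so all error terms are uniform in $E$ on compacts. I do not anticipate a genuine obstacle: the theorem is a summary, and the one substantive point — that the truncated logarithm has finite total variation $4\log(B_+/B_-)$ independent of $E$ — is exactly why the cut-offs $B_\pm$ were introduced, and why \eqref{eq_g_variation} was recorded in advance. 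The mild care needed is only in choosing the convention $|r|<q$ so that $s\ge 1$ and the passage from $\tfrac1q,\tfrac1{sq}$ to $\tfrac{s+|r|}{n}$ is clean.
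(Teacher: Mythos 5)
Your proposal is correct and follows exactly the paper's route: the paper does not give a separate proof block for Theorem~\ref{th_ldt}, but assembles it from the factorization \eqref{eq_pn_product}, the two-sided bounds and factor counts of the preceding two subsections, and — for the middle factor — the Koksma-type Corollary~\ref{cor_koksma_ids} applied to the truncated logarithm with $\|g\|_{\mathrm{BV}}=4\log(B_+/B_-)$ from \eqref{eq_g_variation}, which is precisely your chain of reasoning. The only caveat is your final arithmetic: combining $4\log(B_+/B_-)$ with the Koksma factor $\tfrac{4|r|}{sq}+\tfrac{40}{q}$ gives $\tfrac{16|r|}{sq}+\tfrac{160}{q}$, and your own bound $\tfrac{160}{q}\le\tfrac{320s}{n}$ already exceeds $300\tfrac{s}{n}$; the paper's intermediate \eqref{eq_q_lyapunov} records the slightly smaller $\tfrac{8|r|}{sq}+\tfrac{128}{q}$, from which $300$ does follow, so either the paper tightens a constant silently or rounds — but as both you and the authors note, the specific constant is immaterial and this is not a gap in the argument.
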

\begin{remark}
\label{rem_large_deviation_name}
Usually, the name ``large deviation theorem'' is used for a result that describes the set of parameters where the value of a certain object (in our case, $P_n(x,E)$) is exponentially far away from it's expected value (in other words, \eqref{eq_no_ldt} is violated). The above results is an opposite statement for $P_n^{\mathrm{mid}}(x,E)$. However, its conclusion can also be interpreted as that all large deviation events are caused either by $P_n^{<B_-}$ being exponentially small, or $P_n^{>B_+}$ being exponentially large.
\end{remark}

It will be convenient to introduce
\beq
\label{eq_def_l_corr}
L_{\mathrm{corr}}(E,B_-,B_+):=\l|L(E)-\int_{\R} \log |E-E'|_{[B_-,B_+]}dN(E')\r|,
\eeq
that is, the difference between the Lyapunov exponent and the ``partial Lyapunov exponent'' appearing in Theorem \ref{th_ldt}. Clearly, from the Thouless formula, one has
$$
L_{\mathrm{corr}}(E,B_-,B_+)\to 0\quad \text{as}\quad B_-\to +0,\,B_+\to +\infty
$$
uniformly in $E$ on any compact energy interval. 

In order to obtain meaningful bounds in Theorem \ref{th_ldt}, it is not always beneficial to choose the largest possible value of $q$, since one may not have $r=o(n)$ if $s$ is too small (the worst possible case is, say, $n\sim 3q/2$). It will be convenient to fix a specific more balanced choice and restate Theorem \ref{th_ldt} in a form that demonstrates improvement of the bounds as $n\to +\infty$.
 \begin{cor}
 \label{cor_ldt_specific_s}
 Under the assumptions of Theorem $\ref{th_ldt}$, let $q(n)$ be the largest denominator among the continued fraction approximants of $\alpha$ with the property
\beq
\label{eq_q_n_def}
n=sq(n)+r(n),\quad 0\le |r(n)|\le \sqrt{n}
\eeq
and, as a consequence,
\beq
\label{eq_q_n_def_consequence}
\frac{s+|r|}{n}\le \frac{1}{q(n)}+\frac{|r(n)|}{n}\le \frac{1}{q(n)}+\frac{1}{\sqrt{n}}.
\eeq
Then, one has
$$
\l|\frac{1}{n}\log P_n^{\mathrm{mid}}(x,E)-L(E)\r|\\ \le \frac{1}{n}L_{\mathrm{corr}}(E,B_-,B_+)+300\l(\frac{1}{q(n)}+\frac{|r(n)|}{n}\r)\log(B_+/B_-).
$$
\end{cor}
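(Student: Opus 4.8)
The plan is to read the corollary off Theorem~\ref{th_ldt} almost verbatim, inserting the truncation correction $L_{\mathrm{corr}}$ from \eqref{eq_def_l_corr} and the arithmetic bound \eqref{eq_q_n_def_consequence}. First I would check that the choice \eqref{eq_q_n_def} of $q(n)$ is legitimate. The set of denominators $q$ of continued fraction approximants of $\alpha$ that admit a representation $n=sq+r$ with $s\ge 0$ and $|r|\le\sqrt n$ is nonempty, since $q_0=1$ works (with $s=n$, $r=0$); and for $n\ge 2$ it is finite, because $s=0$ forces $|r|=n>\sqrt n$, so $s\ge 1$ and hence $q\le n+|r|\le n+\sqrt n$, while $q_k\to\infty$. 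Therefore the largest admissible $q(n)$ exists, and for it we may always pick the representation so that $|r(n)|<q(n)$: if $q(n)\le\sqrt n$ use ordinary division $0\le r(n)<q(n)$, while if $q(n)>\sqrt n$ any admissible $r$ satisfies $|r|\le\sqrt n<q(n)$. Thus the hypotheses of Theorem~\ref{th_ldt} are met with $q=q(n)$.

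With that settled, the estimate follows from the triangle inequality
\[
\Bigl|\tfrac1n\log P_n^{\mathrm{mid}}(x,E)-L(E)\Bigr|\le\Bigl|\tfrac1n\log P_n^{\mathrm{mid}}(x,E)-\int_{\R}\log|E-E'|_{[B_-,B_+]}\,dN(E')\Bigr|+L_{\mathrm{corr}}(E,B_-,B_+),
\]
in which the second summand is precisely \eqref{eq_def_l_corr}. Theorem~\ref{th_ldt} bounds the first summand by $300\,\tfrac{s+|r|}{n}\log(B_+/B_-)$, and \eqref{eq_q_n_def_consequence} --- which is the elementary consequence of $s\,q(n)=n-r(n)$ and $q(n)\ge 1$ --- replaces $\tfrac{s+|r|}{n}$ by $\tfrac1{q(n)}+\tfrac{|r(n)|}{n}$; combining the two lines yields the asserted inequality. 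The stated uniformity on compact energy intervals is inherited from Theorem~\ref{th_ldt}, together with the fact that $L_{\mathrm{corr}}(E,B_-,B_+)\to 0$ uniformly on compacts as $B_-\to 0$, $B_+\to\infty$, which comes from the Thouless formula \eqref{thouless} and local integrability of $\log|E-\cdot|$ against $dN$.

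There is no substantive obstacle: all the work sits in Theorem~\ref{th_ldt}, and the corollary merely ties its free parameters $s,q$ to $n$ through the balanced choice \eqref{eq_q_n_def} so that the error is manifestly $o(1)$ as $n\to\infty$ --- the point of \eqref{eq_q_n_def} being that the largest available denominator would be a poor choice, since then $r$ need not be $o(n)$ (e.g.\ near $n\sim\tfrac32 q$). The only step worth a sentence of its own is the well-definedness of $q(n)$ and the verification that $|r(n)|<q(n)$ so that Theorem~\ref{th_ldt} applies; the remainder is the displayed triangle inequality and the one-line bound \eqref{eq_q_n_def_consequence}.
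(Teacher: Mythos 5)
Your approach --- apply Theorem~\ref{th_ldt}, add the correction $L_{\mathrm{corr}}$ via the triangle inequality, then invoke \eqref{eq_q_n_def_consequence} --- is exactly what the paper (which omits any explicit proof) intends. Your verification that $q(n)$ exists and that the representation can be chosen with $|r(n)|<q(n)$, so Theorem~\ref{th_ldt} is applicable with $q=q(n)$, is sound and worth including.

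There is, however, a discrepancy you glossed over. Your displayed triangle inequality correctly produces the term $L_{\mathrm{corr}}(E,B_-,B_+)$ on the right-hand side, \emph{with no factor of} $\frac{1}{n}$. The corollary as printed has $\frac{1}{n}L_{\mathrm{corr}}(E,B_-,B_+)$, which is a strictly tighter bound for $n>1$ and does not follow from your argument; $L_{\mathrm{corr}}$ has no $n$-dependence, so no such factor can appear. You then declare that your two lines "yield the asserted inequality," which is simply not accurate for the inequality as printed. Almost certainly the $\frac{1}{n}$ is a typo in the paper --- the subsequent use of this corollary, e.g.\ in the expression for $R(n)$ in Theorem~\ref{th_main_cramer} where $L_{\mathrm{corr}}$ enters $R(n)$ without a $\frac{1}{n}$ and is then multiplied by $n$ in $e^{nR(n)}$, is consistent only with the bound $L_{\mathrm{corr}}$ --- but you should say so explicitly rather than claim to have derived the printed statement.

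A secondary, smaller point: you call \eqref{eq_q_n_def_consequence} an "elementary consequence of $sq(n)=n-r(n)$ and $q(n)\ge 1$," but the first inequality $\frac{s+|r|}{n}\le\frac{1}{q(n)}+\frac{|r(n)|}{n}$ fails as an algebraic identity when $r(n)<0$: then $sq(n)=n+|r(n)|$ gives $\frac{s}{n}=\frac{1}{q(n)}+\frac{|r(n)|}{q(n)\,n}$, so there is an extra $\frac{|r(n)|}{q(n)\,n}$ term. Since $q(n)\ge 1$ this at worst doubles the $\frac{|r(n)|}{n}$ contribution and is harmless given the generous constant $300$, but the claim as you phrase it is not literally true. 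This imprecision is inherited from the paper's own "as a consequence," and is worth a footnote, not a rewrite.
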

Finally, it will be useful to have a uniform upper bound on the partial determinants that include $P_n^{<B_-}(x,E)$, which follows immediately from above combined with \eqref{eq_p_minus_twosided}. A similar bound also follows from a uniform bound on transfer matrices for rough cocycles obtained in \cite{LanaMavi}, as previously used in \cite{JK,Kach}. Here, the bound is more precise and explicit.
\begin{cor}
\label{cor_uniform_upper}
In the notation of Corollary $\ref{cor_ldt_specific_s}$, one has
\begin{multline}
\label{eq_uniform_upper}
|P_n^{<B_-}(x,E)P_n^{\mathrm{mid}}(x,E)|\le \\ \le \exp\l\{n L(E)\l(1+L_{\mathrm{corr}}(E,B_-,B_+)+300\l(\frac{1}{q(n)}+\frac{|r(n)|}{n}\r) \log(B_+/B_-)\r)\r\},
\end{multline}
where the estimate is uniform on any compact energy interval.
\end{cor}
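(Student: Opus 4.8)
The plan is to obtain \eqref{eq_uniform_upper} by combining the elementary a priori bound $|P_n^{<B_-}(x,E)|\le 1$ with the one-sided half of Corollary \ref{cor_ldt_specific_s}. For the first ingredient I would simply observe that the leftmost inequality in \eqref{eq_p_minus_twosided} gives $|P_n^{<B_-}(x,E)|\le 1$ for all $x$ and $E$: by the definition \eqref{eq_def_p_minus}, every factor of $P_n^{<B_-}$ has modulus $|\Lambda_k(x-\beta_{n-k})-E|<B_-<1$, so the product has modulus at most $1$ (and is $0$, making the claim trivial, if some factor vanishes). Hence
\[
\log\bigl|P_n^{<B_-}(x,E)\,P_n^{\mathrm{mid}}(x,E)\bigr|\le \log\bigl|P_n^{\mathrm{mid}}(x,E)\bigr|.
\]

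Next I would apply Corollary \ref{cor_ldt_specific_s} (reading $\log P_n^{\mathrm{mid}}$ as $\log|P_n^{\mathrm{mid}}|$, as throughout the paper), which bounds $\frac{1}{n}\log|P_n^{\mathrm{mid}}(x,E)|$ from above by $L(E)+\frac{1}{n}L_{\mathrm{corr}}(E,B_-,B_+)+300\bigl(\frac{1}{q(n)}+\frac{|r(n)|}{n}\bigr)\log(B_+/B_-)$. Multiplying by $n$, exponentiating, and combining with the previous display yields the additive form of \eqref{eq_uniform_upper}:
\[
\bigl|P_n^{<B_-}(x,E)\,P_n^{\mathrm{mid}}(x,E)\bigr|\le \exp\Bigl\{nL(E)+L_{\mathrm{corr}}(E,B_-,B_+)+300\,n\bigl(\tfrac{1}{q(n)}+\tfrac{|r(n)|}{n}\bigr)\log(B_+/B_-)\Bigr\}.
\]
The estimate \eqref{eq_uniform_upper} as stated is this same bound rewritten with the two error terms absorbed into the factor $L(E)\bigl(1+\cdots\bigr)$. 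On a fixed compact energy interval $I$ this rewriting is legitimate: $L$ is continuous, hence bounded, on $I$, and, since this corollary is only ever invoked in the regime of positive Lyapunov exponent, $I$ can be taken inside the positivity set of $L$, so $L$ is bounded below on $I$ as well, at the inessential cost of enlarging the numerical constant. Uniformity in $E\in I$ comes for free: $L_{\mathrm{corr}}(E,B_-,B_+)$ is uniformly bounded on $I$ by the remark following \eqref{eq_def_l_corr}, $L$ is bounded on $I$, and the remaining error does not depend on $E$.

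I expect no genuine obstacle here — all of the analytic work is already packaged in Theorem \ref{th_ldt} and Corollary \ref{cor_ldt_specific_s}, and the only point requiring a moment's care is the cosmetic passage from the additive to the multiplicative form of the error, which must be performed on a compact energy interval on which $L$ is bounded below.
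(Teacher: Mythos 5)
Your proof is essentially the paper's own: the text states that the corollary ``follows immediately from above combined with \eqref{eq_p_minus_twosided},'' and you supply exactly that reasoning, namely $|P_n^{<B_-}|\le 1$ from the leftmost inequality in \eqref{eq_p_minus_twosided} together with the one-sided LDT bound on $\log|P_n^{\mathrm{mid}}|$ from Corollary \ref{cor_ldt_specific_s}. One remark: your additive bound
\[
\log\bigl|P_n^{<B_-}P_n^{\mathrm{mid}}\bigr|\le nL(E)+L_{\mathrm{corr}}(E,B_-,B_+)+300\,n\Bigl(\tfrac{1}{q(n)}+\tfrac{|r(n)|}{n}\Bigr)\log(B_+/B_-)
\]
is the honest output of the argument, and you are right that the multiplicative form \eqref{eq_uniform_upper} dominates it only if $L(E)\ge 1$; when $0<L(E)<1$ the factor $L(E)$ makes the displayed right-hand side smaller than the additive one, so \eqref{eq_uniform_upper} is not a mere rewriting but requires enlarging the constant $300$ by $L(E)^{-1}$ (bounded on a compact interval inside the positivity set of $L$). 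That caveat is real; the paper does not comment on it, but it is harmless for the way the corollary is used since all applications are in the regime $L(E)>\beta(\alpha)\ge 0$ on compacts with $L$ bounded below, and only the qualitative shape of the error matters.
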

\begin{remark}
If $f$ is bounded, one can always choose $B_+\ge \max_x|f(x)|+2$ which will make $P_n^+(x,E)=1$, thus obtaining a bound on the complete determinant. In general, as in \cite{Kach}, a uniform upper bound is not possible and one has to treat $P_n^{>B_+}(x,E)$ separately.
\end{remark}
\section{Proof of localization: deterministic estimates and statements of the main lemmas}
%All proofs of this kind are based on covering the integer line $\Z$ by a collection of intervals of various sizes, such that the Green's functions of the restrictions of $H(x)$ into each interval satisfy a certain kind of off-diagonal decay (``good intervals''). In arithmetic localization for quasiperiodic operators, the location and size of these intervals strongly depends on the distribution of the continued fraction approximants to $\alpha$.

%The proof of Theorem \ref{th_main} will be based the Cramer's rule approach to estimating Green's functions, originally appeared in \cite{Lana94} and developed in \cite{Lana99} and later works. 

%The goal of this section is to summarize the necessary definitions (such as Green's functions and good intervals), establish some general lemmas that would allow to obtain large good intervals from smaller good intervals, declare the particular choice of good intervals depending on $\alpha$ and check that the said choice, indeed, implies the statement of Theorem \ref{th_main}. 

%Similarly to \cite{JL2,JL3,JHY}, the choice of intervals relies on distinguishing between resonant and non-resonant points. While the general motivation for this classification makes sense for several classes of quasiperiodic models, the actual arguments behind existence of these intervals heavily rely on large deviation theorems and therefore are much more model-specific. In the next section, we use Theorem \ref{th_ldt} to construct all intervals with the declared properties.
The proof of localization will be based on the Green's function method. The arguments of such kind involve covering the integer line $\Z$ by a collection of intervals of various sizes, such that the Green's functions of the restrictions of $H(x)$ into each interval satisfy a certain kind of off-diagonal decay (``good intervals''). The particular version of this argument, originally appeared in \cite{Lana94} and developed in \cite{Lana99} and later works, uses Cramer's rule and large deviations theorems in order to find a good interval among translations of an interval of a fixed length containing a given point.

In arithmetic localization results for quasiperiodic operators with $\beta(\alpha)>0$, the choice of good intervals is usually based on distinguishing between resonant and non-resonant points of $\Z$, see \cite{JL2,JL3,JHY}. Once the good intervals are found, the rest of the proof is based on a somewhat standard ``patching argument'', essentially relating the eigenfunction decay on smaller scales to that on larger scales (either by using the resolvent identity, or by iterating the Poisson formula).

In this section, we prove Theorem \ref{th_main} modulo several
Lemmas. in Subsection \ref{subsection_patching} we gather the arguments of a more general kind that
do not directly involve quasiperiodic structure of the
operator. Specifically, we
define Green's functions, good intervals, and regular points, as well
as establish two convenient ``patching lemmas'' that should be useful to
significantly stremline the treatment of resonances also in other
situations.

In Subsection
\ref{subsection_lemmas}, we state the main lemmas that will be used in
the proof of the main result. Each lemma will, essentially, state that
a certain class of points, depending on $\alpha$, has certain degree
of regularity. Those Lemma will be proved in Section \ref{proofs}. In Subsection \ref{subsection_proof_main}, we establish
the main result, Theorem \ref{th_main}, by checking that good
intervals provided by the above lemmas can be patched together, using
the considerations from Subsection \ref{subsection_patching}. In
Subsection \ref{subsection_improvements}, we discuss some improvements
in the exponential decay of eigenfunctions which bring the bounds
closer to those known for some specific models \cite{JL3,JHY}. 

\subsection{Green's functions, good intervals, and paths of minimum weight}
\label{subsection_patching}
We will start from some general definitions and lemmas, with $H$ being a general self-adjoint one-dimensional Schr\"odinger operator on $\ell^2(\Z)$.

Fix $\sigma>0$. We will say that $m\in \Z$ is $(\mu,n)$-{\it regular} if there exists an integer interval $[n_1;n_2]$ with 
\beq
\label{eq_length_regular_interval}
m\in [n_1;n_2],\quad n_2=n_1+n-1,\quad |m-n_j|\ge \sigma n=\sigma|n_2-n_1+1|, ,\quad j=1,2,
\eeq
and
\beq
\label{eq_green_good1}
|(H_{[n_1,n_2]}-E)^{-1}(n_j,m)|\le e^{-\mu|m-n_j|},\quad j=1,2.
\eeq
The corresponding interval $[n_1;n_2]$ will be referred to as a {\it good interval around $m$}. In general, an interval $[n_1,n_2]$ is {\it good} if it satisfies \eqref{eq_green_good1}  for every $m$ within the range defined by \eqref{eq_length_regular_interval}.

In the proof of localization, one starts with a generalized eigenfunction $\psi=\{\psi(n)\}_{n\in \Z}$ satisfying the eigenvalue equation normalized as follows:
\beq
\label{eq_initial_condition}
H\psi=E\psi,\quad |\psi(0)|^2+|\psi(1)|^2=1,\quad |\psi(m)|\le C_{\psi}(1+|m|).
\eeq
The central estimate relating the decay of $\psi$ with off-diagonal decay of the Green's functions follows from the Poisson formula: if $H(x)\psi=E\psi$ and $m\in [m_1,m_2]$, then
\beq
\label{eq_poisson}
\psi(m)=-(H_{[m_1,m_2]}-E)^{-1}(m_1,m)\psi(m_1-1)-(H_{[m_1,m_2]}-E)^{-1}(m_2,m)\psi(m_2+1).
\eeq
It will be convenient to introduce a somewhat more flexible $\psi$-specific notion of regularity. We will say that $m\in\Z$ is {\it $(\psi,\mu,n)$-regular} if there exists an interval satisfying \eqref{eq_length_regular_interval} such that
\beq
\label{eq_regularity_psi}
|\psi(m)|\le e^{-\mu |m-n_1|}|\psi(n_1-1)|+e^{-\mu |m-n_2|}|\psi(n_2+1)|.
\eeq
The corresponding interval $[n_1,n_2]$ will be called a {\it $\psi$-good} interval (around $m$). Clearly, $(\mu,m)$-regularity implies $(\psi,\mu,m)$-regularity.

Let $\psi$ be a solution of the eigenvalue equation $H\psi=E\psi$, and $\mu,L>0$. We will say that a pair of integer intervals 
$$
[N_1,N_2]\subset [N_1^-,N_2^+],\quad  N_1^-<N_1<N_2<N_2^+
$$
{\it admits a $(\psi,\mu,L)$-good collection of intervals} if every point $m\in [N_1,N_2]$ is $(\psi,\mu_m,L_m)$-regular, and the corresponding $\psi$-good intervals are contained in $[N_1^-,N_2^+]$:
$$
m\in [n_1,n_2]\subset [N_1^-,N_2^+],\quad |n_2-n_1+1|=L_m,\quad |m-n_j|\ge \sigma|n_2-n_1|,
$$
where
\beq
\label{eq_good_lower_bound}
\mu_m\ge \mu>0,\quad L_m\ge L>0, \quad e^{\mu \sigma L}>2.
\eeq
\begin{remark}
The parameters $\mu,L$ serve as lower bounds on the actual values $L_m,\mu_m$. In general, larger exponents $\mu_m$ are always better. Larger lengths $L_m$ are better in most cases, unless there are specific requirements on the location of the endpoints of the interval where better bounds on $\psi(n_1-1)$ and $\psi(n_2+1)$ are available for applying \eqref{eq_poisson}.
\end{remark}

Let $m\in [N_1,N_2]$. A {\it terminating path} (associated to a $(\psi,\mu,L)$-good collection of intervals defined above) is a finite sequence of integer points $P=(m_0,m_1,m_2,\ldots,m_p)$ with the following properties:
\begin{enumerate}
	\item $n_0=m$.
	\item $m_{j+1}=n_1-1$ or $m_{j+1}=n_2+1$, where $[n_1,n_2]$ is a $\psi$-good interval around $m$.
	\item $m_p\in [N_1^-,N_2^+]\setminus[N_1,N_2]$
	\item $\{m_0,\ldots,m_{p-1}\}\subset [N_1,N_2]$.
\end{enumerate}
For $P=(m_0,m_1,m_2,\ldots,m_p)$, let also
$$
\ell(P):=p;\quad |P|:=\sum_{j=0}^{p-1} |m_{j+1}-m_j|;\quad w(P):=\sum_{j=0}^{p-1} \mu_j(|m_{j+1}-m_j|-1|.
$$
The quantity $w(P)$ will be referred to as the {\it weight} of the path $P$. The following two propositions are adaptations of some widely used results in Anderson localization and can be stated either in the form of iterating the Poisson's formula, or the resolvent identity. It is clear that either iteration leads to a formula expressing $\psi(m)$ through a summation over some collection of paths, where the length of a path represents the depth of iterations. Under some general assumptions, the following is true:
\begin{enumerate}
	\item If one considers paths of arbitrary length, it is sufficient to consider only terminating paths (Lemma \ref{lemma_terminating}).
	\item Modulo a correction that can be absorbed into $o(1)$ in the exponent, it is sufficient to consider the contribution of the terminating paths of smallest weight (Lemma \ref{lemma_dominating}).
\end{enumerate}
\begin{lemma}
\label{lemma_terminating}
Suppose that a pair $[N_1,N_2]\subset [N_1^-,N_2^+]$ admits a $(\psi,\mu,L)$-collection of intervals. Then
\beq
\label{eq_terminating}
|\psi(m)|\le \sum_{P}e^{-w(P)}|\psi(m_p)|,
\eeq
where the sum is considered over all terminating paths starting at $m$, and $m_p$ is the last point of the terminating path $P$ under consideration.
\end{lemma}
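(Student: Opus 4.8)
The plan is to iterate the Poisson formula \eqref{eq_poisson} starting from the point $m$ and unfolding the $\psi$-good intervals supplied by the collection, keeping track of the accumulated weight. Concretely, since $m\in[N_1,N_2]$ is $(\psi,\mu_m,L_m)$-regular, there is a $\psi$-good interval $[n_1,n_2]\ni m$ with $|n_2-n_1+1|=L_m$, and \eqref{eq_regularity_psi} gives
\[
|\psi(m)|\le e^{-\mu_m|m-n_1|}|\psi(n_1-1)|+e^{-\mu_m|m-n_2|}|\psi(n_2+1)|.
\]
Identifying $n_1-1$ and $n_2+1$ as the two possible choices of $m_1$ in a path $P=(m_0=m,m_1,\dots)$, and recalling the definition of the weight increment $\mu_j(|m_{j+1}-m_j|-1)$, this inequality reads $|\psi(m)|\le\sum_{m_1}e^{-w((m,m_1))}|\psi(m_1)|$ up to the harmless unit shift built into the weight (the exponent $\mu_m|m-n_j|$ dominates $\mu_m(|m_{j+1}-m_j|-1)=\mu_m(|m-n_j|)$ since $|m_1-m|=|n_j\mp1-m|=|m-n_j|+1$, so actually the weight uses $|m-n_j|$ exactly — I would state this bookkeeping identity once at the start). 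First I would fix this correspondence between one step of Poisson iteration and one edge of a path precisely.

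Next I would set up the induction. Define $S_k$ to be the set of paths $(m_0,\dots,m_k)$ of length exactly $k$ obeying properties (1),(2),(4) of the definition of terminating path but not yet (3) — i.e. all vertices up to $m_{k-1}$ lie in $[N_1,N_2]$ and $m_k$ is free. Partition $S_k$ according to whether the current endpoint $m_k$ lies inside $[N_1,N_2]$ (\emph{active} paths) or in $[N_1^-,N_2^+]\setminus[N_1,N_2]$ (\emph{terminated} paths — these are exactly the terminating paths of length $k$). The inductive claim is
\[
|\psi(m)|\le \sum_{P\in S_k}e^{-w(P)}|\psi(m_k)|
=\sum_{P\ \text{terminated},\ \ell(P)\le k}e^{-w(P)}|\psi(m_p)|+\sum_{P\in S_k\ \text{active}}e^{-w(P)}|\psi(m_k)|.
\]
The base case $k=0$ is trivial. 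For the inductive step, apply one Poisson iteration at each active endpoint $m_k\in[N_1,N_2]$ using its $\psi$-good interval: this replaces the term $e^{-w(P)}|\psi(m_k)|$ by $\sum_{m_{k+1}}e^{-w(P\frown m_{k+1})}|\psi(m_{k+1})|$, where the two children are the endpoints-plus-one of that interval, and $w(P\frown m_{k+1})=w(P)+\mu_{m_k}(|m_{k+1}-m_k|-1)$. Terminated paths are left untouched. This yields the claim for $k+1$. The key quantitative point that makes the iteration meaningful is the uniform bound $e^{\mu\sigma L}>2$ from \eqref{eq_good_lower_bound}: at each step $|m_{k+1}-m_k|\ge\sigma L_m\ge\sigma L$ and $\mu_{m_k}\ge\mu$, so each edge multiplies the running factor by at most $e^{-\mu(\sigma L-1)}$... more carefully, the sum over the two children at each step is bounded by $2e^{-\mu(\sigma L-1)}$ times the max, which I can arrange to be $<1$ (or at worst handle by noting the contribution of the active-path remainder term).

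Finally I would argue the active remainder vanishes as $k\to\infty$, so that in the limit only terminating paths survive and \eqref{eq_terminating} follows. There are two sub-points: (a) every active path must eventually terminate, because each edge has length $\ge\sigma L\ge1$ so after at most $(N_2^+-N_1^-+1)/(\sigma L)$... actually a path can wander, so instead I use (b) a size estimate: the number of active paths in $S_k$ is at most $2^k$, each carries weight $\ge k\mu(\sigma L-1)$, and $|\psi(m_k)|\le C_\psi(1+|m_k|)\le C_\psi(1+N_2^+-N_1^-)$ is bounded by \eqref{eq_initial_condition} since $m_k\in[N_1^-,N_2^+]$; hence the active remainder is $\le C_\psi(1+N_2^+-N_1^-)\,(2e^{-\mu(\sigma L-1)})^k\to0$ provided $2e^{-\mu(\sigma L-1)}<1$. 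If \eqref{eq_good_lower_bound} only gives $e^{\mu\sigma L}>2$ rather than this slightly stronger inequality, I would absorb the $+1$ by the standard trick of noting $|m_{k+1}-m_k|-1\ge\tfrac12|m_{k+1}-m_k|\ge\tfrac12\sigma L$ once $\sigma L\ge2$ (which one may assume, enlarging $L$), giving $2e^{-\mu\sigma L/2}<1$ from $e^{\mu\sigma L}>4$; the constant $2$ in \eqref{eq_good_lower_bound} is chosen precisely with such maneuvers in mind, so I expect the intended reading makes this immediate. The main obstacle is purely bookkeeping: getting the weight-shift identity and the active/terminated partition stated cleanly enough that the one-step iteration visibly preserves the inductive inequality; there is no analytic difficulty beyond the geometric-series bound on the remainder.
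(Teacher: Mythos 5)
Your proposal is correct and follows essentially the same route as the paper: iterate the regularity estimate \eqref{eq_regularity_psi}, split the expansion into terminated paths and still-active paths, and kill the active remainder by the geometric bound $M\,2^k e^{-\mu\sigma k L}\to 0$ coming from $e^{\mu\sigma L}>2$. One small note: the worry you raise near the end about $e^{-\mu(\sigma L-1)}$ versus $e^{-\mu\sigma L}$ is resolved by the very bookkeeping identity you state at the start — since $m_{j+1}=n_l\mp1$, one has $|m_{j+1}-m_j|-1=|m_j-n_l|\ge\sigma L_{m_j}\ge\sigma L$, so the one-step factor is exactly $e^{-\mu_{m_j}|m_j-n_l|}\le e^{-\mu\sigma L}$ and the hypothesis $e^{\mu\sigma L}>2$ closes the series with no need for the halving trick or enlarging $L$.
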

\begin{proof}
Regularity of $\psi$ at $m$ implies
$$
|\psi(m)|\le e^{-\mu_m|m-n_1|}|\psi(n_1-1)|+e^{-\mu_m|m-n_2|}|\psi(n_2+1)|.
$$
We will keep iterating the above estimate with $m$ replaced by $n_1-1$ and $n_2+1$, as long as the new points are inside $[N_1,N_2]$, and stop the iterations once they reach $[N_1^-,N_2^+]\setminus[N_1,N_2]$. After $k$ iterations, we will have the sum over all terminating path $P$ with $|P|\le k$, plus the sum over all non-terminating paths of length $k$. Since each exponential factor is at most $e^{-\mu\sigma L}$, the total contribution of the non-terminating paths of length $k$ can be bounded by
$$
M 2^k e^{-\mu \sigma kL},\quad \text{where}\,\,\,M=\max\{|\psi(m)|\colon m\in [N_1^-,N_2^+]\}.
$$ Since $e^{\mu\sigma L}>2$, the latter goes to zero as $k\to +\infty$, and the iterated estimate becomes \eqref{eq_terminating}.
\end{proof}
In the above notation, fix $m_0\in [N_1,N_2]$. Denote by $\mathcal P_+$ and $\mathcal P_-$ the set of all paths starting at $m_0$ and terminating on $(N_2,N_2^+]$ and $[N_1^-,N_1)$, respectively. Let
$$
a:=\frac{16(1+\mu)\log(\sigma L)}{\mu\sigma L}.
$$
\begin{lemma}
\label{lemma_dominating}
Under the assumptions of Lemma $\ref{lemma_terminating}$, we have
$$
\sum_{P\in \mathcal P_{\eta}}\exp\l(-w(P)\r)\le \exp\l(-(1-a)\min\{w(P)\colon P\in \mathcal P_{\eta}\}\r),\quad \eta\in \{+,-\}.
$$
\end{lemma}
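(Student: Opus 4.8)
The plan is to bound the sum $\sum_{P\in\mathcal P_\eta}e^{-w(P)}$ by separating the single minimal-weight term from the rest, and showing the rest is dominated by the combinatorial count of paths times their exponential suppression. Write $w_{\min}:=\min\{w(P)\colon P\in\mathcal P_\eta\}$. The key structural observation is that every step of a path decreases the weight budget by at least $\mu\sigma L$ (since each exponential factor in \eqref{eq_regularity_psi} is at most $e^{-\mu_m\sigma L_m}\le e^{-\mu\sigma L}$, and the ``$-1$'' in the definition of $w(P)$ is harmless because each jump has length $|m_{j+1}-m_j|\ge \sigma L+1$ up to the trivial shift), so a terminating path contributing a term of size comparable to $e^{-w_{\min}}$ cannot be too long. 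First I would record the elementary inequality $w(P)\ge \ell(P)\,\mu\sigma L$ (up to absorbing the $-1$), which gives $\ell(P)\le w(P)/(\mu\sigma L)$; applied to the minimal path this also bounds $\ell_{\min}\le w_{\min}/(\mu\sigma L)$.

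Next I would count paths. At each point $m$ along a path there are at most two choices for the next point ($n_1-1$ or $n_2+1$ for the $\psi$-good interval around $m$), so the number of terminating paths of combinatorial length exactly $p$ is at most $2^p$. Splitting the sum according to $\ell(P)=p$, and using that any path with $\ell(P)=p$ has $w(P)\ge \max\{w_{\min},\,p\,\mu\sigma L\}\ge \tfrac12 w_{\min}+\tfrac12 p\,\mu\sigma L$, gives
\beq
\label{eq_dominating_sum}
\sum_{P\in\mathcal P_\eta}e^{-w(P)}\le \sum_{p\ge 1}2^p e^{-w(P)\text{ for that }p}\le e^{-w_{\min}/2}\sum_{p\ge 1}\l(2e^{-\mu\sigma L/2}\r)^p,
\eeq
which is wasteful; instead I would keep the full $w_{\min}$ in the bound for small $p$ and switch to the $p\,\mu\sigma L$ bound only once $p$ exceeds a threshold. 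Concretely, set $p_0:=\lceil w_{\min}/(\mu\sigma L)\rceil$ (so every terminating path of length $>p_0$ automatically has weight $>w_{\min}$, in fact weight $\ge p\mu\sigma L$), and write the sum as the contribution of paths with $p\le p_0$ plus the tail $p>p_0$. The tail is geometric: it is at most $\sum_{p>p_0}2^p e^{-p\mu\sigma L}\le 2\cdot(2e^{-\mu\sigma L})^{p_0+1}\le e^{-w_{\min}}\cdot(\text{small})$ using $e^{\mu\sigma L}>2$ generously (in fact $\mu\sigma L$ is large in applications, though the statement only assumes $e^{\mu\sigma L}>2$ — one may need to first note $\sigma L$ is large, which holds since $L$ will be taken large in the applications, or absorb a universal constant). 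The head contributes at most $2^{p_0}e^{-w_{\min}}=e^{-w_{\min}+p_0\log 2}$, and since $p_0\le w_{\min}/(\mu\sigma L)+1$ this is $\le e^{-w_{\min}(1-\log 2/(\mu\sigma L))}\cdot 2$; the constant $2$ and the additive slack are absorbed into the factor $a$, whose precise form $a=16(1+\mu)\log(\sigma L)/(\mu\sigma L)$ is chosen exactly to dominate $p_0\log 2/w_{\min}$ together with the geometric-tail losses and the logarithmic correction needed to compare $e^{-w_{\min}(1-a)}$ against the $2^{p_0}$ head count. Thus $\sum_{P\in\mathcal P_\eta}e^{-w(P)}\le e^{-(1-a)w_{\min}}$ as claimed.

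The main obstacle I anticipate is bookkeeping the constants so that the specific exponent $a$ comes out, rather than just ``some $a\to 0$''; in particular one must be careful that $w_{\min}$ could a priori be as small as $\mu\sigma L$ itself (a one-step path), in which case $p_0\log 2$ is genuinely comparable to $w_{\min}$ and the factor $(1-a)$ is doing real work — this is why $a$ carries the $\log(\sigma L)/(\mu\sigma L)$ shape. A secondary subtlety is that distinct terminating paths can pass through the same point (the good interval around $m$ is the fixed one from the good collection, so the ``tree of paths'' is genuinely a binary tree and the $2^p$ count is correct), and that one must verify the length lower bound $|m_{j+1}-m_j|\ge \sigma L_{m_j}\ge \sigma L$ to convert the weight-per-step bound $\mu_j(|m_{j+1}-m_j|-1)$ into something comparable to $\mu\sigma L$ — here using $\sigma L\ge 2$ (a consequence of $e^{\mu\sigma L}>2$ only if $\mu\le 1$, otherwise one needs the standing assumption that $L$ is large) to ensure $|m_{j+1}-m_j|-1\ge \tfrac12|m_{j+1}-m_j|\ge \tfrac12\sigma L$.
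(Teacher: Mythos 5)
Your proof is correct, and it takes a genuinely different --- and in fact simpler and sharper --- route than the paper. The paper counts paths by total displacement $|P|=N$: it observes there are at most $2^\ell\binom{N}{\ell}$ paths with $\ell(P)=\ell$ and $|P|=N$, applies a Stirling-type estimate $\binom{N}{\ell}\le e^{\ell}e^{\ell\log(N/\ell)}$ together with $\ell\le N/(\sigma L)$, then converts weight to displacement via $|P|\le 2w(P)/\mu$. This route is wasteful (it overcounts by allowing arbitrary displacements, not just those dictated by the fixed good collection) and introduces a $\log(\sigma L)$ factor, which is exactly why the paper's $a$ carries that logarithm. You instead count by combinatorial length $\ell(P)=p$: since each point has a designated good interval in the collection, the iteration produces a binary tree, so there are at most $2^p$ paths of length $p$, and every path satisfies $w(P)\ge p\mu\sigma L$ because $|m_{j+1}-m_j|-1=|m_j-n_i|\ge\sigma L_{m_j}\ge\sigma L$. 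Splitting at $p_0=\lceil w_{\min}/(\mu\sigma L)\rceil$ then gives a head $\lesssim 2^{p_0+1}e^{-w_{\min}}$ and a geometric tail (convergent since $e^{\mu\sigma L}>2$) of the same order, yielding a loss of size $O(1/(\mu\sigma L))$ without the $\log(\sigma L)$. This is a strictly stronger conclusion than the lemma asserts, so it certainly suffices.

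Two small remarks. First, your worry about needing $|m_{j+1}-m_j|-1\ge\tfrac12|m_{j+1}-m_j|$ is unnecessary: since $m_{j+1}\in\{n_1-1,n_2+1\}$, one has $|m_{j+1}-m_j|-1=|m_j-n_i|\ge\sigma L_{m_j}$ exactly, with no loss of a factor of two. Second, the paper's $a=16(1+\mu)\log(\sigma L)/(\mu\sigma L)$ was not ``chosen exactly to dominate $p_0\log 2/w_{\min}$'' as you speculate; it is tuned to the paper's displacement-based count and its Stirling correction. Your tighter count simply does not incur that logarithmic cost, and the resulting smaller exponent is absorbed into the stated $a$ a fortiori.
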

\begin{proof}Fix a point $m_0\in [N_1+,N_2^-]$, and consider all paths $P$ starting at $m_0$ with $\ell(P)=\ell$, $|P|=N$. Any such path is completely determined by the numbers $\{m_{j+1}-m_j\colon j=0,\ldots,\ell-1\}$. Since $\sum |m_{j+1}-m_j|=N$, there are at most $\binom{N}{\ell}$ to choose the absolute values $|m_{j+1}-m_j|$, and at most $2^{\ell}$ ways to choose the signs. As a consequence, there are at most $2^{\ell}\binom{N}{\ell}$ such paths. We have
$$
\binom{N}{\ell}\le \frac{N^{\ell}}{\ell!}\le e^{\ell}e^{\ell\log N-\ell\log \ell}=e^{\ell}e^{\ell\log (N/\ell)}.
$$
The function $t\mapsto e^{t\log N/t}$ is non-decreasing on $\l(0,\frac{N}{e}\r]$. Since $\ell\le \frac{N}{\sigma L}\ll \frac{N}{e}$, we have (after absorbing the factor $(2e)^{\ell}$ into an extra factor of $2$ in the exponent)
$$
2^{\ell}\binom{N}{\ell}\le e^{2N\frac{\log(\sigma L)}{\sigma L}}.
$$
By summing over all possible values of $\ell$, we have that
\beq
\label{eq_path_A}
\#\{P\colon |P|=N\}\le \sum_{\ell=0}^{\lceil\frac{N}{\sigma L}\rceil}2^{\ell}\binom{N}{\ell}\le  e^{3N\frac{\log(\sigma L)}{\sigma L}},\quad \#\{P\colon |P|\le N\}\le  e^{4N\frac{\log(\sigma L)}{\sigma L}}.
\eeq
Now, assume that $P$ is a path starting at $m_0$, terminating on $[N_1^-;N_1)$, and with $w(P)=k$. It is easy to see that one must have $|P|\le \frac{2 k}{\mu}$. Using \eqref{eq_path_A}, we can estimate the number of paths with $w(P)=k$ (in fact, with $w(P)\le k$) by
\begin{multline*}
\frac{2k}{\mu}\exp\l\{k\frac{8\log(\sigma L)}{\mu\sigma L}\r\}\le \exp\l\{k\left(\frac{8 \log(\sigma L)}{\mu\sigma L}+\frac{2\log k}{k}\right)\r\}\\ \le \exp\l\{k\frac{8(1+\mu)\log(\sigma L)}{\mu\sigma L}\r\}=e^{\frac{ka}{2}},	
\end{multline*}
where we assumed that $L$ is large enough depending on $\mu$ and $k\ge \sigma L$ (in other words, $P$ has at least one non-trivial segment).

We can now get back to the original estimate. Let
$$
k_{\pm}:=\min\{w(P)\colon P\in \mathcal P_{\pm}\}>0.
$$
Assume that $L$ is large enough so that $a\le 1/100$. Then
$$
\sum_{P\in \mathcal P{\pm}}\exp\{-w(P)\}\le \sum_{k=k_{\pm}}^{+\infty}e^{-k(1-a/2)}=e^{-k_{\pm}(1-a/2)}\frac{1}{1-e^{-(1-a/2)}}\le 2 e^{-k_{\pm}(1-a/2)}\le e^{-k_{\pm}(1-a)}.
$$
Here we used again the fact that $k_{\pm}\ge \sigma L$ and $L$ is large enough.
\end{proof}

\begin{remark}
\label{rem_same_exponent}
Under the assumptions of Lemmas \ref{lemma_terminating} and \ref{lemma_dominating}, let
$$
|\psi(N_1')|:=\max\{|\psi(m)|\colon m\in (N_2;N_2^+]\};
$$
$$
|\psi(N_2')|:=\max\{|\psi(m)|\colon m\in [N_1^-;N_1)\}.
$$
Let also
$$
w(P_1):=\min\{w(P)\colon P=(m_0,\ldots,m_p),\,m_0=m;\,\,m_p\in [N_1^-;N_1)\};
$$
$$
w(P_2):=\min\{w(P)\colon P=(m_0,\ldots,m_p),\,m_0=m;\,\,m_p\in (N_2;N_2^+]\}
$$
be the smallest possible weights of paths terminating on $[N_1^-;N_1)$, $(N_2;N_2^+]$, respectively. Then, by Lemmas \ref{lemma_terminating}, \ref{lemma_dominating},
\beq
\label{eq_poisson_applied}
|\psi(m)|\le e^{-w(P_1)(1-o(1))}|\psi(N_1')|+e^{-w(P_2)(1-o(1))}|\psi(N_2')|.
\eeq
In particular, one also has
\beq
\label{eq_poisson_applied2}
|\psi(m)|\le e^{-\mu(1-o(1))|m-N_1|}|\psi(N_1')|+e^{-\mu(1-o(1))|m-N_2|}|\psi(N_2')|.
\eeq
Here, $o(1)$ is as $L\to \infty$. Note that $w(P)$ can be slightly smaller than $\mu |P|$, due to the fact that there is a loss of $1$ in the path length each iteration. However, this loss can be absorbed into $o(1)$. Also, as stated, the paths $P_1$, $P_2$ do not necessarily terminate exactly at $N_1'$, $N_2'$.

It may not necessarily be possible to calculate exact values of $w(P_1)$ and $w(P_2)$, in which case one would have to replace them by some lower bounds. Note that a stronger lower bound on $w(P)$ yields a stronger upper bound on $|\psi(m)|$. An estimate \eqref{eq_poisson_applied2} is an example of such bound that is always available. In some cases, such as the case of the Liouville transition, the position and size of good intervals will allow us to obtain better bounds.
\end{remark}
The following is also a standard consequence of the Poisson's formula.
\begin{prop}
\label{prop_origin}
Suppose that $\psi$ is a generalized eigenfunction of $H$ satisfying $\eqref{eq_initial_condition}$. Let $\mu>0, \sigma\in (0,1/2)$. For $r\ge r_0=r_0(\sigma,C_{\psi},\mu)$, the origin $0\in \Z$ cannot be $(\psi,\mu,r)$-regular.
\end{prop}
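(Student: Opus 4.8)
The plan is to argue by contradiction using the Poisson formula \eqref{eq_poisson} together with the normalization \eqref{eq_initial_condition}. Suppose $0$ is $(\psi,\mu,r)$-regular, so there is an interval $[n_1,n_2]$ with $n_2 = n_1 + r - 1$, $0 \in [n_1,n_2]$, $|0-n_j| \ge \sigma r$ for $j=1,2$, and
\beq
\label{eq_origin_reg_applied}
|\psi(0)| \le e^{-\mu|0-n_1|}|\psi(n_1-1)| + e^{-\mu|0-n_2|}|\psi(n_2+1)|.
\eeq
First I would bound the right-hand side from above using the polynomial growth bound: $|\psi(n_1-1)| \le C_\psi(1+|n_1-1|)$ and similarly for $\psi(n_2+1)$. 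Since $|0-n_j|\ge \sigma r$ and both $n_1-1$ and $n_2+1$ lie within distance $r+1$ of the origin, we get $|\psi(n_j \mp 1)| \le C_\psi(2+r)$, while $e^{-\mu|0-n_j|} \le e^{-\mu\sigma r}$. Hence the right-hand side of \eqref{eq_origin_reg_applied} is at most $2C_\psi(2+r)e^{-\mu\sigma r}$.

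Next I would invoke the lower bound on $|\psi(0)|$ coming from the normalization. The condition $|\psi(0)|^2 + |\psi(1)|^2 = 1$ does not by itself force $|\psi(0)|$ to be bounded below, so I would instead apply the regularity estimate, or rather its one-step consequence, at both $0$ and $1$: if $0$ is $(\psi,\mu,r)$-regular one can run the same estimate, and one similarly controls $\psi(1)$ by noting that $1$ is covered by (a translate of) the same good interval, since $\sigma < 1/2$ guarantees $1 \in [n_1,n_2]$ with $|1-n_j| \ge \sigma r - 1$ as well for $r$ large. This yields $|\psi(1)| \le 2C_\psi(2+r) e^{-\mu(\sigma r - 1)}$. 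Combining, $1 = |\psi(0)|^2 + |\psi(1)|^2 \le 8 C_\psi^2 (2+r)^2 e^{-2\mu(\sigma r-1)}$, which is violated once $r$ exceeds some $r_0 = r_0(\sigma, C_\psi, \mu)$, since the right-hand side tends to $0$ as $r \to \infty$. This contradiction proves the claim.

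The main obstacle is the handling of $\psi(1)$: the normalization only gives a lower bound on the pair $(\psi(0),\psi(1))$, so one must ensure that regularity of $0$ also gives decay at $1$. The cleanest route is to observe that the defining interval for $(\psi,\mu,r)$-regularity of $0$ also serves $1$ (using $\sigma<1/2$ so that the centering constraints \eqref{eq_length_regular_interval} still hold at $1$ with only an $O(1)$ loss absorbed for large $r$), or alternatively to note that the Poisson formula \eqref{eq_poisson} applied on $[n_1,n_2]$ directly expresses $\psi(1)$ through the same boundary values; either way the extra additive constant $1$ in the exponent is harmless. The rest is the elementary observation that a polynomial times $e^{-c r}$ is eventually less than $1$.
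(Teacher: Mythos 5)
Your argument follows the standard route and correctly isolates the real difficulty: the normalization $|\psi(0)|^2+|\psi(1)|^2=1$ only bounds the pair from below, so a decay bound is needed at both $0$ and $1$. The gap is in how you obtain the bound at $1$. The $(\psi,\mu,r)$-regularity of $0$ is a purely pointwise condition: it asserts the existence of an interval $[n_1,n_2]$ for which \eqref{eq_regularity_psi} holds \emph{at} $m=0$. Nothing in the definition forces \eqref{eq_regularity_psi} to hold at $m=1$ with that same (or any) interval; the inequality is a property of $\psi$, not a geometric condition on the interval, so your first route does not close. The second route, evaluating the Poisson formula \eqref{eq_poisson} at $m=1$, does express $\psi(1)$ through $\psi(n_1-1)$ and $\psi(n_2+1)$, but the Green's function coefficients $(H_{[n_1,n_2]}-E)^{-1}(n_j,1)$ need not be small --- their smallness is precisely the stronger Green's function regularity of the point $1$, which is not given by the hypothesis.

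In fact, read literally for $(\psi,\mu,r)$-regularity, the statement cannot hold: if $\psi(0)=0$ (compatible with \eqref{eq_initial_condition}, take $\psi(1)=1$), then \eqref{eq_regularity_psi} holds trivially at $m=0$ for every interval and every $r$. The proposition must therefore be understood as a statement about the Green's function notion of regularity \eqref{eq_green_good1}, and that is how it is actually invoked in Section~\ref{proofs}: either the candidate interval is a genuine good interval, in which case \eqref{eq_green_good1} holds for all interior $m$ within the $\sigma$-margin (so it holds at $m=1$ up to a harmless $O(1)$ loss, absorbed for large $r$), or the regularity inequality at the origin is derived uniformly over a resonant block which contains $1$ as well. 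Under the Green's function hypothesis your computation closes cleanly and gives $1=|\psi(0)|^2+|\psi(1)|^2\le 8C_\psi^2(2+r)^2 e^{-2\mu(\sigma r-1)}$, which fails for $r\ge r_0(\sigma,C_\psi,\mu)$. The fix is to replace your two ``cleanest routes'' with an explicit appeal to the Green's function form of regularity, which is what the applications supply.
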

\subsection{Overview of the estimates in different regions of $\Z$. Main lemmas}
\label{subsection_lemmas}
In this subsection, we will state several main lemmas that will be used in the proof of the main Theorem \ref{th_main}. Each lemma will establish regularity of a certain class of points of $\Z$, with different exponents and length of the intervals. In the next subsection, we will apply Lemmas \ref{lemma_dominating} and \ref{lemma_terminating}, as well as Remark \ref{rem_same_exponent}, to show how localization follows from these lemmas. In the next section, we will prove these lemmas using the large deviation Theorem \ref{th_ldt}.

Let
$$
\frac{p_k}{q_k}\to \alpha,\quad k\to +\infty,
$$ 
be the sequence of continued fraction approximants to $\alpha$. Recall
$$
L(E)>\beta=\limsup\limits_{k\to +\infty}\frac{\log q_{k+1}}{q_k}.
$$
As a consequence, we can state
$$
q_{k+1}\le e^{(\beta+o(1))q_k}\le e^{(L(E)-o(1))q_k},\quad k\to +\infty,
$$
where $o(1)$ only depends on $\alpha$. 

Fix a large constant $C_d$ (say, $C_d=100$). As usual for the arithmetic localization method, the location, size, and exponents of the good intervals will largely dependent on whether $q_{k+1}\le q_k^{C_d}$ (Diophantine transition) or $q_{k+1}>q_k^{C_d}$ (Liouville transition). The fact that $\beta(\alpha)>0$ implies that there are infinitely many scales on which the latter case happens.

In the main lemmas, $o(1)$ is considered as $n\to +\infty$ (or, equivalently, $q_k\to +\infty$) for fixed $\sigma$, $\tau$, $C_{d}$, $\alpha$, $f$, and $E$, but the estimates can be made uniform in $E$ on any finite energy interval on which $L(E)-\beta(\alpha)$ is bounded from below by a positive constant.

The following proposition, is essentially, obtained in \cite{Kach} and partially in \cite{JK}, and is sufficient to establish localization in the Diophantine case. Similar arguments can also be extended for some situations with $\beta>0$ \cite{Zhu}. Even though this regime will be covered later by Lemmas \ref{lemma_diophantine_diophantine} and \ref{lemma_liouville_diophantine}, we include the proof mainly to introduce the reader, who may be already familiar with \cite{JK,Kach}, to the current notation.
\begin{prop}
\label{prop_main_basecase}
Assume that $q_{k+1}\le q_k^{C_d}$. Then, every point $m\in \Z$ with 
$$
|m|\in J=[\lfloor q_k/2\rfloor+1; q_{k+1}-\lfloor q_k/2\rfloor-1].
$$
is $(L(E)-o(1),q_k)$-regular.
\end{prop}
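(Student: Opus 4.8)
The plan is to produce, for each point $m$ with $|m|\in J$, a good interval of size exactly $q_k$ by translating the reference interval $[0;q_k-1]$ by an appropriate amount. Concretely, for $m$ in the prescribed range I want to find an integer $n_1$ with $m\in[n_1;n_1+q_k-1]$, $|m-n_1|\ge \sigma q_k$ and $|m-(n_1+q_k-1)|\ge\sigma q_k$, such that the Dirichlet restriction $H_{[n_1,n_1+q_k-1]}(x)=H_{q_k}(x+n_1\alpha)$ has Green's function with the desired off-diagonal decay $e^{-(L(E)-o(1))|m-n_j|}$. By Cramer's rule, the relevant matrix entries are ratios of determinants: $(H_{q_k}(y)-E)^{-1}(i,j)$ equals (up to sign) $P_{i}(y,E)\,P_{q_k-1-j}(y+j\alpha+\alpha,E)/P_{q_k}(y,E)$ for $i\le j$, where the $P$'s are determinants of sub-blocks, which are themselves determinants of operators of the same monotone type. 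So the estimate reduces to: (i) an \emph{upper} bound on the numerator determinants, and (ii) a \emph{lower} bound $|P_{q_k}(x+n_1\alpha,E)|\ge e^{(L(E)-o(1))q_k}$ for a suitable choice of $n_1$.

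For the upper bounds on the numerator, I would invoke Corollary~\ref{cor_uniform_upper}: since $i,q_k-1-j<q_k$, the sub-block determinants (restricted to the relevant energy, with cutoffs $B_\pm$ chosen so that $L_{\mathrm{corr}}$ and the $1/q(\cdot)$ terms are $o(1)$) are bounded above by $e^{(\text{length})(L(E)+o(1))}$ — here in the bounded case one takes $B_+$ large enough to kill $P^{>B_+}$, and in the unbounded case one handles $P^{>B_+}$ via \eqref{eq_p_plus_twosided_twosided}, which only inflates things by a subexponential factor once we also throw away finitely many resonant sites. The real content is step (ii): finding a translate on which the full determinant $P_{q_k}$ is not too small. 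This is where the hypothesis $q_{k+1}\le q_k^{C_d}$ (Diophantine transition) enters. The key facts are that the points $\{j\alpha\}$, $j=0,\dots,q_k-1$, are $\frac{2}{q_k}$-uniformly distributed (Proposition~\ref{prop_rotation_trajectory}), that $\frac{1}{q_k}\log|P_{q_k}^{\mathrm{mid}}(x,E)|$ is within $o(1)$ of $L(E)$ for \emph{all} $x$ by Theorem~\ref{th_ldt}, and that $|P_{q_k}^{<B_-}|$ can only be small at the few points $x$ that are within $B_-/\gamma$ of one of the $\le O(q_k B_-/\gamma)+O(1)$ intersection points $z_\ell(E)$ (Lemma~\ref{lemma_quantity_intersections}). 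Since $m$ ranges over an interval of length up to $q_{k+1}\le q_k^{C_d}$, the admissible translates $n_1$ (those with $m-n_1$ and $n_1+q_k-1-m$ both $\ge\sigma q_k$ but not too large, say $\le (1-\sigma)q_k$ in magnitude) form a set of $\gtrsim q_k$ consecutive integers; the corresponding phases $\{(x+n_1\alpha)\}$ hit all but a $\frac{4}{q_k}$-fraction of $[0,1)$, hence at least one of them avoids all the bad neighborhoods of intersection points, provided $B_-$ is chosen a small constant. On that translate, $|P_{q_k}^{<B_-}|\ge \gamma^{O(1)}$ and $|P_{q_k}^{>B_+}|$ is controlled, so $|P_{q_k}(x+n_1\alpha,E)|\ge e^{(L(E)-o(1))q_k}$.

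Combining (i) and (ii): for $|m|\le q_{k+1}-\lfloor q_k/2\rfloor-1$, I choose $n_1$ as above so that $m$ is roughly centered (say $|m-n_j|\ge \sigma q_k$ for both endpoints — this is possible since $\sigma<1/2$ and $q_k$ is large), the Cramer ratio for the $(n_1,m)$ and $(n_2,m)$ entries is bounded by $e^{(\text{length of numerator})(L(E)+o(1))}\,e^{-(L(E)-o(1))q_k}=e^{-(L(E)-o(1))|m-n_j|}$ after noticing that the numerator length equals $q_k-1-|m-n_j|$ (roughly), so the exponents subtract correctly. This gives $(L(E)-o(1),q_k)$-regularity of $m$, and the lower endpoint restriction $|m|\ge\lfloor q_k/2\rfloor+1$ guarantees that good intervals around $m$ can be chosen not containing $0$ — which is exactly what one needs to combine with Proposition~\ref{prop_origin} later. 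I expect the \textbf{main obstacle} to be the bookkeeping in step (ii): ensuring that, uniformly in $m$ over the \emph{full} Liouville-sized range $|m|\lesssim q_{k+1}$, there is always a translate $n_1$ landing in the good (non-resonant) part of $[0,1)$ while simultaneously keeping both endpoint distances $\ge\sigma q_k$; this is a counting argument that is easy when $q_{k+1}$ is not much larger than $q_k$, but in the Diophantine regime $q_{k+1}$ can be as large as $q_k^{C_d}$, so one must use that along the orbit $\{m\alpha\}$ the phase genuinely equidistributes over scales of length $q_k$, i.e.\ one only needs a \emph{window} of $\sim q_k$ consecutive translates, not the whole range. A secondary technicality is handling the unbounded case cleanly — excising the $O(1)$ sites per scale where $|f(x+j\alpha)-E|$ is huge and absorbing their contribution into $o(1)$ on the logarithmic scale.
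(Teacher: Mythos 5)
Your overall strategy — Cramer's rule, the large deviation estimate for $P^{\mathrm{mid}}$, Corollary~\ref{cor_uniform_upper} for the numerator, and a lower bound on the denominator obtained by choosing a translate avoiding the small-denominator set — is the right skeleton and matches the paper's structure. But the mechanism you propose for finding that translate has two genuine gaps, and the paper uses a different (and essential) device at exactly that point.

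\textbf{First gap: density is the wrong tool.} You argue that the $\sim q_k$ admissible phases $\{x+n_1\alpha\}$ ``hit all but a $\frac{4}{q_k}$-fraction of $[0,1)$'', so at least one avoids the bad neighborhoods of the intersection points. Discrepancy controls how many sampling points fall in any single interval, but it does not prevent \emph{all} of your translates from landing in the bad set, because the bad set is a union of up to $q_k$ \emph{separate} tiny intervals (one per intersection point, by \eqref{eq_intersection_points}), and each of them could trap one of your $\sim q_k$ translates. The correct mechanism is a pigeonhole with \emph{separation}: the Diophantine hypothesis $q_{k+1}\le q_k^{C_d}$ gives $\|(j_1-j_2)\alpha\|\ge \tfrac12 q_k^{-C_d}$ for distinct $j_1,j_2$ in the sampling window, and one then chooses $B_-$ so that each bad neighborhood is \emph{smaller} than this separation and hence traps at most one sampling point. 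This requires $B_-$ to be polynomially small in $q_k$ (the paper takes $B_-(n)=q(n)^{-2C_d+1}$ in \eqref{eq_choices_B}), not ``a small constant'' as you suggest; a constant $B_-$ leaves the separation argument dead on arrival for large $q_k$ in the Diophantine regime.

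\textbf{Second gap: not enough translates centered at $m$.} Even with the separation/pigeonhole in hand, you need strictly more sampling points than the number of intersection points, i.e.\ at least $q_k+1$ points. But the set of translates $n_1$ for which $m$ is $\sigma$-centered in $[n_1;n_1+q_k-1]$ has only $(1-2\sigma)q_k < q_k$ elements for any $\sigma>0$, so the pigeonhole never closes if you only sample near $m$ as you propose. The paper's crucial device is the set $J$ of \eqref{eq_definition_J_base}, which is the union of two blocks of translates — one keeping $0$ centered, one keeping $m$ centered — totaling $q_k+1$ points (this is precisely where $|m|\ge \lfloor q_k/2\rfloor+1$ is used: to make the two blocks disjoint, \emph{not} merely to ``avoid $0$'' as you write). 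The pigeonhole then gives some good $j\in J$. If $j$ were in the block near $0$, the resulting good interval would make $0$ regular, contradicting Proposition~\ref{prop_origin}; so $j$ is in the block near $m$, proving regularity of $m$. This two-block structure plus the exclusion by Proposition~\ref{prop_origin} is the heart of the argument and is missing from your proposal.
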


For the main cases, we will need to introduce additional notation. In all lemmas below, we will use a fixed small parameter 
$$
\tau=\tau(E,f,\beta)>0.
$$
Find the largest $q_{k-k_0}$ and then the largest possible $s$ such that
\beq
\label{eq_definition_s}
2s q_{k-k_0}\le \tau q_k.
\eeq
\begin{lemma}[Diophantine to Diophantine transition]
\label{lemma_diophantine_diophantine}
Assume that $q_{k+1}\le q_k^{C_d}$ and $s\le q_{k-k_0}^{C_d}$. Let $s':=\lfloor s/10\rfloor$. Then, every $m\in \Z$ with $100 \tau q_k<|m|<3 q_k/2$ is $(L(E)-o(1),2 s' q_{k-k_0}-1)$-regular.
\end{lemma}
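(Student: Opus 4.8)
The plan is to produce, for every $m$ with $100\tau q_k<|m|<3q_k/2$, a good interval of length $2s'q_{k-k_0}-1$ around $m$ by translating a fixed-size interval and using the large deviation Theorem \ref{th_ldt} (in the balanced form of Corollary \ref{cor_ldt_specific_s}) together with Cramer's rule. Concretely, write $(H_{[n_1,n_2]}-E)^{-1}(n_j,m)$ via Cramer's rule as a ratio of a principal-minor determinant $P$ of size $n-1$ (in fact, a product of two sub-block determinants separated at $m$) over the full determinant $P_n(x_{n_1},E)$, where $x_{n_1}=x+n_1\alpha$ and $n=2s'q_{k-k_0}-1$. The numerator is controlled from above by the uniform upper bound of Corollary \ref{cor_uniform_upper}, which gives $|P|\le \exp\{(|m-n_1|+|m-n_2|)(L(E)+o(1))\}$ on the relevant sub-blocks (after choosing $B_-$ small and $B_+$ large so that $L_{\mathrm{corr}}$ and the explicit error terms are $o(1)$ relative to $L(E)$; here we must also control the $P^{>B_+}$ factors, but since $f$ satisfies \eqref{eq_f3} the number of large factors is $o(n)$ by \eqref{eq_p_plus_factors}, and each is at most $|f(x+j\alpha)-E|+2$, whose logarithmic contribution averages to something finite). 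Therefore everything reduces to a \emph{lower} bound of the form $|P_n(x_{n_1},E)|\ge e^{n(L(E)-o(1))}$ for a well-chosen translate $x_{n_1}$ of the base point $x$.

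The lower bound on the full determinant is where the arithmetic input enters, and it is the main obstacle. The strategy, following the good-interval construction of \cite{JK,Kach}, is: among the $n+1$ translates $x, x+\alpha,\ldots,x+n\alpha$ (equivalently, the sampling points $\{x+j\alpha\}$ coming from the irrational rotation on a scale comparable to $q_{k-k_0}$), one of them must have $|P_n|$ not too small. The key tool is \eqref{eq_intersection_points}: for each fixed $E$ the equation $E\in\sigma(H_n(y))$ has at most $n$ solutions $y\in[0,1)$, so the ``bad'' set where $|P_n(y,E)|$ is small (equivalently where some $\Lambda_k(y-\beta_{n-k})$ is within $B_-$ of $E$) has total measure $O(nB_-/\gamma)$ by the $\gamma$-monotonicity of the branches. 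Since $s\le q_{k-k_0}^{C_d}$ and $q_{k+1}\le q_k^{C_d}$ (the Diophantine-to-Diophantine regime), the discrepancy estimate \eqref{eq_discrepancy4} for $n=2s'q_{k-k_0}-1$ shows the $n+1$ sampling points are $O((s'+1)/n)=o(1)$-equidistributed, so not all of them can lie in a set of measure $O(nB_-/\gamma)$ once $B_-$ is chosen small enough (but still a fixed constant, independent of $k$). For such a good translate, combining with the middle-factor estimate of Corollary \ref{cor_ldt_specific_s} gives $\frac1n\log|P_n^{\mathrm{mid}}|\ge L(E)-o(1)$, while $|P_n^{<B_-}|\ge(\gamma B_-)^{O(1)}=e^{o(n)}$ by the two-sided bound \eqref{eq_p_minus_twosided} with $O(1)$ factors (here $s'$ bounded is crucial — the number of small factors is $O(s'+|r'|)=O(s')$, and we need $s'=o(q_{k-k_0})$, which holds since $2sq_{k-k_0}\le\tau q_k$ forces $s=o(q_k/q_{k-k_0})$ but actually we just need $s'$ negligible on the exponential scale $n\approx s'q_{k-k_0}$, i.e. $\log s'=o(n)$, automatic), and $|P_n^{>B_+}|\ge e^{-o(n)}$ similarly.

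The remaining step is to convert the good translate into a good interval \emph{centered} near a prescribed point $m$: one slides the window $[n_1,n_2]$ of length $n$ across all positions containing $m$ with $|m-n_j|\ge\sigma n$, i.e. $m-n_1$ ranging over an interval of length $\ge(1-2\sigma)n\gg n/2$; since the good translates of $x$ form a set of positive density (density $\to1$) among \emph{consecutive} integers by the equidistribution argument above, at least one admissible window position has its left endpoint at a good translate — and then the Cramer's-rule bound at both $n_1$ and $n_2$ follows because $n_2$ is automatically another translate of the same good window (the sub-block determinants entering the numerator being controlled unconditionally by Corollary \ref{cor_uniform_upper}). Finally one checks $100\tau q_k<|m|<3q_k/2$ guarantees that the entire good interval $[n_1,n_2]$, of length $2s'q_{k-k_0}-1\le\tau q_k$, stays within the region where the equidistribution and continued-fraction estimates on scale $q_{k-k_0}$ are valid and disjoint from the origin, so that the exponential decay rate $L(E)-o(1)$ is genuinely achieved. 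The one genuinely delicate point, as in \cite{JK,Kach}, is ensuring the \emph{same} lower-bound constant works simultaneously for both endpoints $n_1,n_2$ and uniformly as $E$ varies over a compact set where $L(E)-\beta>0$; this is handled by the uniformity clauses already built into Theorem \ref{th_ldt} and Corollary \ref{cor_uniform_upper}.
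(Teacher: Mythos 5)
Your overall skeleton---Cramer's rule via \eqref{eq_cramer_1}--\eqref{eq_cramer_2}, upper bounds on numerators via Corollary \ref{cor_uniform_upper}, lower bounds on the full denominator via the LDT, and a search over translates to dodge small denominators---is the same as the paper's, but your mechanism for finding and then locating the good translate has two genuine gaps.

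The measure/equidistribution argument does not close. The bad set is a union of up to $n$ intervals of radius $B_-/\gamma$ around intersection points; applying Koksma to the indicator of this set, whose total variation is $O(n)$, with discrepancy $O(s/n)$ yields an error term of size $O(s)\gg 1$, so equidistribution does not prevent all sampling points from landing in the bad set, and the claim that the good translates have ``density $\to 1$'' is unjustified. Choosing $B_-$ to be a fixed constant independent of $k$, as you propose, makes the measure $nB_-/\gamma\to\infty$, which is worse still. Nor is your fallback estimate $|P_n^{<B_-}|\ge(\gamma B_-)^{O(s')}$ valid: a count of the small factors gives no lower bound on their sizes, because a sampling point can sit arbitrarily close to an intersection point. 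What the paper actually does is an explicit separation plus pigeonhole argument: the Diophantine hypothesis $q_{k+1}\le q_k^{C_d}$ gives $\|(j_1-j_2)\alpha\|\ge \tfrac12 q_k^{-C_d}$ for distinct $j_1,j_2\in J$; since $|J|=2sq_{k-k_0}=n+1$ while $P_n$ has at most $n$ intersection points, at least one sampling point has no intersection point within $\tfrac14 q_k^{-C_d}$. Only then is $B_-(n)$ chosen \emph{scale-dependently} small (see \eqref{eq_choices_B} and the footnote about adjusting $C_d$) so that $\gamma^{-1}B_-(n)\le\tfrac14 q_k^{-C_d}$, whence $P_n^{<B_-}=1$ at that sampling point, and Theorem \ref{th_main_cramer} gives regularity.

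Second, ``sliding the window across positions containing $m$'' is not sufficient to force the good translate to lie near $m$. The pigeonhole needs one more sampling point than intersection points, and the paper achieves this with a \emph{two-part} sampling set $J$ (as in \eqref{eq_definition_J_base} and its analogue here): a copy of $sq_{k-k_0}$ consecutive points near the origin plus a translated copy near $m$, giving $2sq_{k-k_0}=n+1$ points in all. The good translate $j$ then lies in one of the two copies, and Proposition \ref{prop_origin} rules out the origin copy, since otherwise the origin would be $(L(E)-o(1),n)$-regular, contradicting the Schnol normalization \eqref{eq_initial_condition}. So $j$ must be in the copy near $m$, and the good interval covers $m$ with the required margins. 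Your proposal never sets up this dichotomy, so even granting that some translate is good, you have no mechanism to force the good interval to contain $m$ rather than the origin.
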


\begin{lemma}[Liouville to Diophantine transition]
\label{lemma_liouville_diophantine}
Assume that $q_{k+1}\le q_k^{C_d}$, $s>q_{k-k_0}^{C_d}$.
Let $s':=\lfloor s/10\rfloor$. Then, every $m$ with $100 \tau q_k<|m|<3 q_k/2$ is $(L(E)-o(1),2 s' q_{k-k_0}-1)$-regular.
\end{lemma}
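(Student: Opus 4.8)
The plan is to follow the same scheme as for Lemma~\ref{lemma_diophantine_diophantine}: reduce regularity of $m$ to a one-sided lower bound on a finite-volume determinant, and extract that lower bound from the large deviation Theorem~\ref{th_ldt}. Fix $m$ with $100\tau q_k<|m|<3q_k/2$ and set $n:=2s'q_{k-k_0}-1$. For an interval $[n_1,n_2]$ with $n_2-n_1+1=n$, $m\in[n_1,n_2]$ and $|m-n_j|\ge\sigma n$, Cramer's rule for the tridiagonal matrix $H_{[n_1,n_2]}(0)-E$ writes $(H_{[n_1,n_2]}(0)-E)^{-1}(n_j,m)$ as $\pm P_d(\cdot,E)/P_n(n_1\alpha,E)$, where $P_d$ is the determinant of the adjacent sub-block of length $d=n-1-|m-n_j|=|m-n_{j'}|\ge\sigma n$. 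Thus it suffices to bound every such sub-block determinant above by $e^{(L(E)+o(1))d}$ and to produce $n_1$ with $|P_n(n_1\alpha,E)|\ge e^{(L(E)-o(1))n}$; since $|m-n_j|\ge\sigma n$ with $\sigma$ fixed, an $o(n)$ error in the exponents becomes an $o(1)$ correction to $L(E)$, and the two cases $j=1,2$ give $(L(E)-o(1),n)$-regularity of $m$. The upper bound on $P_d$ is Corollary~\ref{cor_uniform_upper} applied with the decomposition $d=s''q_{k-k_0}+r''$, $|r''|<q_{k-k_0}$ (legitimate since $d<n<q_{k-k_0+1}$): one gets $|P_d^{<B_-}P_d^{\mathrm{mid}}|\le e^{d(L(E)+o(1))}$ once $B_\pm$ are allowed to drift slowly with $k$ so that $L_{\mathrm{corr}}(E,B_-,B_+)\to0$ and $(1/q_{k-k_0})\log(B_+/B_-)\to0$; the factor $P_d^{>B_+}$, which may be large when $f$ is unbounded, is absorbed either by the cancellation described below or, at sites where $f=-\infty$, by the infinite-coupling decoupling.

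The heart of the matter is producing $n_1$ in the admissible window $W=[m-(1-\sigma)n+1,\;m-\sigma n]$, of length $(1-2\sigma)n$, with $|P_n(n_1\alpha,E)|\ge e^{(L(E)-o(1))n}$. By Theorem~\ref{th_ldt}, $\frac1n\log|P_n^{\mathrm{mid}}(x',E)|=L(E)+o(1)$ uniformly, and $|P_n^{>B_+}(x',E)|\ge1$ (each factor exceeds $B_+-2>1$), so it is enough to find $x'=n_1\alpha$ at which $|P_n^{<B_-}(x',E)|\ge e^{-o(n)}$, i.e. at which $x'$ stays a reasonable distance from the intersection points $z_k(E)$. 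Here one uses the key count \eqref{eq_intersection_points}: there are at most $n$ intersection points, so integrating \eqref{eq_p_minus_twosided} and using $\gamma$-monotonicity gives $\int_0^1\log|P_n^{<B_-}(x',E)|\,dx'\ge-n\,o(1)$ as $B_-\to0$, whence a Markov argument controls the exceptional set $\{x':\log|P_n^{<B_-}(x',E)|<-\epsilon n\}$. The obstruction is that both this exceptional set (a union of up to $n$ tiny arcs around the $z_k$'s) and the orbit segment $\{n_1\alpha:n_1\in W\}$ carry $\sim n$ points, so a crude "measure times discrepancy" estimate is not decisive; following \cite{JL2} one opens gaps in the set of sampling points by varying the relevant interval sizes (the choice $s'=\lfloor s/10\rfloor$ leaving exactly the needed slack), then works inside a single short arc containing an $O(1)$-size cluster of sampling points, and there bounds the partial determinant from below by Lagrange interpolation of a low-degree polynomial extracted from it (Lemma~\ref{lemma_sampling_points}), the uniform upper bound of Corollary~\ref{cor_uniform_upper} at the other cluster points furnishing the lower bound at one of them. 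For unbounded $f$ one must at the same step keep the sub-orbits away from the poles of $f$, or rather note that a pole in $[m+1,n_2]$ contributes the same large magnitude to $P_n(n_1\alpha,E)$ and to the numerator $P_d$, hence cancels in the Cramer ratio (and a pole in $[n_1,m]$ only improves the bound), so that the $P^{>B_+}$ factors never spoil the estimate.

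The role of the hypothesis $s>q_{k-k_0}^{C_d}$ is to put every quantity above in an honest asymptotic regime. Since $q_{k-k_0}$ is the largest approximant below $\tau q_k/2$, one has $k_0\ge1$, $q_{k-k_0+1}\in(\tau q_k/2,q_k]$ and $s\asymp\tau q_k/(2q_{k-k_0})$; the hypothesis forces $s\to\infty$ and $n/q_{k-k_0}=2s'\to\infty$, which makes the discrepancy of the orbit on $W$ tend to $0$ and all the LDT error terms $o(1)$. Moreover $L(E)>\beta(\alpha)$ gives $\log q_{k-k_0+1}\le(\beta+o(1))q_{k-k_0}$, hence $\log n\le(\beta+o(1))q_{k-k_0}$, so the $\sim n$ combinatorial factors and the exponentially small exclusion windows of size $e^{-(L(E)-o(1))q_{k-k_0}}$ balance against each other with room to spare precisely because $L(E)-\beta(\alpha)>0$. (The complementary regime $s\le q_{k-k_0}^{C_d}$ of Lemma~\ref{lemma_diophantine_diophantine}, where $s$ and $n$ may remain bounded, is where the scheme needs the extra care taken there.) The main obstacle throughout is this sampling step: lacking any trigonometric-polynomial structure of $P_n(\cdot,E)$, one has to build the interpolating polynomial by hand from an $O(1)$-size cluster and control its interpolation constants uniformly on a compact energy interval, which is exactly the content of Lemma~\ref{lemma_sampling_points}.
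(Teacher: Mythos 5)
Your overall skeleton is right---Cramer's rule, the upper bound on the numerator from Corollary~\ref{cor_uniform_upper} together with Lemma~\ref{lemma_large_numerators} to handle $P^{>B_+}$ ratios, and producing a good shift $n_1$ via the large-deviation theorem and Lemma~\ref{lemma_sampling_points} applied inside a single cluster---and your identification of why the orbit $J\alpha$ breaks into $q_{k-k_0}$ well-separated clusters is accurate. But your description of the decisive step inverts the actual combinatorics and would not close the argument.

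You describe the decisive cluster as ``an $O(1)$-size cluster of sampling points'' to which one applies ``Lagrange interpolation of a low-degree polynomial.'' In the Liouville-to-Diophantine regime the number of sampling points in each cluster is $2s'$, and under the hypothesis $s>q_{k-k_0}^{C_d}$ this is \emph{large} (at least $\sim q_{k-k_0}^{C_d}/5$), not bounded. That is not incidental: there are up to $n=2s'q_{k-k_0}-1$ intersection points of $P_n$, and the pigeonhole over the $q_{k-k_0}$ disjoint cluster neighborhoods only guarantees a cluster whose neighborhood carries at most $2s'-1$ of them. Lemma~\ref{lemma_sampling_points} then needs $l+1\geq t+1$, i.e.\ $2s'$ sampling points, and produces a polynomial of degree $t$ up to $2s'-1$, giving the bound $\gamma^t(d/2)^t t!$ with $d\sim 1/q_k$. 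The factorial $t!\sim(2s')!$ is exactly what cancels the $(1/q_k)^{2s'-1}$ loss (using $q_k\le 20 s' q_{k-k_0}/\tau$ and Stirling), making the result $\geq e^{-n\,o(1)}$. If the clusters really contained $O(1)$ points, a cluster neighborhood could still contain $\gg 1$ intersection points, Lemma~\ref{lemma_sampling_points} would not apply, and no $O(1)$-degree interpolation could overcome the polynomial-in-$q_k$ separation scale. Your sketch describes what happens in the \emph{Diophantine} sub-case (Lemma~\ref{lemma_diophantine_diophantine}), where one uses $l=1,\,t=0$; here that collapse is impossible precisely because $s$ is large.

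Two smaller inaccuracies: the ``exponentially small exclusion windows of size $e^{-(L(E)-o(1))q_{k-k_0}}$'' do not appear anywhere---the cutoff $B_-(n)=q(n)^{-2C_d+1}$ is polynomially small in $q_{k-k_0}$ and the in-cluster spacing is $\sim 1/q_k$, so the balance is against $\log(t!)$, not against $\beta(\alpha)$; and the phrase ``the uniform upper bound of Corollary~\ref{cor_uniform_upper} at the other cluster points furnishing the lower bound at one of them'' misdescribes Lemma~\ref{lemma_sampling_points}: the Lagrange argument there is applied directly to the monic polynomial $(x-z_1)\cdots(x-z_t)$ built from intersection points and does not use upper bounds on $P_n$ at the other sampling points.
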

\begin{remark}
\label{rem_length_intervals_diophantine} In both Diophantine cases,
due to the definition of $s'$, the length of the good interval will be between $\tau q_k/20$ and $\tau q_k$.
\end{remark}
We will now address the case of Liouville transition $q_{k+1}>q_k^{C_d}$. For $\tau$ as above, let
\beq
\label{eq_bk_def}
b_k:=\lfloor \tau q_k\rfloor.
\eeq
We will consider integer points $m\in [-q_{k+1};q_{k+1}]$, and call them {\it resonant} if $\dist(m,q_k \Z)\le b_k$. Otherwise, they will be called {\it non-resonant}. For $\ell\in \Z$, let
$$
R_\ell:=[\ell q_k-b_k;\ell q_k+b_k].
$$
We will have three sub-cases. The first two cases, covering non-resonant points, are largely based on the same ideas as in Lemmas \ref{lemma_diophantine_diophantine} and \ref{lemma_liouville_diophantine} (in a way, the structure of the operator outside of a nearby resonant region somewhat resembles the structure in the Diophantine case near the origin). The third case, with substantially different analysis, covers the resonant regions.
\begin{lemma}[Diophantine to non-resonant transition]
\label{lemma_nonresonant_diophantine}
Suppose that $q_{k+1}>q_k^{C_d}$. Define $s$ as in $\eqref{eq_definition_s}$ and suppose that $s\le q_{k-k_0}^{C_d}$. Then, every non-resonant point $m\in [-q_{k+1};q_{k+1}]\setminus \cup_{\ell}R_{\ell}$ is $(L(E)-o(1),2s q_{k-k_0}-1)$-regular.
\end{lemma}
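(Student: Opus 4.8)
The plan is to find a good interval around the non-resonant point $m$ by using the large deviation theorem (Corollary \ref{cor_ldt_specific_s}) together with Cramer's rule. Since $m$ is non-resonant, we have $\dist(m,q_k\Z)>b_k=\lfloor\tau q_k\rfloor$, so there is room to center an interval of length $N=2sq_{k-k_0}-1\le\tau q_k$ (recall $2sq_{k-k_0}\le\tau q_k$ by \eqref{eq_definition_s}) so that $m$ sits near the middle of it, with both endpoints $n_1,n_2$ staying a distance at least $\sigma N$ from $m$ and the whole interval avoiding the nearby resonant region $R_\ell$. First I would fix the abstract interval $[0;N-1]$ and consider, for each integer $j$ in a suitable range of shifts, the translate $[j;j+N-1]$, i.e. the operator $H_N(x+j\alpha)$. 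By Cramer's rule, the Green's function entry $(H_{[n_1,n_2]}-E)^{-1}(n_i,m)$ is a ratio: the numerator is the determinant of a sub-block (a product of two smaller determinants, of sizes $|m-n_1|$ and $|n_2-m|$), and the denominator is $P_N(x+j\alpha,E)=\det(H_N(x+j\alpha)-E)$. The numerator is bounded above by the uniform upper bound of Corollary \ref{cor_uniform_upper} applied to the two sub-intervals, giving roughly $e^{(|m-n_1|+|n_2-m|)(L(E)+o(1))}=e^{N(L(E)+o(1))}$ up to the $P^{>B_+}$ correction and the $L_{\mathrm{corr}}$ and $1/q$ terms, which are $o(1)$ for appropriate choices of $B_\pm$ and large scales.

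The crux is therefore a lower bound on $|P_N(x+j\alpha,E)|$ for a good choice of shift $j$. Here I would use the structure of the determinant: $P_N=P_N^{<B_-}P_N^{\mathrm{mid}}P_N^{>B_+}$. The middle factor is controlled two-sidedly by Corollary \ref{cor_ldt_specific_s} and contributes $e^{N(L(E)-o(1))}$; the large factor $P_N^{>B_+}$ is $\ge 1$ by definition and its number of factors is controlled by \eqref{eq_p_plus_factors}. The only danger is that $P_N^{<B_-}(x+j\alpha,E)$ — a product over intersection points $z_k$ lying within $B_-/\gamma$ of the shifted base point — is exponentially small. To handle this, I would invoke the bound \eqref{eq_intersection_points}: for fixed $E$, there are at most $N$ values of $x$ in $[0,1)$ with $E\in\sigma(H_N(x))$, i.e. at most $N$ "bad" shifts; combined with \eqref{eq_p_minus_factors} bounding the number of small factors by $O(sB_-q_{k-k_0}/\gamma)+O(s)$ (since the relevant denominator is $q=q_{k-k_0}$ and $N=sq_{k-k_0}+O(1)$), a counting/pigeonhole argument over the range of admissible shifts $j$ (which is of size comparable to $q_k\gg N$, since $m$ is non-resonant and we stay away from $R_\ell$) produces a shift for which $|P_N^{<B_-}|\ge e^{-o(1)N}$. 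This is the essential mechanism: because $s\le q_{k-k_0}^{C_d}$ (the Diophantine sub-case), $s$ is only polynomially large in $q_{k-k_0}$, so $\log s = o(q_{k-k_0})=o(N/s)$, and all the combinatorial and rank-$O(s+|r|)$ error terms are negligible on the exponential scale $N$.

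Once a shift $j_0$ with $|P_N(x+j_0\alpha,E)|\ge e^{N(L(E)-o(1))}$ is secured, Cramer's rule gives $|(H_{[n_1,n_2]}-E)^{-1}(n_i,m)|\le e^{-(L(E)-o(1))|m-n_i|}$ for $i=1,2$, which is exactly $(L(E)-o(1),N)$-regularity with $N=2sq_{k-k_0}-1$, provided we also check that the interval can be placed so that $[n_1,n_2]$ (which has length $\le\tau q_k$) fits inside the non-resonant gap around $m$ — this is where non-resonance $\dist(m,q_k\Z)>b_k$ and the choice $N\le\tau q_k=b_k+O(1)$ are used, possibly after shrinking $\tau$ or working with $s'=\lfloor s/10\rfloor$ as in Lemmas \ref{lemma_diophantine_diophantine}, \ref{lemma_liouville_diophantine} to leave slack. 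The main obstacle I anticipate is precisely the bookkeeping in the shift-selection step: one must simultaneously (i) keep the shifted interval inside the non-resonant region and away from $R_\ell$, (ii) avoid the $\le N$ shifts killing the denominator outright via \eqref{eq_intersection_points}, and (iii) avoid shifts where too many intersection points cluster to make $P_N^{<B_-}$ small — and verify that the total count of forbidden shifts is strictly less than the number of available ones, using $q_{k+1}>q_k^{C_d}$ only insofar as it guarantees the non-resonant gap near a generic $m$ is large, but really relying on $s\le q_{k-k_0}^{C_d}$ to make the error terms subexponential.
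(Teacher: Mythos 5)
Your outline captures the right ingredients (Cramer's rule, the LDT decomposition $P_N = P_N^{<B_-}P_N^{\mathrm{mid}}P_N^{>B_+}$, Corollary \ref{cor_uniform_upper} for the numerator, Lemma \ref{lemma_sampling_points} and \eqref{eq_intersection_points} to control small denominators), but the shift-selection step — which you correctly identify as the crux — contains a counting error that leaves the argument with a genuine gap.

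You claim the range of admissible shifts $j$ has size comparable to $q_k\gg N$, ``since $m$ is non-resonant and we stay away from $R_\ell$.'' This is not right. To get $(L(E)-o(1),N)$-regularity of $m$, one must find an interval $[j;j+N-1]$ containing $m$ with $\dist(m,\{j,j+N-1\})\ge\sigma N$. That constrains $j$ to roughly $(1-2\sigma)N$ integers, independently of how large the non-resonant gap is. Since $P_N$ can have up to $N$ intersection points (by \eqref{eq_intersection_points}), and $(1-2\sigma)N<N$, the pigeonhole over shifts near $m$ alone need not produce a good shift: every sampling point $j\alpha$ could, in principle, land near an intersection point. The paper closes exactly this gap with a dichotomy: the candidate set $J$ consists of \emph{two} clusters of $sq_{k-k_0}$ shifts each — one placing the origin $0$ in the middle of the window, one placing $m$ there — so $\#J=2sq_{k-k_0}=N+1$, strictly more than the number of intersection points, and pigeonhole produces a good $j$. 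Then Proposition \ref{prop_origin} (the generalized eigenfunction is normalized at $0$, so $0$ cannot be $(\psi,\mu,r)$-regular) rules out the origin cluster, forcing the good $j$ to lie in the $m$-cluster. Your proposal never invokes Proposition \ref{prop_origin} and has no substitute for it; without the second cluster and the exclusion step, the pigeonhole count is off by a factor of roughly two and the argument does not go through.

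Two smaller remarks. First, the separation of the sampling points $J\alpha$ mod $1$ does use the non-resonance of $m$ — writing $m=\ell q_k + m'$ with $\tau q_k\le m'\le (1-\tau)q_k$, one has $\|\ell q_k\alpha\|\le\frac{1}{10q_k}$, so translating one cluster by $m$ perturbs its image under $\alpha$ by at most $\frac{1}{10q_k}$ and cannot collapse it onto the other; your writeup glosses over this. Second, in the Diophantine sub-case $s\le q_{k-k_0}^{C_d}$ the paper does not need the clustering/Lagrange machinery you invoke via \eqref{eq_p_minus_factors}: the $q_{k-k_0}^{-C_d-2}$-separation of the $N+1$ points in $J\alpha$, against $\le N$ intersection points, gives a $j$ whose $\gamma^{-1}B_-(N)$-neighborhood is entirely free of intersection points, so $P_N^{<B_-}(x+j\alpha)\ge 1$ outright (Lemma \ref{lemma_sampling_points} with $t=0$). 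The cluster-plus-Lagrange argument is only needed in the Liouville sub-cases (Lemmas \ref{lemma_liouville_diophantine}, \ref{lemma_nonresonant_liouville}).
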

\begin{lemma}[Liouville to non-resonant transition]
\label{lemma_nonresonant_liouville}
Suppose that $q_{k+1}>q_k^{C_d}$. Define $s$ as in $\eqref{eq_definition_s}$ and suppose that $s\ge q_{k-k_0}^{C_d}$. Let $s':=\lfloor s/10\rfloor$. Then, every non-resonant point $m\in [-q_{k+1};q_{k+1}]\setminus \cup_{\ell}R_{\ell}$ is $(L(E)-o(1),2s' q_{k-k_0}-1)$-regular.
\end{lemma}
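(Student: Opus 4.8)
\noindent\emph{Plan of proof.}\quad The plan is, for each non-resonant $m$, to exhibit a single good interval $[n_1,n_2]$ of length $N:=2s'q_{k-k_0}-1$ with $m$ in its $\sigma$-interior, and to read off the off-diagonal Green's function decay from Cramer's rule together with Theorem \ref{th_ldt} and Corollary \ref{cor_uniform_upper}. Throughout one fixes cut-offs $B_\pm=B_\pm(q_k)$ with $B_-\to 0$ and $B_+\to+\infty$ slowly enough in $q_k$ that $L_{\mathrm{corr}}(E,B_-,B_+)\to 0$ and the error terms of Corollaries \ref{cor_ldt_specific_s}--\ref{cor_uniform_upper} are $o(1)$; this is possible because the denominator relevant to a window of length comparable to $N$ is $q_{k-k_0}$, which tends to $+\infty$, while the remainder stays $O(1)$.

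First I would record the reduction. Since $N< \lfloor\tau q_k\rfloor=b_k<\dist(m,q_k\Z)$ and $N\ll q_k^{C_d}<q_{k+1}$, every interval $[n_1,n_1+N-1]$ with $n_1\in[m-(1-\sigma)N,\,m-\sigma N]$ lies in $[-q_{k+1},q_{k+1}]$ and misses $q_k\Z$; these are the candidate good intervals. Writing $n_2=n_1+N-1$ and $y=x+n_1\alpha$, Cramer's rule gives $|(H_{[n_1,n_2]}(x)-E)^{-1}(n_1,m)|=|P_{[m+1,n_2]}(x,E)|/|P_{[n_1,n_2]}(x,E)|$ and likewise at $(n_2,m)$. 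For the numerator, Corollary \ref{cor_uniform_upper} bounds $|P^{<B_-}_{[m+1,n_2]}P^{\mathrm{mid}}_{[m+1,n_2]}|$ by $e^{(L(E)+o(1))|m-n_2|}$; for the unbounded part $P^{>B_+}_{[m+1,n_2]}$ one argues exactly as in the unbounded localization argument of \cite{Kach} (in the bounded case one simply takes $B_+>\max|f|+2$, so that $P^{>B_+}\equiv 1$): a large factor of $P^{>B_+}_{[m+1,n_2]}$ forces a comparably large factor of $P^{>B_+}_{[n_1,n_2]}$, because $[m+1,n_2]\subset[n_1,n_2]$ and the extra factors on $[n_1,m]$ all exceed $1$, so the two contributions cancel up to $e^{o(1)|m-n_2|}$. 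Combining this with the lower bound $|P^{\mathrm{mid}}_{[n_1,n_2]}|\ge e^{(L(E)-o(1))N}$ of Theorem \ref{th_ldt}, the problem is reduced to choosing the shift $n_1$ so that $|P^{<B_-}_{[n_1,n_2]}(x,E)|=|P^{<B_-}_N(y,E)|\ge e^{-o(1)N}$; then the Cramer ratio is at most $e^{-(L(E)-o(1))|m-n_j|}$, and, since $|m-n_j|\ge\sigma N\to+\infty$, the point $m$ is $(L(E)-o(1),N)$-regular.

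The core is the production of such a shift. By \eqref{eq_p_minus_twosided}, $|P^{<B_-}_N(y,E)|\ge\prod_{\ell:\,\gamma\|y-z_\ell\|<B_-}\gamma\|y-z_\ell\|$, where $z_\ell=z_\ell(E)$ are the intersection points of $H_N(\cdot)$; by \eqref{eq_intersection_points} there are at most $N$ of them, and by Lemma \ref{lemma_quantity_intersections} at most $\tfrac{2NB_-}{\gamma}+o(N)$ contribute. Hence it suffices to find $n_1$ with $\|x+n_1\alpha-z_\ell\|\ge\rho$ for all contributing $\ell$, with $\rho$ only polynomially small in $q_k$. Here the Liouville hypothesis $s\ge q_{k-k_0}^{C_d}$ is used: since $q_{k-k_0+1}>q_{k-k_0}^{C_d}\,q_{k-k_0}\ge s\,q_{k-k_0}\gg N$, on any window of length $\lesssim N$ the rotation orbit is, up to small errors, a union of translated copies of the $q_{k-k_0}$-orbit, so Lemma \ref{lemma_quantity_intersections} forces very few intersection points into the corresponding short sub-intervals. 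Replacing the natural length $2s'q_{k-k_0}$ by $2s'q_{k-k_0}-1$ (and using only $s'=\lfloor s/10\rfloor$ of the available $q_{k-k_0}$-shifts) deliberately desynchronizes the interval length from $q_{k-k_0}$, which splits the set of $N+1$ sampling points $\{x+j\alpha:0\le j\le N\}$ into tight, well-separated clusters; to each cluster one applies a Lagrange-interpolation estimate --- were the determinant small at every point of a cluster, it would be small on a whole interval, contradicting the large-deviation estimate of Theorem \ref{th_ldt} there --- to obtain the required $n_1$. This is the step which would be isolated as a separate sampling-point lemma.

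I expect this last step to be the main obstacle, and for the same reason that it is the main novelty: in contrast with the almost Mathieu or the meromorphic cases, $P_N(\cdot,E)$ has no (even approximate) trigonometric-polynomial structure, so one cannot simply argue that $P_N$ cannot be small at $N+1$ equidistributed sampling points. Only the one-sided product bound \eqref{eq_p_minus_twosided} and the uniform upper bound survive, and one must compensate by combining the bounded count of intersection points, the rigidity of the rotation below scale $q_{k-k_0+1}$, and the interval-length mismatch to manufacture a configuration of sampling points on which a low-degree interpolation is legitimate. Once the shift is found, the remaining work --- the passage $o(1)\to 0$, uniformity in $E$ on compacta, and the bookkeeping for $P^{>B_+}$ in the unbounded case --- is routine.
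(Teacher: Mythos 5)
The skeleton of your plan matches the paper's approach (find a shift of a length-$(2s'q_{k-k_0}-1)$ window with large $|P^{<B_-}|$ via clustering of sampling points and Lagrange interpolation, then run Cramer's rule), but the quantitative core of the argument is wrong, and the omission would sink the proof precisely in the Liouville regime the lemma is meant to cover.

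You reduce the problem to finding a shift $n_1$ so that every contributing factor satisfies $\|x+n_1\alpha-z_\ell\|\ge\rho$ with $\rho$ \emph{polynomially small in} $q_k$. That reduction is not sufficient. After the pigeonhole selects a cluster, the number of intersection points in play is of order $s'$, and the trajectory spacing is only of order $1/q_k$, so the best possible per-factor bound is $\rho\sim q_k^{-1}$; the resulting product is about $q_k^{-2s'}=e^{-2s'\log q_k}$. You would need $2s'\log q_k\le N\,o(1)=2s'q_{k-k_0}\,o(1)$, i.e.\ $\log q_k/q_{k-k_0}\to 0$. But since $q_{k-k_0+1}\gtrsim\tau q_k$ by construction, $\log q_k/q_{k-k_0}\lesssim\log q_{k-k_0+1}/q_{k-k_0}$, which is only bounded by $\beta(\alpha)+o(1)$ and can sit at that level along a subsequence. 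So the per-factor bound gives $e^{-cN}$ for a \emph{nonzero} constant $c$, not $e^{-No(1)}$, and the lemma fails. The missing ingredient is the factorial gain in the interpolation lemma (Lemma~\ref{lemma_polynomial}/\ref{lemma_sampling_points}): a monic degree-$t$ polynomial sampled at $t+1$ points with spacing $d$ has some value $\ge(d/2)^t t!$, and with $t\le 2s'-1$, $d\sim 1/q_k$, $2s'q_{k-k_0}\sim\tau q_k$, the factorial converts the effective per-factor size from $1/q_k$ into $s'/q_k\sim 1/q_{k-k_0}$, giving $\exp(-N\cdot O(\log q_{k-k_0}/q_{k-k_0}))=e^{-No(1)}$, which only needs $q_{k-k_0}\to\infty$. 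Your heuristic for the Lagrange step (``small at every point of a cluster $\Rightarrow$ small on the whole interval, contradicting Theorem~\ref{th_ldt}'') is also not the right mechanism: the LDT is a statement about trajectory averages, not about pointwise smallness on an interval, and Lemma~\ref{lemma_sampling_points} is a \emph{direct} lower bound on the product $\gamma^t\prod|x-z_j|$ over a chosen cluster, not a proof by contradiction.

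Two smaller points, one essential: the cluster to which interpolation is applied must be \emph{selected} by a pigeonhole over the $q_{k-k_0}$ disjoint cluster-neighborhoods using the total count $\eqref{eq_intersection_points}$ of at most $N=2s'q_{k-k_0}-1$ intersection points, forcing one cluster to carry $\le 2s'-1$ of them; this is why the length is $2s'q_{k-k_0}-1$ rather than $2s'q_{k-k_0}$, not a ``desynchronization'' device. And your plan produces a good shifted interval without ruling out that it sits around the origin; the paper builds $J$ from two translated copies (one around $m$, one around $0$) and invokes Proposition~\ref{prop_origin} to exclude the latter, which is needed to conclude that the good interval is in fact around $m$.
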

The final main lemma covers the resonant points.
\begin{lemma}[Resonant case]
\label{lemma_resonant}
Suppose that $q_{k+1}>q_k^{C_d}$. Then, every resonant point $m\in R_{\ell}$ with $0<|\ell|\le \frac{q_{k+1}}{10 q_k}$ is $\l(\psi,\frac{1}{q_k}\log |\ell|+L(E)-\beta_k-3 \tau,r(1+O(\tau))\r)$-regular, where $r=2q_k$ or $r=4 q_k$.
\end{lemma}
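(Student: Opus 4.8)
The plan is to apply the large deviation machinery (Corollary \ref{cor_ldt_specific_s} and Corollary \ref{cor_uniform_upper}) on an interval of length $r \approx 2q_k$ or $4q_k$ centered near the resonant point $\ell q_k$, together with the Poisson formula \eqref{eq_poisson}, and to use the arithmetic structure of the rotation near $\ell q_k$ to compare the determinant of the box Hamiltonian at the resonant location with that at a nearby ``good'' shift. The key point is that when $m \in R_\ell$ with $|\ell| \le q_{k+1}/(10q_k)$, the points $\{x + j\alpha\}$ for $j$ ranging over an interval of length $\sim q_k$ near $\ell q_k$ are, by \eqref{eq_rotation_trajectory} and \eqref{eq_trajectory_estimates2}, within $|\ell|\cdot\|q_k\alpha\| \le |\ell|/q_{k+1}$ of the corresponding points near the origin. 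This $|\ell|/q_{k+1}$ displacement is what produces the extra decay rate $\frac{1}{q_k}\log|\ell|$: roughly, the determinant $P_r(x,E)$ evaluated at the resonant translate differs from its value at a well-chosen non-resonant translate, and the ratio of the two controls the off-diagonal Green's function decay via Cramer's rule, picking up $\log(q_{k+1}/|\ell|)$ worth of smallness over a window of length $q_k$.

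First I would fix the interval $[n_1,n_2]$ of length $r$ around $m$ so that $m$ is well inside it (this is why $r = 2q_k$ or $4q_k$ is chosen — to guarantee $|m-n_j| \ge \sigma r$ for $m$ anywhere in $R_\ell$). Then I would write the Green's function matrix element $(H_{[n_1,n_2]}-E)^{-1}(n_j,m)$ as a ratio $P_{[n_1,j-1]} \cdot P_{[j+1,n_2]} / P_{[n_1,n_2]}$ of sub-determinants (Cramer's rule). For the numerator I would use the uniform upper bound from Corollary \ref{cor_uniform_upper}, controlling the $P^{<B_-}$ and $P^{\mathrm{mid}}$ parts by $e^{(|n_j-m|)(L(E)+o(1))}$ and estimating the $P^{>B_+}$ contribution via \eqref{eq_p_plus_twosided_twosided}–\eqref{eq_p_plus_factors} with the integrability hypothesis \eqref{eq_f3} (choosing $\tau$ and the cutoffs $B_\pm$ so that all the $L_{\mathrm{corr}}$ and factor-count error terms are absorbed into the $3\tau$ in the statement). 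For the denominator $P_{[n_1,n_2]}(x,E) = P_r(x+n_1\alpha, E)$ I need a lower bound of order $e^{r(L(E)-\beta_k - O(\tau))}$; here $\beta_k = \frac{\log q_{k+1}}{q_k}$, so $r\beta_k \approx 2\log q_{k+1}$ (or $4\log q_{k+1}$), and I need to see that the determinant at the resonant location is no smaller than roughly $e^{rL(E)} \cdot (|\ell|/q_{k+1})$.

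To get that lower bound I would use the intersection-point count (Lemma \ref{lemma_quantity_intersections}) together with \eqref{eq_intersection_points}: the number of ``bad'' small factors in $P_r^{<B_-}$ is controlled, and by Corollary \ref{cor_ldt_specific_s} the product of the mid-range factors already gives $e^{rL(E) - o(r)}$. The only loss comes from factors $\gamma\|(x+n_1\alpha) - z_j\| $ that are smaller than $B_-$; using the rotation structure \eqref{eq_rotation_trajectory} near $\ell q_k$, each such near-collision at the resonant translate corresponds to a collision at the origin translate displaced by $\le |\ell| \|q_k\alpha\|$, and since $\|q_k\alpha\| \le 1/q_{k+1}$, the smallest such factor is $\gtrsim |\ell|/q_{k+1}$. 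Multiplying in the bounded number (by \eqref{eq_p_minus_factors} with $s = O(1)$, $r = O(q_k)$) of these factors loses at most $O(1)\log(q_{k+1}/|\ell|) = O(q_k)\cdot(\beta_k - \frac{1}{q_k}\log|\ell|)$ on the log scale — which, divided into the window length $|m - n_j| \sim q_k$ used in \eqref{eq_poisson}, yields exactly the claimed exponent $\frac{1}{q_k}\log|\ell| + L(E) - \beta_k - 3\tau$.

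The main obstacle I anticipate is the denominator lower bound: making precise that the near-collisions of the eigenvalue branches $\Lambda_j$ with $E$ at the resonant translate cannot be worse than $|\ell|/q_{k+1}$, and that there are only $O(1)$ of them. The subtlety is that the $\Lambda_j$ are not the diagonal entries $f(\cdot)$ but genuine eigenvalues, so the comparison of intersection points $z_j$ at shift $\ell q_k\alpha$ versus shift $0$ is not literally a translation; one must combine the $\gamma$-monotonicity estimate \eqref{eq_distances_lowerbound}, the bound \eqref{eq_intersection_points} on total intersection count, and the rotation estimate \eqref{eq_trajectory_estimates2} carefully, possibly after first choosing the center $n_1$ of the box to avoid a bad shift (a ``good shift'' selection argument as in item (3) of the introduction's list, via Lemma \ref{lemma_sampling_points}). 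A secondary technical point is bookkeeping the $\psi$-regularity (rather than plain regularity): since the statement is only $(\psi,\cdot,\cdot)$-regular, I may only be able to control one of the two endpoints' Green's functions and must use \eqref{eq_poisson} with the a priori polynomial bound $|\psi(m)| \le C_\psi(1+|m|)$ on the other side, which is harmless on the log scale but needs to be tracked.
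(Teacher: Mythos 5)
Your proposal correctly identifies the central arithmetic mechanism -- the factor $|\ell|\,\|q_k\alpha\|\ge |\ell|/(2q_{k+1})$ coming from near-collisions displaced by $\ell q_k\alpha$, and the good-shift selection via Lemma~\ref{lemma_sampling_points} (in the paper this is two sampling points per cluster, one allowed intersection point, $d=|\ell|\|q_k\alpha\|$). That part matches the paper. But there is a genuine gap in how you turn this into regularity, and a misconception about what $\psi$-regularity is for here.

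The difficulty is quantitative: with the lower bound $|P_{2q_k-1}(x+j\alpha)|\gtrsim e^{\log|\ell|+2q_k(L(E)-\beta_k/2)}$ and the numerator bound $e^{d\,L(E)}$ from Corollary~\ref{cor_uniform_upper}, the Cramer's-rule estimate for $(H_{[j,j+2q_k-2]}-E)^{-1}(j,m)$ is of order $e^{-|m-j|L(E)+\beta_k q_k-\log|\ell|}$. This is small only when $|m-j|\ge (\beta_k/L(E))q_k$, and for $|m-j|$ close to $q_k/2$ it is exponentially \emph{large} whenever $\beta_k<L(E)<2\beta_k$. Since the selected shift $j$ ranges over a window of length $q_k$, you cannot arrange for $m$ to sit at the center of $[j,j+2q_k-2]$, so one of the two Green's function entries you need is out of control. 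Your suggested remedy --- use the a priori bound $|\psi|\le C_\psi(1+|m|)$ on the uncontrolled side --- does not help: on the logarithmic scale the exponentially large factor $e^{\beta_k q_k}$ swamps the polynomial bound, and you would not obtain the claimed exponent $L(E)-\beta_k-3\tau+\tfrac{1}{q_k}\log|\ell|$.

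What the paper does instead, and what is missing from your proposal, is a self-referential ``domination'' argument among the resonant maxima $r_s=\max_{R_s}|\psi|$. One applies the Poisson formula not at an arbitrary $m\in R_\ell$ but at the \emph{maximizer} $m_s\in R_s$ (with $s=0$ or $s=\ell$, depending on which half of $J$ contains the good shift $j$), for which both distances to $\partial J'$ are $q_k(1\pm O(\tau))$, so the exponentially large factor is exactly $e^{\beta_k q_k-\log|\ell|}$ with no extra loss. The two endpoint values $|\psi(j-1)|$, $|\psi(j+2q_k-1)|$ land somewhere strictly between $R_{s-2}$ and $R_{s+2}$, and Lemma~\ref{lemma_domination_resonant} (built on the non-resonant Lemmas~\ref{lemma_nonresonant_diophantine},~\ref{lemma_nonresonant_liouville}, which you do not invoke) converts these into $r_{s-1},r_s,r_{s+1},r_{s\pm 2}$ with additional exponential gain. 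This produces the recursion \eqref{eq_decay_resonances}, in which $r_s$ appears on \emph{both} sides; one then moves $r_s$ to the left (its coefficient is $e^{-(L(E)-\beta_k-O(\tau))q_k}\ll 1$ precisely because $L(E)>\beta_k$). The dichotomy $r=2q_k$ versus $r=4q_k$ then records whether the $r_{s\pm1}$ group or the $r_{s\pm 2}$ group dominates --- not, as you suggest, a geometric choice to center $m$ inside the interval. Finally, $s=0$ is excluded by Proposition~\ref{prop_origin}, forcing $s=\ell$. Without the detour through the resonant maxima and the recursive elimination of $r_s$, the argument does not close in the full range $L(E)>\beta_k$.
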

\begin{remark}
\label{rem_length_intervals_liouville}
In the Liouville case, the lengths of good intervals are between $\tau q_k/20$ and $4 q_k(1+O(\tau))$.
\end{remark}
\subsection{Proof of Theorem \ref{th_main}}
\label{subsection_proof_main}
We will now assume that, for sufficiently large $k$, Lemmas
\ref{lemma_diophantine_diophantine},\ref{lemma_liouville_diophantine},\ref{lemma_nonresonant_diophantine},\ref{lemma_nonresonant_liouville}
and \ref{lemma_resonant}  all hold, so the corresponding good intervals exist. Let $\psi$ be a generalized eigenfunction satisfying \eqref{eq_initial_condition}: as a reminder,
$$
H(x)\psi=E\psi,\quad |\psi(0)|^2+|\psi(1)|^2=1,\quad |\psi(m)|\le C_{\psi}(1+|m|).
$$
Our first goal will be to establish exponential decay of $\psi$. Let $m\in \mathbb Z$. We will estimate $|\psi(m)|$ in the cases of Diophantine and Liouville transitions. There will be some overlap between the cases. With $\tau$ as above, one can assume, without loss of generality, that $m>0$.

Suppose that $\sqrt{\tau}q_k\le m\le \sqrt{\tau}q_{k+1}$. In the Diophantine case, all good intervals have exponents $L(E)-o(1)$, and the lengths between $\tau q_k/20$ and $q_k$. Apply Lemmas \ref{lemma_dominating} and \ref{lemma_terminating} in the setting of Remark \ref{rem_same_exponent} with
$$
[N_1;N_2]:=[\tau q_k;q_{k+1}/10], \quad [N_1^-;N_2^+]:=[0;q_{k+1}/10 + q_k],\quad L:=\tau q_k/20,\quad \mu:=L(E)-o(1),
$$
with good intervals obtained from Lemmas \ref{lemma_diophantine_diophantine} and \ref{lemma_liouville_diophantine}. We have, from \eqref{eq_poisson_applied2},
%$[N_1;N_2]:=[\sqrt{\tau} q_n;\sqrt{\tau} q_{n+1}]$, $[N_1^-,N_2^+]:=[\tau q_n,q_{n+1}]$. We get
$$
\psi(m)\le C_{\psi}(1+|q_k|)e^{-(L(E)-o(1))|m-N_1|}+C_{\psi}(1+|q_{k+1}|)e^{-(L(E)-o(1))|m-N_2|},\quad m\in [\tau q_k,q_{k+1}/10].
$$
In view of the original assumption $\sqrt{\tau} q_k\le m\le \sqrt{\tau} q_{k+1}$ and the choice of $\tau$ with, say, $\tau</400$, we have
$$
m-N_1\ge m-\tau q_k\ge (1-\sqrt\tau)m,\quad N_2-m\ge \frac{1}{10 \sqrt{\tau}}m-m\ge m.
$$
As a consequence, we have
\beq
\label{eq_decay_dio}
|\psi(m)|\le e^{-((L(E)-o(1))(1-\sqrt{\tau})m},\quad \sqrt{\tau} q_k\le m\le \sqrt{\tau} q_{k+1}.
\eeq
Here we absorbed the Schnol pre-factors into $o(1)$, making it dependent on $\psi$. Since $\tau$ can be made arbitrarily small, it can also be absorbed into $o(1)$ as the last step.

In the Liouville case, we will apply Lemmas \ref{lemma_dominating} and \ref{lemma_terminating} with good intervals obtained from Lemmas \ref{lemma_nonresonant_diophantine}, \ref{lemma_nonresonant_liouville}, and \ref{lemma_resonant}. Recall that the lengths of these intervals are between $\tau q_k/20$ and $4 q_k(1+O(\tau))$ which, again, makes $L=\tau q_k/20$ and $\mu=L(E)-\beta_k-o(1)-O(\tau)$. The rest is similar to the previous case: for
$$
m\in [\tau q_k;q_{k+1}/10]:=[N_1;N_2]\subset [N_1^-;N_2^+]=:[0;q_{k+1}/10 + q_k],
$$
one has
\beq
\label{eq_decay_lio_simple}
|\psi(m)|\le e^{-((L(E)-\beta_k-o(1))(1-\sqrt{\tau})m},\quad \sqrt{\tau} q_k\le m\le \sqrt{\tau} q_{k+1}.
\eeq
By combining \eqref{eq_decay_dio} and \eqref{eq_decay_lio_simple}, we show that every generalized eigenfunction decays exponentially, thus completing the proof of Theorem \ref{th_main}.\,\qed

\subsection{Improvements in the exponential decay of eigenfunctions} 
\label{subsection_improvements}
We will now discuss some improvements to the decay of eigenfunctions in the case of the Liouville transition. As above, let
$$
[\tau q_k;q_{k+1}/10]:=[N_1;N_2]\subset [N_1^-;N_2^+]=:[0;q_{k+1}/10 + q_k],
$$
and assume that $m\in [\sqrt{\tau}q_k;\sqrt{\tau} q_{k+1}]$ (again, without loss of generality, assume $m>0$). In view of Remark \ref{rem_same_exponent}, we would like to find lower bounds on the weights of the paths that start at $m$ and terminate at $[N_1^-;N_1)$ and $(N_2;N_2^+]$. Since the distance from $q_{k+1}/10$ to $q_{k+1}$ is much larger than the distance to the origin, it is easy to see even from simpler estimate \eqref{eq_poisson_applied} that, for $\tau \ll 1$ the contribution of the paths terminating at $(N_2;N_2^+]$ contains a much smaller exponential factor, and can be absorbed into the contribution of the paths terminating near the origin (the first term in \eqref{eq_poisson_applied}).

If $m$ is in a resonant zone $R_\ell$, then the worst possible case (meaning the case with lowest theoretically possible weight) would be if all good intervals are from Case (1) of Step 2.2, and the next iteration point always ends up in the resonant zone, since in this case $\mu_m$ will be the smallest. Such a path would travel from $R_\ell$ to $R_{\ell-1},R_{\ell-2},\ldots$, and then finish near origin where it would terminate. It is easy to see that any deviation from this scenario (either by ending up in a non-resonant zone, or by having Case (2) of Step 2.2) would lead to using good intervals with larger exponents which would result in a larger weight (say, modulo an $O(\tau)w(P)$ correction resulting in accounting for endpoints). If $m$ is not in a resonant zone, then the most efficient way to get the lowest weight would be to use non-resonant good intervals to arrive to one of the two nearest resonant zones, and then proceed as before. Overall, this leads to two paths $Q_1$, $Q_2$ whose weights could serve as lower bounds:
$$
w(Q_1)=\log (\ell!)+(L(E)-\beta_k-O(\tau)-o(1))\ell q_k+(L(E)-o(1))|m-\ell q_k|.
$$
$$
w(Q_2)=\log ((\ell+1)!)+(L(E)-\beta_k-O(\tau)-o(1))(\ell+1)q_k+(L(E)-o(1))|(\ell+1)q_k-m|.
$$
Note that, modulo the difference that can be absorbed into $o(1)$, we can replace both $\log (\ell!)$ and $\log (\ell+1)!)$ by $\ell \log (\ell+1)$.

%Let us first simplify $w(P_2)$. Using the definition of $\beta_n$ and absorbing various corrections into $o(1)$, we have, ultimately
%$$
%w(P_2)=m\left(L(E)-\beta_n+\frac{\log m}{q_n}-o(1)-O(\tau)\right).
%$$
In order to rewrite these expressions in a unified form, let $s:=\ell$ if the minimal weight is attained on $Q_1$, and $s:=\ell+1$ if attained on $Q_2$. Let $Q$ be the corresponding path out of $Q_1$, $Q_2$ with the smallest weight. Then it is easy to see that
\begin{multline*}
w(Q)=L(E)\dist(m,R_{s})+s\log (s+1) +(L(E)-\beta_k)s q_k+L(E)m (o(1)+O(\tau))\\
=\l(L(E)-\beta_k+\frac{\log m}{q_k}\r)s q_k+L(E)\l(\dist(m,R_{s})+m (o(1)+O(\tau))\r).	
\end{multline*}
Similar estimates have been obtained in \cite{JL2}, \cite{JHY} for the almost Mathieu operators and the Maryland model. From Lemmas \ref{lemma_dominating}, \ref{lemma_terminating}, Schnol's lemma, Poisson's formula, and after absorbing various corrections into $O(\sqrt{\tau})$, we finally obtain
\beq
\label{eq_improvement_1}
|\psi(m)|\le e^{-w(Q)(1-O(\sqrt{\tau}))},\quad \sqrt{\tau} q_k\le m\le \sqrt{\tau} q_{k+1}.
\eeq
Note that, in all cases
\beq
\label{eq_improvement_2}
w(Q)\ge (L(E)-\beta_n-O(\tau)-o(1))m,
\eeq
which was already noted when obtaining \eqref{eq_decay_lio_simple}. However, $\eqref{eq_improvement_1}$ is significantly more precise than \eqref{eq_improvement_2}. Indeed
let us now discuss some situations where the decay rate can get better than in \eqref{eq_improvement_2}. From now on, we will discuss all decay rates modulo a factor $(1+O(\sqrt{\tau}))$.

For $s=0$, we have $w(Q)=m L(E)$. This corresponds to the case when $m$ is between $R_0$ and $R_1$ and not too close to $R_1$. As a consequence, we reproduce the Lyapunov decay in the non-resonant case. Similar improvements will happen every time between resonances; however, the relative contribution of this improvements will get smaller and smaller as $s$ gets larger.

The worst possible case is when $m$ either close to $R_{s}$ or $s$ is large enough so that $L(E)\dist(m,R_s)$ does not give a meaningful improvement, but $s$ is too small for the term $s \log (s+1)$ to give a noticable improvement. In this case, the bound on the decay rate of $\psi$ is $L(E)-\beta_k$, as in \eqref{eq_decay_lio_simple}.

Once $\ell$ becomes comparable to $\frac{q_{k+1}}{q_k}$ on the logarithmic scale (that includes a neighborhood of $\sqrt{\tau}q_{k+1}$ in the log scale), the ratio $\frac{\log m}{\log q_k}$ cancels out $\beta_k$, and we return to the decay rate $L(E)$. One can state that, on the boundary of the region $[\sqrt\tau q_k;\sqrt{\tau}q_{k+1}]$ the decay rate is $L(E)$ as in the non-resonant case.
\subsection{Some known improvements: uniform localization and localization for almost every phase}
For the convenience of the reader, we will discuss some other improvements mentioned in \cite{JK,Kach}. The first one is (locally) uniform localization in the following sense.
\begin{cor}
\label{cor_uniform_localization}
Under the assumptions of Theorem $\ref{th_main}$, for every $\delta>0$ and $M>0$ there exists a sequence
$$
\ep_n=\ep_n(M,f,\alpha,\delta)\to +0\quad \text{as}\quad n\to +\infty
$$
such that, for every solution of the eigenvalue equation
$$
H(x)\psi=E\psi,\quad \|\psi\|_{\ell^{\infty}(\Z)}=1,\quad L(E)>\beta(\alpha)+\delta,\quad |E|<M,
$$
there exists $n_0=n_0(\psi)$ such that
$$
|\psi(n-n_0)|\le \exp\{-(L(E)-\beta-\ep_n)|n-n_0|\}.
$$
\end{cor}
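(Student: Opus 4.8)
The plan is to reduce the corollary to the proof of Theorem \ref{th_main} itself; the only point is that in the $\ell^\infty$-normalized setting the Schn\"ol constant $C_\psi$ may be taken equal to $1$ for all admissible eigenfunctions at once, and this is exactly what makes all the error terms appearing in that proof $\psi$-independent. Concretely, given $\psi$ with $H(x)\psi=E\psi$, $\|\psi\|_{\ell^\infty(\Z)}=1$, $L(E)>\beta(\alpha)+\delta$, $|E|<M$, I would pick $n_0=n_0(\psi)\in\Z$ with $|\psi(n_0)|\ge\tfrac12$ (such a point exists since the supremum equals $1$) and pass to the shifted data $\tilde x:=x+n_0\alpha$, $\tilde\psi(n):=\psi(n+n_0)$, so that $H(\tilde x)\tilde\psi=E\tilde\psi$, $|\tilde\psi(n)|\le 1\le 1+|n|$ (hence $C_{\tilde\psi}=1$), and $|\tilde\psi(0)|\ge\tfrac12$. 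Since $\{H(\cdot)\}$ is the same operator family, all the machinery of the preceding sections applies to the phase $\tilde x$.

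Next I would replay Subsection \ref{subsection_proof_main} verbatim with $(H(\tilde x),\tilde\psi)$ in place of $(H(x),\psi)$. The five main Lemmas \ref{lemma_diophantine_diophantine}, \ref{lemma_liouville_diophantine}, \ref{lemma_nonresonant_diophantine}, \ref{lemma_nonresonant_liouville}, \ref{lemma_resonant} hold for every phase with $o(1)$ and $O(\tau)$ corrections depending only on $\alpha,f,E$ and uniform on the window $\{|E|\le M,\ L(E)-\beta(\alpha)\ge\delta\}$ --- a uniformity inherited from the large-deviation Theorem \ref{th_ldt}, which is uniform in $x$ and on compact energy intervals; in particular these corrections do not see $\tilde\psi$. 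The only $\psi$-dependent ingredient of Subsection \ref{subsection_proof_main} is Proposition \ref{prop_origin}, and with $C_{\tilde\psi}=1$ and $\mu=L(E)-\beta(\alpha)-o(1)\ge\delta/2$ on all large scales its threshold becomes $r_0=r_0(\sigma,\delta)$, independent of $\tilde\psi$; hence $0$ cannot be $(\tilde\psi,\mu,r)$-regular once $r\ge r_0(\sigma,\delta)$. Feeding the $\tilde\psi$-good collections of intervals produced by the main lemmas into Lemmas \ref{lemma_terminating} and \ref{lemma_dominating} (in the form of Remark \ref{rem_same_exponent}), exactly as in the derivation of \eqref{eq_decay_dio} and \eqref{eq_decay_lio_simple}, yields
\[
|\tilde\psi(m)|\le \exp\{-(L(E)-\beta(\alpha)-\ep_{|m|})|m|\},\qquad \sqrt{\tau}\,q_k\le |m|\le \sqrt{\tau}\,q_{k+1},
\]
for all large $k$, where now the Schn\"ol prefactors $C_{\tilde\psi}(1+q_k)=1+q_k$ get absorbed into an $\ep_n$ depending only on $M,f,\alpha,\delta$ (taking $\tau=\tau_n\to 0$ slowly, so that $\tau$ too is absorbed). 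Since the ranges $[\sqrt{\tau}q_k;\sqrt{\tau}q_{k+1}]$ exhaust the large integers and the bound is trivial for bounded $|m|$ after enlarging $\ep_n$, it holds for all $m$; translating back through $\tilde\psi(m)=\psi(m+n_0)$ gives the assertion of the corollary, with $n_0$ the chosen center.

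The entire content of the argument is bookkeeping of uniformity, and that is also where the only real care is needed: one must verify that \emph{every} error in the proof of Theorem \ref{th_main} --- the $o(1)$'s and $O(\tau)$'s in the five main lemmas, the Schn\"ol prefactors in the patching step, the scale beyond which the lemmas become valid, and the threshold $r_0$ of Proposition \ref{prop_origin} --- is governed purely by $M,f,\alpha,\delta$. The $\ell^\infty$-normalization (which gives $C_\psi=1$) removes the dependence on the eigenfunction; the phase-uniformity of Theorem \ref{th_ldt} and of the main lemmas removes the dependence on $x$, equivalently on the shift $n_0$; and restricting to $\{|E|\le M,\ L(E)-\beta(\alpha)\ge\delta\}$ makes the energy-dependence uniform. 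The one place deserving a second look is Lemma \ref{lemma_resonant}, which delivers only $(\tilde\psi,\cdot,\cdot)$-regularity: one should confirm that the exponents it produces are independent of $\tilde\psi$, which holds because the inequalities underlying it are of block-resolvent type and rely only on the deterministic estimates of Theorem \ref{th_ldt}, not on any property of $\tilde\psi$.
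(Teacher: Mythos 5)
Your argument is correct and takes essentially the same route as the paper's (very terse) proof: both exploit the $\ell^\infty$-normalization and a phase shift to place the (near-)maximum of $\psi$ at the origin, so that the Schn\"ol constant $C_\psi$ becomes universal and the remaining error terms in the proof of Theorem \ref{th_main} are already uniform in $x$ and in $E$ on the stated energy window. Your version is a more careful bookkeeping of the same idea; the one genuinely different (and harmless) detail is that you pick $n_0$ with $|\psi(n_0)|\ge\tfrac12$ rather than an actual maximizer, which sidesteps the need to first know $\psi\in\ell^2$.
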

\begin{proof}
In the proof of Theorem \ref{th_main}, the only step that would not immediately lead to the above uniform bounds was the choice of a generalized eigenfunction $\psi$ with 
\beq
\label{eq_schnol_assumption}
|\psi(n)|\le C(1+|n|).
\eeq
 In this form, $C$ must depend on $\psi$, since the localization center may be far away from the origin. However, once $\psi\in \ell^2$, one can normalize it in $\ell^{\infty}$ and, by changing $x$, reduce it to the case when $\psi$ attains its maximal value at the origin, therefore satisfying (trivially) \eqref{eq_schnol_assumption} with $C=1$.
\end{proof}
\begin{cor}
\label{cor_almost_every_x}
Under the assumptions of Theorem $\ref{th_main}$, let $A\subset \R$, and suppose that $L(E)>\beta(\alpha)$ Lebesgue a. e. on $A$. Then, for Lebesgue a. e. $x\in [0,1)$, the set $A$ can only support purely point spectrum of $H(x)$.
\end{cor}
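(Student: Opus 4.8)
The statement asserts an almost-sure (in $x$) spectral conclusion from the mere pointwise bound $L(E)>\beta(\alpha)$ on a Lebesgue-measurable set $A$. Theorem \ref{th_main} already tells us that the set $S:=\{E:L(E)>\beta(\alpha)\}$ supports only pure point spectrum, with exponentially decaying generalized eigenfunctions, \emph{uniformly in $x$} (the construction in Subsection \ref{subsection_proof_main} works for every $x\in\R$). So the entire content of the corollary is a measure-theoretic bookkeeping step: upgrading ``$A\subseteq S$ up to Lebesgue-null'' to ``$\mathbb{E}_{H(x)}(A)=\mathbb{E}_{H(x)}(A\cap S)$ for a.e. $x$,'' after which Theorem \ref{th_main} finishes the job on $A\cap S$.

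The plan is as follows. First, write $A=(A\cap S)\cup N$ where $N:=A\setminus S$ has Lebesgue measure zero by hypothesis. On $A\cap S$ Theorem \ref{th_main} directly gives that the spectral measure restricted there is pure point (with exponentially decaying eigenfunctions), for \emph{every} $x$. So it suffices to show that, for Lebesgue-a.e.\ $x\in[0,1)$, the spectral measure $\mu_x:=\langle \mathbb{E}_{H(x)}(\cdot)\,e_0,e_0\rangle$ (and likewise the one with $e_1$, which together with $e_0$ form a generating set for the whole operator) satisfies $\mu_x(N)=0$. This is exactly where the density of states enters: by \eqref{ids_spectral},
$$
\int_{[0,1)}\langle \mathbb{E}_{H(x)}(N)\,e_0,e_0\rangle\,dx=dN(N)=0,
$$
because $dN$ is absolutely continuous with respect to Lebesgue measure (Remark \ref{rem_lipschitz_ids}; density bounded by $\gamma^{-1}$) and $|N|=0$. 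Since the integrand is nonnegative, it vanishes for a.e.\ $x$; the same argument with $e_1$ in place of $e_0$ gives another null set of exceptional $x$, and outside the union of these two null sets we have $\langle \mathbb{E}_{H(x)}(N)\,e_j,e_j\rangle=0$ for $j=0,1$, hence $\mathbb{E}_{H(x)}(N)=0$ because $e_0,e_1$ are a cyclic family for the Jacobi matrix $H(x)$.

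It remains to combine these. Fix $x$ outside the exceptional null set. Then for any $\varphi\in\ell^2(\Z)$,
$$
\langle \mathbb{E}_{H(x)}(A)\varphi,\varphi\rangle=\langle \mathbb{E}_{H(x)}(A\cap S)\varphi,\varphi\rangle+\langle \mathbb{E}_{H(x)}(N)\varphi,\varphi\rangle=\langle \mathbb{E}_{H(x)}(A\cap S)\varphi,\varphi\rangle,
$$
so the part of the spectral measure of $H(x)$ lying in $A$ is carried by $A\cap S\subseteq S$, which by Theorem \ref{th_main} is pure point with exponentially decaying eigenfunctions. Hence $A$ supports only pure point spectrum of $H(x)$ for a.e.\ $x$, as claimed. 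The only genuinely nontrivial input beyond Theorem \ref{th_main} is the absolute continuity of $dN$ with respect to Lebesgue measure, which is already recorded in Remark \ref{rem_lipschitz_ids} (equivalently Proposition \ref{prop_monotone_branches}); everything else is the standard spectral-averaging identity \eqref{ids_spectral} together with cyclicity of $\{e_0,e_1\}$ for a one-dimensional Jacobi operator. I do not anticipate a real obstacle here — the ``hard part,'' if any, is simply being careful that one needs the two vectors $e_0,e_1$ (not just $e_0$) to control the full spectral projection of $H(x)$.
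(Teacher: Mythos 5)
Your argument is correct and follows essentially the same route as the paper's proof: decompose $A$ into the part inside $\{L>\beta\}$ (handled by Theorem \ref{th_main}) and the Lebesgue-null remainder, kill the latter using Lipschitz continuity of the IDS together with spectral averaging \eqref{ids_spectral}, and conclude. You are somewhat more explicit than the paper in spelling out that one needs \emph{both} $e_0$ and $e_1$ (with the shift-invariance of Lebesgue measure handling the $e_1$ case) to control the full spectral projection, which is a correct and worthwhile detail to include.
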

\begin{proof}
Follows from the same argument as in \cite{JK,Kach}: since the integrated density of states is Lipschitz continuous, the ``bad'' set 
$$
\{E\in A\colon L(E)\le \beta(\alpha)\}
$$
must have zero density of states measure, and therefore zero spectral measure for almost every $x\in[0,1)$.
\end{proof}
\section{Regular points and good intervals from the large deviation theorem and the Cramer's rule}
Throughout this section, we will fix $E$ and denote
$$
P_n(x):=P_n(x,E).
$$
Let $[n_1;n_2]$ be an integer interval of length $n$ (that is, $n_2=n_1+n-1$). We will use the Cramer's rule approach to estimating Green's functions, originally appeared in \cite{Lana94} and developed in \cite{Lana99} and later works:
\beq
\label{eq_cramer_1}
|(H_{[n_1,n_2]}(x)-E)^{-1}(n_1,m)|=\l|\frac{P_{n_2-m}(x+(m+1)\alpha)}{P_n(x+n_1\alpha)}\r|,
\eeq
\beq
\label{eq_cramer_2}
|(H_{[n_1,n_2]}(x)-E)^{-1}(m,n_2)|=\l|\frac{P_{m-n_1}(x+n_1\alpha)}{P_n(x+n_1\alpha)}\r|
\eeq
Recall that $m\in \Z$ is $(\mu,n)$-regular if there exists an integer interval $[n_1;n_2]$ (a good interval around $m$) with 
$$
m\in [n_1;n_2],\quad n_2=n_1+n-1,\quad |m-n_j|\ge \sigma n=\sigma|n_2-n_1+1|, ,\quad j=1,2,
$$
and
$$
|(H_{[n_1,n_2]}-E)^{-1}(n_j,m)|\le e^{-\mu|m-n_j|},\quad j=1,2.
$$
The main idea of \cite{Lana94,Lana99} is that, for a fixed $m\in \Z$, one can search for good intervals among translations of a fixed interval of length $n$ with above properties. Since the translations correspond to different values of $x+n_1\alpha$, one can use large deviation theorems like Theorem \ref{th_ldt} in order to avoid small denominators. This step is usually the most model-specific.

We will start from the following lemma, see \cite{Olympiad}. We include the proof from \cite{Olympiad} for the convenience of the reader, since an English translation may not be easily available.
\begin{lemma}
\label{lemma_polynomial}
Let $p=x^n+a_{n-1}x^{n-1}+\ldots+a_0$ be a polynomial with real coefficients, and $x_0,\ldots,x_{n}\in \R$ with $|x_i-x_j|\ge d$ for $i\neq j$. Then, at least one of the above points satisfies
\beq
\label{eq_polynomial_factorial}
|p(x_j)|\ge (d/2)^n n!
\eeq
\end{lemma}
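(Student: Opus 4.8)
The plan is to exploit Lagrange interpolation at the $n+1$ given points. Since $p$ has degree exactly $n$ with leading coefficient $1$, the classical interpolation formula gives an exact representation of the leading coefficient:
\[
1 = \sum_{j=0}^{n} \frac{p(x_j)}{\prod_{i\neq j}(x_j-x_i)}.
\]
This identity is what drives everything. I would verify it by writing the Lagrange interpolating polynomial through the points $(x_j,p(x_j))$; because this polynomial has degree $\le n$ and agrees with $p$ at $n+1$ distinct nodes, it equals $p$, and comparing the coefficients of $x^n$ on both sides yields the displayed formula.

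Next I would estimate the sum crudely. By the triangle inequality,
\[
1 \le \sum_{j=0}^{n} \frac{|p(x_j)|}{\prod_{i\neq j}|x_j-x_i|} \le \Big(\max_{0\le j\le n}|p(x_j)|\Big)\sum_{j=0}^{n}\frac{1}{\prod_{i\neq j}|x_j-x_i|}.
\]
So it suffices to show $\sum_{j=0}^{n}\prod_{i\neq j}|x_j-x_i|^{-1}\le (2/d)^n/n!$, i.e.\ that each term is at most $\tfrac1{n!}(2/d)^n$ after summing — more precisely I will bound the whole sum. The key point is the separation hypothesis: for a fixed $j$, order the distances $|x_j-x_i|$ over $i\neq j$; since all $n+1$ points are $d$-separated, among the $n$ values $\{|x_j-x_i|: i \neq j\}$ at most one can lie in each half-open interval $[d,2d),[2d,3d),\dots$ on each side of $x_j$, so after sorting they are bounded below by $d,d,2d,2d,3d,3d,\dots$ — hence $\prod_{i\neq j}|x_j-x_i|\ge d^n \lfloor n/2\rfloor!^2 \cdot(\text{parity correction})$, which in any case is $\ge d^n\big(\lceil n/2\rceil!\big)^2/(\text{small})$. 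A cleaner route: pick the index $j^\star$ maximizing $|p(x_{j^\star})|$ is not needed; instead bound $\sum_j \prod_{i\ne j}|x_j-x_i|^{-1}$ directly. The tidiest argument is: for each $j$ the $n$ numbers $|x_j-x_i|/d$ are distinct positive reals (they need not be integers, but the $d$-separation forces at most one of them in each interval $(k-1,k]$... ) — rather than belabor this, I will use that at least $\lfloor n/2 \rfloor$ of them are $\ge d$, at least $\lfloor n/2\rfloor - 1$ are $\ge 2d$, etc., giving $\prod_{i\neq j}|x_j - x_i| \ge (d/2)^n \, n!$ is too strong per-term; what is true is the summed bound.

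\textbf{The cleanest execution.} Actually the sharpest elementary fact is this: for any $j$, the product $\prod_{i\neq j}|x_j-x_i|$ is at least $\big(d/2\big)^n n!$ does \emph{not} hold termwise, but the \emph{sum} $\Sigma:=\sum_{j}\prod_{i\ne j}|x_j-x_i|^{-1}$ does satisfy $\Sigma \le (2/d)^n/n!$. To see this, rescale so $d=1$ (replace $x_j$ by $x_j/d$; $p$ scales accordingly), and prove $\Sigma\le 2^n/n!$ when all gaps are $\ge 1$. Sort $x_0<\dots<x_n$. For each $j$, $\prod_{i\ne j}|x_j-x_i|\ge \prod_{i<j}(j-i)\prod_{i>j}(i-j)=j!\,(n-j)!$, since consecutive sorted points are $\ge1$ apart. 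Hence $\Sigma\le \sum_{j=0}^n \frac{1}{j!(n-j)!}=\frac{1}{n!}\sum_{j}\binom nj=\frac{2^n}{n!}$. Plugging back, $1\le \max_j|p(x_j)|\cdot \frac{2^n}{d^n\,n!}$, which rearranges to $\max_j |p(x_j)|\ge (d/2)^n n!$, as claimed.

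\textbf{Main obstacle.} There is essentially no deep obstacle; the only place to be careful is the lower bound $\prod_{i\ne j}|x_j-x_i|\ge j!(n-j)!d^n$ after sorting — one must use that the points are sorted and $d$-separated so that the $i$-th point below $x_j$ is at distance $\ge (j-i)d$ and symmetrically above. Everything else is the interpolation identity and a binomial-sum computation, so the write-up is short.
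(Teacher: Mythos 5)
Your final argument (``the cleanest execution'') is correct and is precisely the paper's own proof: the Lagrange interpolation identity for the leading coefficient, combined with the rescaling to $d=1$, sorting so that $\prod_{i\neq j}|x_j-x_i|\ge j!(n-j)!$, and the binomial sum $\sum_j\binom{n}{j}=2^n$. The exploratory digressions in the middle of your write-up (attempted termwise bounds, half-open interval counting) are unnecessary and can be deleted, but the core argument matches the paper exactly.
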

\begin{proof}
Let
$$
D_j:=\prod\limits_{i=0,i\neq j}^{n}(x_j-x_i).
$$
Without loss of generality, possibly after reordering the points and rescaling, one can assume that $d=1$ and $x_0<x_1<x_2\ldots<x_n$, which implies
$$
|x_i-x_j|\ge |i-j|,\quad |D_j|\ge j! (n-j)!.
$$
By comparing the leading coefficients in the usual Lagrange interpolation formula, we have 
$$
1=\l|\sum_{j=0}^{n}\frac{p(x_j)}{D_j}\r|\le \frac{\max_j|p(x_j)|}{n!}\sum_{j=0}^{n}\binom{n}{j}=\frac{2^n}{n!}\max_j|p(x_j)|.\,\qedhere
$$
\end{proof}
Recall that $P_n(x)$ is represented as a product
$$
P_n(x,E)=P_n^{<B_-}(x)P_n^{\mathrm{mid}}(x)P_n^{>B_+}(x).
$$
We will use Lemma \ref{lemma_polynomial} in order to avoid small values $P_n^-(x_j)$ (and, therefore, small values of $P_n(x_j)$) among certain collections of sampling points $x_j$.
\begin{lemma}
\label{lemma_sampling_points}
Let $I\subset \R$ be an interval with $|I|<(10\gamma)^{-1}$, and $x_0,\ldots,x_l\in I$ satisfy $|x_i-x_j|\ge d>0$ for $i\neq j$. Assume that $I$ contains at most $t\le l$ intersection points $\eqref{eq_intersection_points}$ for $P_n$, and all other intersection points $z$ of $P_n$ satisfy 
\beq
\label{eq_no_intersection_points_outside}
\dist(z,I)\ge \gamma^{-1}B_-.
\eeq 
Then, for at least one $j\in [1;l]$, we have
\beq
\label{eq_large_value}
|P_n^{<B_-}(x)|\ge \gamma^t (d/2)^t t!.
\eeq
\end{lemma}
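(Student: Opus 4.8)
The plan is to reduce to Lemma \ref{lemma_polynomial} applied to a suitable monic polynomial extracted from the factor $P_n^{<B_-}$. First I would recall that $P_n^{<B_-}(x)$ is, by \eqref{eq_def_p_minus} together with \eqref{eq_distances_lowerbound}, essentially a product of linear factors $\gamma(\Lambda_k(x-\beta_{n-k})-E)/\gamma$, and that the set of intersection points $\{z_k(E)\}$ is exactly where these factors vanish. Near $I$, by hypothesis, only $t$ of these vanishing points can lie inside $I$; all the others sit at distance at least $\gamma^{-1}B_-$ from $I$, which is precisely the distance needed to guarantee $\gamma\|x-z\|\ge B_-$ for $x\in I$, so those factors are \emph{not} among the factors of $P_n^{<B_-}(x)$ for any $x\in I$. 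Hence for $x\in I$ the factorization of $P_n^{<B_-}(x)$ picks up \emph{only} the (at most $t$) intersection points lying in $I$; the index set is the same for all $x\in I$ once $I$ is short enough (here $|I|<(10\gamma)^{-1}$ and the cutoff $B_-<1$ do the bookkeeping).

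The key step is then to write, for $x\in I$,
\[
P_n^{<B_-}(x)=\gamma^{t}\prod_{i=1}^{t}(x-z_{k_i}) \cdot u(x),
\]
where $z_{k_1},\dots,z_{k_t}$ are the intersection points in $I$ (with multiplicity, counted over the branches for which $\Lambda_{k}(x-\beta_{n-k})-E$ actually falls below $B_-$ on $I$), and $u$ accounts for the precise slopes of the $\gamma$-monotone branches $\Lambda_k$, which satisfy $u(x)\ge 1$ pointwise since each such linear factor has absolute value $\ge \gamma\|x-z_{k_i}\|$ and we are only keeping factors $<B_-<1$. More carefully, $|\Lambda_k(x-\beta_{n-k})-E|\ge \gamma|x-z_{k_i}|$ on $I$, so $|P_n^{<B_-}(x)|\ge \gamma^{t}|q(x)|$ where $q(x)=\prod_{i=1}^t (x-z_{k_i})$ is \emph{monic} of degree $t\le l$. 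Now apply Lemma \ref{lemma_polynomial} to $q$ with the $l+1\ge t+1$ points $x_0,\dots,x_l\in I$, which are $d$-separated: at least one $x_j$ satisfies $|q(x_j)|\ge (d/2)^t\, t!$. For that $j$ we get $|P_n^{<B_-}(x_j)|\ge \gamma^{t}(d/2)^t t!$, which is \eqref{eq_large_value}.

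The main obstacle — really the only subtlety — is controlling exactly which linear factors appear in $P_n^{<B_-}(x)$ as $x$ ranges over $I$: the index set $\{k: |\Lambda_k(x-\beta_{n-k})-E|<B_-\}$ depends on $x$, and a branch might enter or leave the ``$<B_-$'' regime within $I$. This is handled by the two-sided nature of \eqref{eq_p_minus_twosided}: on the right-hand side one may freely include \emph{more} factors of the form $\gamma\|x-z_k\|$ as long as each is $\le 1$, and the hypothesis \eqref{eq_no_intersection_points_outside} guarantees that any branch whose intersection point lies outside $I$ contributes a factor $\ge B_-$ that we simply do not include, so the monic polynomial $q$ we build from the in-$I$ intersection points gives a valid lower bound $|P_n^{<B_-}(x)|\ge \gamma^t|q(x)|$ for every $x\in I$. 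One should also note $t\le l$ is used precisely so that Lemma \ref{lemma_polynomial}, which needs $t+1$ separated points, applies with the given $x_0,\dots,x_l$; if strictly fewer than $t$ intersection points actually contribute one only improves the constant. Everything else is the direct substitution into Lemma \ref{lemma_polynomial} and the elementary bound $|\Lambda_k(x-\beta_{n-k})-E|\ge\gamma|x-z_k|$ from \eqref{eq_distances_lowerbound}.
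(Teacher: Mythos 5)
Your proof is correct and follows essentially the same route as the paper's: use the two-sided bound \eqref{eq_p_minus_twosided} together with \eqref{eq_no_intersection_points_outside} to reduce to the monic polynomial $q(x)=\prod_{j=1}^{t}(x-z_j)$ built from the intersection points inside $I$, and then apply Lemma \ref{lemma_polynomial} to $t+1$ of the $d$-separated sampling points. The paragraph you devote to the potential variation of the index set $\{k:|\Lambda_k(x-\beta_{n-k})-E|<B_-\}$ with $x$ is exactly the point the paper addresses with its remark that "the right hand side of the last inequality can have more factors than the left hand side," and you resolve it the same way (all factors are $\le 1$ on $I$ since $|I|<(10\gamma)^{-1}$, and no out-of-$I$ intersection points can enter).
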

\begin{proof}
Let $z_1,\ldots,z_t$ be the intersection points in $I$. From \eqref{eq_p_minus_twosided}, we have
\beq
\label{eq_p_lower}
|P_n^{<B_-}(x)|\ge \gamma^t \prod_{j=1}^t\|x-z_j\|=\gamma^t \prod_{j=1}^t|x-z_j|,\quad x\in I.
\eeq
Note that \eqref{eq_no_intersection_points_outside} implies that there is no contribution from intersection points outside of $I$. While $P_n^-(x)$ may have less than $t$ factors, the assumption $|I|<(10\gamma)^{-1}$ guarantees that each of the factors in the right hand of \eqref{eq_p_lower} does not exceed $1$, and therefore the inequalities still hold. Afterwards, \eqref{eq_large_value} follows from Lemma \ref{lemma_polynomial} applied to $p(x)=(x-z_1)\ldots(x-z_t)$.
\end{proof}
We are now ready to fix some parameters that are used in Theorem \ref{th_ldt} in order to start applying the above results to finding good intervals. As a reminder, $q(n)$ is the largest denominator among continued fraction approximants of $\alpha$ with
$$
n=s q(n)+r(n),\quad 0\le |r(n)|\le \sqrt{n},\quad \frac{s+r}{n}\le \frac{1}{q(n)}+\frac{|r(n)|}{n}\le \frac{1}{q(n)}+\frac{1}{\sqrt{n}},
$$
where the last inequalities follow from the first two properties. Recall that $C_d=100$ is a fixed large constant, and set
\beq
\label{eq_choices_B}
B_-(n):=q(n)^{-2 C_d+1},\quad B_+(n):=q(n),\quad B_+'(n):=q(n)+4.
\eeq
We are now ready to state the main theorem of this subsection, which relates avoiding small denominators by Lemma \ref{lemma_sampling_points} with finding regular intervals.
\begin{lemma}
\label{lemma_large_numerators}
Let $0\le n'\le n$, and suppose that 
\beq
\label{eq_intervals_included}
[h';h'+n']\subset [h;h+n]
\eeq
are two integer intervals. Then
$$
\l|\frac{P_{n'}^{>B_+'(n)}(x+h'\alpha)}{P_{n}^{>B_+(n)}(x+h\alpha)}\r|\le \l(\frac{q(n)+4}{q(n)}\r)^{n\l(\frac{1}{\sqrt{n}}+\frac{C(f,E)}{q(n)}\r)}\le e^{n\log(2)\l(\frac{1}{\sqrt{n}}+\frac{C(f,E)}{q(n)}\r)},
$$
where $C(f,E)$ is uniformly bounded on every compact energy interval.
\end{lemma}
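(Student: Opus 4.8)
The plan is to use the two-sided perturbation bounds \eqref{eq_p_plus_twosided_twosided} to rewrite the ratio as a product of factors each extremely close to $1$, and then to count those factors by the large-deviation bound \eqref{eq_p_plus_factors}. The entire purpose of the choice $B_+'(n)=B_+(n)+4=q(n)+4$ is \emph{cutoff alignment}: bounding the numerator $|P_{n'}^{>B_+'(n)}(x+h'\alpha)|$ from above by the right-hand side of \eqref{eq_p_plus_twosided_twosided} turns its index condition into $|f(x+k\alpha)-E|>B_+'(n)-2=q(n)+2$, while bounding the denominator $|P_{n}^{>B_+(n)}(x+h\alpha)|$ from below by the left-hand side of \eqref{eq_p_plus_twosided_twosided} turns its index condition into $|f(x+k\alpha)-E|>B_+(n)+2=q(n)+2$; the two products are then indexed by the \emph{same} threshold $q(n)+2$, one over the integers of the smaller interval and the other over those of the larger. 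As usual these sandwich inequalities are read with absolute values, which is legitimate since every relevant factor exceeds $1$; the $\pm 2$ comes from $|\lambda_k(H_m(y))-f(y+k\alpha)|\le\|\Delta\|\le 2$.

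Denote by $S'$ and $S$ the two index sets so obtained; by the inclusion \eqref{eq_intervals_included} one has $S'\subseteq S$. The numerator bound is $\prod_{k\in S'}(|f(x+k\alpha)-E|+2)$ and the denominator bound is $\prod_{k\in S}(|f(x+k\alpha)-E|-2)$; each factor $|f(x+k\alpha)-E|-2$ with $k\in S$ exceeds $q(n)\ge1$, so discarding those indexed by $S\setminus S'$ from the denominator only enlarges the quotient, and after cancelling the common indices one is left with
\begin{multline*}
\l|\frac{P_{n'}^{>B_+'(n)}(x+h'\alpha)}{P_{n}^{>B_+(n)}(x+h\alpha)}\r|\le\prod_{k\in S'}\frac{|f(x+k\alpha)-E|+2}{|f(x+k\alpha)-E|-2}\\=\prod_{k\in S'}\l(1+\frac{4}{|f(x+k\alpha)-E|-2}\r)\le\l(\frac{q(n)+4}{q(n)}\r)^{|S'|},
\end{multline*}
the final bound because $|f(x+k\alpha)-E|-2>q(n)$ for every $k\in S'$. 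Since $(q(n)+4)/q(n)\ge1$ and $|S'|\le|S|$, it remains to prove $|S|\le n\bigl(\tfrac1{\sqrt n}+\tfrac{C(f,E)}{q(n)}\bigr)$ with $C(f,E)$ uniformly bounded on compact energy intervals.

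For this I would apply \eqref{eq_p_plus_factors} with $q=q(n)$ and $B_+=q(n)$, after the shift $x\mapsto x+h\alpha$ — which leaves the discrepancy bound \eqref{eq_discrepancy4}, hence \eqref{eq_p_plus_factors}, unchanged; since $q(n)+2>q(n)-2$ this gives $|S|\le 4(s+|r|)+\tfrac{n}{\log q(n)}\int_0^1\log\bigl(|f(y)-E|+2\bigr)\,dy$. The integral is finite and uniformly bounded for $E$ in a compact set, because $\log(|f(y)-E|+2)\le\log(|E|+2)+\log(1+|f(y)|)$ and $\log(1+|f|)\in\rL^1(0,1)$ by \eqref{eq_f3}; combined with \eqref{eq_q_n_def_consequence}, that is $\tfrac{s+|r|}{n}\le\tfrac1{q(n)}+\tfrac1{\sqrt n}$, this produces a bound on $|S|$ of the required form. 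The first inequality of the lemma now follows from monotonicity of $t\mapsto\bigl((q(n)+4)/q(n)\bigr)^{t}$, and the second from $B_+(n)=q(n)>10$, which forces $(q(n)+4)/q(n)\le2$, hence $\bigl((q(n)+4)/q(n)\bigr)^{t}\le2^{t}=e^{t\log2}$.

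The only genuinely model-dependent step is the cutoff alignment of the first paragraph — the ``$+4$'' is precisely the slack needed to absorb the two $\pm2$ perturbations at once, in numerator and denominator — while everything afterwards is the already-established count \eqref{eq_p_plus_factors} plus routine book-keeping of constants, so I do not anticipate a serious obstacle. Note that the bound is informative only when $q(n)$ is large, which is exactly the regime in which the lemma is applied, and that hypothesis \eqref{eq_f3} is what makes \eqref{eq_p_plus_factors}, and hence the whole estimate, meaningful for potentially unbounded $f$.
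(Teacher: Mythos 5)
Your argument is essentially the paper's one-paragraph proof, written out in detail; the $\pm 2$ cutoff alignment explaining $B_+'(n)=B_+(n)+4$, the inclusion $S'\subseteq S$ from \eqref{eq_intervals_included}, discarding the extra denominator factors since each exceeds one, bounding each surviving ratio by $(q(n)+4)/q(n)$, and counting via \eqref{eq_p_plus_factors} are precisely the steps the paper invokes. One small remark, shared by your write-up and the paper's own statement: \eqref{eq_p_plus_factors} with $B_+=q(n)$ produces the Markov term $\frac{n}{\log q(n)}\int_0^1\log\bigl(|f(y)-E|+2\bigr)\,dy$, which is of the form $\frac{C(f,E)\,n}{\log q(n)}$ rather than $\frac{C(f,E)\,n}{q(n)}$, so the exponent as written in the lemma is not literally what this count delivers --- this is harmless downstream (both $1/q(n)$ and $1/\log q(n)$ tend to $0$, so $R(n)\to 0$ either way), but your closing claim that the count is ``of the required form'' glosses over the same discrepancy that the paper's terse proof does.
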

\begin{proof}
\eqref{eq_p_plus_factors}.
Due to \eqref{eq_intervals_included}, each diagonal entry of the operator $H_{[h',h'+n']}(x)$ is also a diagonal entry of $H_{[h,h+n]}(x)$. The estimate follows from the two-sided bound \eqref{eq_p_plus_twosided_twosided}, applied as a lower bound on the denominator and an upper bound on the numerator. Here, $\frac{q(n)+4}{q(n)}$ estimates the ratio between an upper and lower bound on the same eigenvalue, and the exponent comes from the estimate on the number of factors \eqref{eq_p_plus_factors}. Note that the denominator may have more factors, however, each additional factor is larger than $1$ and therefore can only make the estimate stronger.
\end{proof}
The following is the main application of the large deviation theorem and its corollaries, relating avoiding small denominators using Lemma \ref{lemma_sampling_points} with finding a good interval. Note that the expression for $R(n)$ below is not of any particular importance. However, it is essential that $R(n)\to 0$ as $n\to +\infty$, uniformly on any compact energy interval.
\begin{theorem}
\label{th_main_cramer}
Under the assumptions of Lemma $\ref{lemma_sampling_points}$, suppose, in addition, that $m\in \Z$, $n\in \N$, and
$$
x_j=x+h_j\alpha,\quad h_j\in \Z, \quad h_j+\sigma n\le m\le h_j+n-\sigma n,\quad j=0,\ldots,l.
$$
Then, for at least one $j\in [0;l]$, one has
\beq
\label{eq_cramer1_applied}
\l|\l(H_{[h_j,h_j+n]}(x)-E\r)^{-1}(m,h_j)\r|\le \frac{e^{-|m-h_j|L(E)}}{\gamma^t (d/2)^t t!}e^{nR(n)}=\frac{e^{-|m-h_j|L(E)(1-o(1))}}{\gamma^t (d/2)^t t!},
\eeq
\beq
\label{eq_cramer2_applied}
\l|\l(H_{[h_j,h_j+n]}(x)-E\r)^{-1}(m,h_j+n)\r|\le \frac{e^{-|m-(h_j+n)|L(E)}}{\gamma^t (d/2)^t t!}e^{nR(n)}=\frac{e^{-|m-(h_j+n)|L(E)(1-o(1))}}{\gamma^t (d/2)^t t!},
\eeq
where
$$
R(n)=\frac{\log 2}{\sqrt{n}}+\frac{C(f,E)}{q(n)}+2 L_{\mathrm{corr}}(E,q(n)^{-199},q(n))+400 C_d\frac{\log q(n)}{q(n)}+400 C_d\frac{|r(n)|\log q(n)}{n},
$$
and $C(f,E)$ is uniformly bounded on any compact energy interval.

If, in addition,
$$
\gamma^t (d/2)^t t!\ge e^{-n o(1)},
$$
then the interval $[h_j,h_j+n]$ is $(L(E)-o(1))$-regular.
\end{theorem}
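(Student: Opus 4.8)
The plan is to combine the Cramer's rule identities \eqref{eq_cramer_1}--\eqref{eq_cramer_2} with the three ingredients assembled above: the pointwise lower bound on $P_n^{<B_-(n)}$ coming from Lemma \ref{lemma_sampling_points}, the two-sided control of the ``middle'' factor from Corollaries \ref{cor_ldt_specific_s} and \ref{cor_uniform_upper}, and the numerator/denominator comparison of the ``large'' factors from Lemma \ref{lemma_large_numerators}. The first move is to fix the shift: since the hypotheses of Lemma \ref{lemma_sampling_points} are in force, there is an index $j$ with $|P_n^{<B_-(n)}(x+h_j\alpha)|\ge\gamma^t(d/2)^t\,t!$, and this is the $j$ in the statement. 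Note that $P_n(x+h_j\alpha)$ is the common denominator appearing in both \eqref{eq_cramer_1} and \eqref{eq_cramer_2} for the interval $[h_j,h_j+n]$, so it is enough to bound this one determinant from below and the two numerators from above.

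For the denominator I would write $P_n=P_n^{<B_-(n)}\,P_n^{\mathrm{mid}}\,P_n^{>B_+(n)}$ with the cutoffs \eqref{eq_choices_B}. At $x+h_j\alpha$: the first factor is $\ge\gamma^t(d/2)^t t!$ by the choice of $j$; the last factor has modulus $\ge 1$, since each of its factors has modulus exceeding $B_+(n)>1$; and Corollary \ref{cor_ldt_specific_s} gives $\log|P_n^{\mathrm{mid}}(x+h_j\alpha)|\ge nL(E)-\mathcal{E}_{\mathrm{den}}(n)$, where $\mathcal{E}_{\mathrm{den}}(n)$ is one of the finitely many contributions that will be collected into $nR(n)$. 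For the numerator of \eqref{eq_cramer_1}, namely $P_{n'}(x+(m+1)\alpha)$ with $n'=(h_j+n)-m$ and $[m+1,h_j+n]\subset[h_j,h_j+n]$, I would peel off the factor $P_{n'}^{>B_+'(n)}$ with the slightly enlarged cutoff $B_+'(n)=B_+(n)+4$ from \eqref{eq_choices_B}, bound the product of the two remaining factors by $\exp\{n'L(E)(1+o(1))\}$ via the large deviation estimate (Corollary \ref{cor_uniform_upper}) applied at the appropriate scale, and bound $|P_{n'}^{>B_+'(n)}(x+(m+1)\alpha)|/|P_n^{>B_+(n)}(x+h_j\alpha)|$ by Lemma \ref{lemma_large_numerators}. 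Dividing the numerator bounds by the denominator bound and cancelling the two ``$>B_+$'' factors yields \eqref{eq_cramer1_applied}: the factor $e^{-|m-h_j|L(E)}$ comes from $n'-n=-|m-h_j|$, and all the remaining exponents assemble into a single $nR(n)$ with $R(n)\to 0$. The bound \eqref{eq_cramer2_applied} is obtained identically from \eqref{eq_cramer_2}, whose numerator $P_{m-h_j}(x+h_j\alpha)$ has length $m-h_j\ge\sigma n$, again lies inside $[h_j,h_j+n]$, and satisfies $(m-h_j)-n=-|m-(h_j+n)|$.

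To deduce regularity, observe that $m-h_j\ge\sigma n$ and $(h_j+n)-m\ge\sigma n$ by hypothesis, so $[h_j,h_j+n]$ has the geometry required in \eqref{eq_length_regular_interval}. Under the additional assumption $\gamma^t(d/2)^t t!\ge e^{-n\,o(1)}$, the right-hand sides of \eqref{eq_cramer1_applied}--\eqref{eq_cramer2_applied} become at most $e^{-|m-n_j|L(E)+n(R(n)+o(1))}$; since $n\le|m-n_j|/\sigma$, the exponent equals $-|m-n_j|(L(E)-o(1))$, which is precisely \eqref{eq_green_good1} with $\mu=L(E)-o(1)$. Hence $[h_j,h_j+n]$ is $(L(E)-o(1))$-regular.

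The step I expect to be the main obstacle is the treatment of the numerator $P_{n'}$: its length $n'$ is a genuine sub-length of $n$, lying in $[\sigma n,(1-\sigma)n]$ and in general not conveniently commensurable with the continued fraction approximant used for the outer interval, so one has to run the large deviation machinery at the scale appropriate for the numerator and then reconcile the resulting cutoff parameters with those used for the denominator. This is exactly what forces the enlarged cutoff $B_+'(n)=B_+(n)+4$: passing from the sub-block on $[m+1,h_j+n]$ to the full block on $[h_j,h_j+n]$ costs a rank-$O(1)$ perturbation, and the extra ``$+4$'' in the cutoff is what allows Lemma \ref{lemma_large_numerators} to absorb it. Everything else is bookkeeping --- collecting $\mathcal{E}_{\mathrm{den}}$, the numerator error, the Koksma/discrepancy terms proportional to $\log(B_+/B_-)$, and the Lemma \ref{lemma_large_numerators} ratio into an explicit $R(n)$, and checking $R(n)\to 0$ uniformly on compact energy sets --- together with the single conceptual point that, because $|m-n_j|\ge\sigma n$, an error of size $n\cdot o(1)$ in the exponent is the same as an error of size $|m-n_j|\cdot o(1)$, which is what upgrades the determinant bounds to $(L(E)-o(1))$-regularity.
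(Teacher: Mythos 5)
Your proposal follows the paper's proof essentially verbatim: same choice of the favorable shift $j$ from Lemma \ref{lemma_sampling_points}, same three-way factorization of numerator and denominator into $P^{<B_-}\cdot P^{\mathrm{mid}}\cdot P^{>B_+}$, same three lemmas (Corollary \ref{cor_ldt_specific_s} and Corollary \ref{cor_uniform_upper} for the middle factors, Lemma \ref{lemma_large_numerators} for the $>B_+$ ratio), and the same final observation that $|m-n_j|\ge\sigma n$ upgrades the $n\cdot o(1)$ errors to $|m-n_j|\cdot o(1)$. One small expository point: having decided to compare $P_{n'}^{>B_+'}/P_n^{>B_+}$ via Lemma \ref{lemma_large_numerators}, the earlier remark that $|P_n^{>B_+}|\ge 1$ is not used (and would double-count), and the $+4$ in $B_+'$ stems from the $\pm 2$ slack in the two-sided eigenvalue bound \eqref{eq_p_plus_twosided_twosided} rather than from a rank-$O(1)$ perturbation, but neither point affects the validity of the argument.
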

\begin{proof}
We will only estimate \eqref{eq_cramer1_applied}, since \eqref{eq_cramer2_applied} is similar. From the Cramer's rule \eqref{eq_cramer_1}, we have
\begin{multline}
\label{eq_many_cramer}
\l|\l(H_{[h_j,h_j+n]}(x)-E\r)^{-1}(m,h_j)\r|=\frac{|P_{m-h_j}(x+h_j\alpha)|}{|P_n(x+h_j\alpha)|}\\
=\l|\frac{P_{m-h_j}^{<B_-(n)}(x+h_j\alpha)P_{m-h_j}^{\mathrm{mid}}(x+h_j\alpha)}{P_n^{\mathrm{mid}}(x+h_j\alpha)}\r|
\cdot\l|\frac{1}{P_n^{<B_-(n)}(x+h_j\alpha)}\r|\cdot \l|\frac{P_{m-n_1}^{>B_+'(n)}(x+h_j\alpha)}{P_{n}^{>B_+(n)}(x+h_j\alpha)}\r|.
\end{multline}
In the first factor, the numerator is estimated from above using Corollary \ref{cor_uniform_upper}, and the denominator from below using Corollary \ref{cor_ldt_specific_s} from the large deviation theorem \ref{th_ldt}. The middle factor is estimated from below using Lemma \ref{lemma_sampling_points}. Finally, the last factor is estimated from above using Lemma \ref{lemma_large_numerators}. Here, in the definition of $P_{m-h_j}^{\mathrm{mid}}(x+h_j\alpha)$, it is assumed that the as the upper cut-off point is $B_+'(n)$. 

All correcting factors are gathered in $R(n)$, which is written in a somewhat weakened form in order to provide an estimate both on \eqref{eq_cramer1_applied} and \eqref{eq_cramer2_applied}. Note that, in the last equalities of both \eqref{eq_cramer1_applied}, \eqref{eq_cramer2_applied}, we used the fact that $\dist(m,\partial[h_j;h_j+n])\ge \sigma n$. The ending conclusion follows directly from the definition of regularity.
\end{proof}

\section{Proofs of the main lemmas}\label{proofs}
We start from some overview of the relations between proofs of the main lemmas.
\begin{enumerate}
\item Recall that the definition of a regular point includes a parameter $\sigma>0$, see \eqref{eq_length_regular_interval}. In all proofs, we will assume that some small value (say, $\sigma=1/5$) is fixed. One also has a constant $C_d=100$ (say) determining the difference between the Diophantine and Liouville transitions.
\item As mentioned before, the proof of Proposition \ref{prop_main_basecase} is, essentially, contained in \cite{JK,Kach}, since it only requires LDT on scales of the form $n=q_k$. We include the complete proof, since the next lemma \ref{lemma_diophantine_diophantine} will be based on a somewhat related argument. 
\item Lemmas \ref{lemma_liouville_diophantine} and \ref{lemma_nonresonant_liouville} are the most difficult, since the estimates in Theorem \ref{th_ldt} are the weakest (one has $s\gg 1$, and therefore $q\ll n$). The main novel idea is to get on a somewhat smaller scale $2s'q_{k-k_0}$, where the sampling points of the trajectory naturally split into clusters. Then, one can use Lemma \ref{lemma_sampling_points} locally in a neighborhood of one cluster.
\item With some care, one could combine Lemmas \ref{lemma_liouville_diophantine} and \ref{lemma_nonresonant_liouville} into one unified regime, however, we believe that the reader may benefit from observing the main idea in a somewhat more simple setting of Lemma \ref{lemma_liouville_diophantine} first. Additionally, such modification would create an additional regime which will need to be covered by an extended version of Proposition \ref{prop_main_basecase}.
\end{enumerate}

\subsection{The base Diophantine case: proof of Proposition \ref{prop_main_basecase}} In the Diophantine case, we assume $q_{k+1}\le q_k^{C_d}$. Assume first that $q_k$ is even, and consider the following set of integers:
\beq
\label{eq_definition_J_base}
J:=\left([-q_k/2;-1]-\lfloor 2\sigma q_k\rfloor\right)\cup \left([-q_k/2;0]-\lfloor 2\sigma q_k\rfloor+m\right).
\eeq
For odd $q_k$, replace $-q_k/2$ by $-\lfloor q_k/2\rfloor$ and add one extra point to any of the subsets (by increasing the length of any of the intervals by $1$). As a consequence, if $m>q_k/2+1$, the set $J$ contains $q_k+1$ distinct integer points. Assuming additionally that $m\le q_{k+1}-q_k/2$, Proposition \ref{prop_rotation_trajectory} together with the original assumption implies
$$
\|(j_1-j_2)\alpha\|\ge \frac{1}{2q_{k+1}}\ge \frac12 q_k^{-C_d},\quad j_1,j_2\in J,\,\,j_1\neq j_2;\quad q_k/2+1<m<q_{k+1}-q_k/2.
$$
In other words, for every $x\in \R$, the points $\{x+j\alpha\colon j\in J\}$ are $\frac12 q_k^{-C_d}$-separated modulo $1$. Consider the determinant $P_{q_k}(x)$. Since it has at most $q_k$ intersection points, there is at least one value $j\in J$ with $x+j\alpha$ such that the interval
$$
I:=\l[x+j\alpha-\frac14 q_k^{-C_d};x+j\alpha+\frac14 q_k^{-C_d}\r]
$$
does not contain any intersection points of $P_{q_k}$ modulo $1$. Apply Lemma \ref{lemma_sampling_points} in the trivial case, where $\ell=1$ and $x_0$ is the only sampling point, and $t=0$ (no intersection points), which guarantees
$$
P_{q_k}^{<B_-(q_k)}(x+j\alpha)\ge 1
$$
(in fact, there will be simply no factors in $P_{q_k}^{<B_-(q_k)}(x+j\alpha)$). Theorem \ref{th_main_cramer} implies that the interval 
$$
Q=[j;j+q_k-1]
$$ 
is $(L(E)-o(1))$ good: that is, 
\begin{eqnarray}
\label{eq_green_good_temp1}
|(H_{Q}(x)-E)^{-1}(j,m')|\le e^{-(L(E)-o(1))|m'-j|};\\
|(H_{Q}(x)-E)^{-1}(m',j+q_k-1)|\le e^{-(L(E)-o(1))|j+q_k-1-m'|}
\end{eqnarray}
for all $m'$ satisfying
$$
m'\in Q,\quad \dist(m',\partial Q)\ge \sigma |Q|=\sigma q_k.
$$

If $j$ is was the first interval of $J$ from \eqref{eq_definition_J_base}, it would imply that $Q$ is good with respect to the origin (so that the above estimates hold with $m'=0$, making the origin $(L(E)-o(1),q_k)$-regular). However, the latter is not allowed by Proposition \ref{prop_origin}. As a consequence, $j$ must belong to the second interval of $J$. Under the above assumption on $\sigma$, the interval $Q=[j;j+q_k-1]$ will be $(L(E)-o(1))$-good with respect to $m$, and therefore $m$ will be $(L(E)-o(1),q_k)$-regular as stated.\,\,\qed

Note that, if the next transition from $q_{k+1}$ to $q_{k+2}$ is also Diophantine, one can push $m$ all the way to $q_{k+2}-q_k/2$, since the distance between $j_1\alpha$ and $j_2\alpha$ modulo $1$ will still have a polynomial lower bound in $q_k$. Therefore, one does have ample overlap between scales and the above argument (perhaps under a modification of the choice of $B_1(n)$ in \eqref{eq_choices_B}) is indeed sufficient to obtain localization for Diophantine $\alpha$, as done in \cite{JK,Kach}.

\subsection{Diophantine to Diophantine transition: proof of Lemma \ref{lemma_diophantine_diophantine}}
As a reminder, we have fixed a small number $\tau>0$ and choose $\tau_k$ such that $b_k:=\tau_k q_{k}$ is the closest integer to $\tau q_{k}$. Find the largest $q_{k-k_0}$ with $2 q_{k-k_0}\le b_k$, and  then the largest $s>0$ with $2sq_{k-k_0}\le b_k$. In Lemma \ref{lemma_diophantine_diophantine} we assume $s\le q_{k-k_0}^{C_d}$. In this case, let
$$
J:=\left([-s q_{k-k_0};-1]-\lfloor s q_{k-k_0}/2\rfloor\right)\cup \left([-s q_{k-k_0};-1]-\lfloor s q_{k-k_0}/2\rfloor+m\right).
$$
For $m\ge s q_{k-k_0}$, the set $J$ contains $2 s q_{k-k_0}$ distinct integer points. The rest of the argument repeats that from base Diophantine case from Proposition \ref{prop_main_basecase}: that is, the corresponding points on the circle will have the following separation property:
$$
\|(j_1-j_2)\alpha\|\ge \frac{1}{2q_{k+1}}\ge \frac12 q_k^{-C_d},\quad j_1,j_2\in J,\,\,j_1\neq j_2;\quad s q_{k-k_0}\le m\le q_k-s q_{k-k_0}-1,
$$
and we have for some $j\in J$, as above,
$$
P^{<B_-(n)}_{n}(x+j\alpha)\ge 1,\quad n=2sq_{k-k_0}-1,
$$
which implies that the conclusion of Theorem \ref{th_main_cramer} also holds, making an interval
$$
Q=[j,j+n-1]=[j,j+2s q_{k-k_0}-2]
$$
good. In view of Proposition \ref{prop_origin}, the corresponding $j$ has to be from the second interval of $J$. Since $sq_{k-k_0}<\tau q_k$, we have that $m$ with $\tau q_k\le m \le (1-\tau) q_k$ is $(L(E)-o(1),2 s q_{k-k_0}-1)$-regular.\footnote{Note that the above considerations also include the range of $m$ already covered in Proposition \ref{prop_main_basecase}. One could also obtain it by the means of that proposition alone, by considering the transition from $q_{k-k_0}$ to $q_{k+1}$ directly and using the estimate $q_{k+1}\le q_{k-k_0}^{C_d^2}$. In this case we would have good intervals of length $q_{k-k_0}$ which would also be sufficient for the purposes of localization. This would also involve a minor change in \eqref{eq_choices_B} replacing $C_d$, say, by $C_d^3$}\,\,\qed

\subsection{Liouville to Diophantine transition: proof of Lemma \ref{lemma_liouville_diophantine}} Define $\tau_k$, $b_k$, $k_0$, $q_{k-k_0}$, and $s$ as above, and consider now the case $s>q_{k-k_0}^{C_d}$. Let $s'=\lfloor s/10\rfloor$. Define
$$
J:=\left([-s' q_{k-k_0};-1]-\lfloor s' q_{k-k_0}/2\rfloor\right)\cup \left([-s' q_{k-k_0};-1]-\lfloor s' q_{k-k_0}/2\rfloor+m\right).
$$
For $A\subset \Z$, we will use the notation
$$
A\alpha:=\{m\alpha\colon m\in A\}.
$$
Sometimes, $A\cdot\alpha$ will be used instead for typographical purposes. Let us discuss in more detail the distribution of the $2s'q_{k-k_0}$ points $J\alpha$. Let
$$
J_0:=[0;q_{k-k_0}-1],
$$
$$
J_1:=[0;s' q_{k-k_0}-1]=\cup_{r=0}^{s'-1}(J_0+r q_{k-k_0}).
$$
Note that $J$ is the union of two translated copies of $J_1$:
\beq
\label{eq_definition_j_lio_dio}
J=(J_1-s'q_{k-k_0}-\lfloor s' q_{k-k_0}/2\rfloor)\cup(J_1-s'q_{k-k_0}-\lfloor s' q_{k-k_0}/2\rfloor+m).
\eeq
Proposition \ref{prop_rotation_trajectory} implies that $J_0\alpha$ is a small perturbation of the set 
$$
J_0\cdot\frac{p_{k-k_0}}{q_{k-k_0}}=J_0\cdot \frac{1}{q_{k-k_0}}
$$
modulo 1 (note that the factor $p_{k-k_0}$ only acts as a permutation modulo 1). More precisely, for every point  $y\in J_0\alpha$ there exists a unique $j\in [0;q_{k-k_0}-1]$ with
\beq
\label{eq_lana_C}
\left\|y-\frac{j}{q_{k-k_0}}\r\|<\frac{1}{q_{k-k_0+1}}\le \frac{1}{q_{k-k_0}^{C_d}}
\eeq
By maximality of $q_{k-k_0}$, we have $10 s' q_{k-k_0}\le s q_{k-k_0}\le q_{k-k_0+1}$, and therefore,
\beq
\label{eq_lana_D}
\|r q_{k-k_0}\|\le r\|q_{k-k_0}\|\le \frac{(s'-1)}{q_{k-k_0+1}}\le \frac{1}{10 q_{k-k_0}},\quad 0\le r\le s'-1.
\eeq
As a consequence, the set $J_1\alpha$ is contained in a $\frac{1}{9 q_{k-k_0}}$-neighborhood of the set $J_0\cdot \frac{1}{q_{k-k_0}}$ modulo 1. Using \eqref{eq_lana_C} and \eqref{eq_lana_D}, one can restate it as
\beq
\label{eq_gapped_intervals}
J_1\alpha\subset \bigcup_{j=0}^{q_{k-k_0}-1}\l[\frac{j}{q_{k-k_0}}-\frac{1}{9 q_{k-k_0}};\frac{j}{q_{k-k_0}}+\frac{1}{9 q_{k-k_0}}\r]\,\, \mathrm{mod}\,\,1.
\eeq
The set in the right hand side is a union of $q_{k-k_0}$ intervals of lengths $\frac{2}{9 q_{k-k_0}}$, equally spaced on the circle, with gaps of size $\frac{7}{9 q_{k-k_0}}$ between adjacent intervals. The same is true for each of the sets $(J_1-s'q_{k-k_0}-\lfloor s' q_{k-k_0}/2\rfloor)\alpha$ and $(J_1-s'q_{k-k_0}-\lfloor s' q_{k-k_0}/2\rfloor+m)\alpha$, associated to the left and right halves of $J$, since those are translations of $J_1$. Therefore, by \eqref{eq_definition_j_lio_dio}, $J\alpha$ is contained in a union of two translated copies of the collection of intervals in the right hand side of \eqref{eq_gapped_intervals}, where everything is still considered modulo 1. After combining each interval from the first copy with the closest interval from the second copy, one can see that $J\alpha$ is contained in a union of $q_{k-k_0}$ equally spaced intervals $I_i$, $i=1,\ldots,q_{k-k_0}$ of length (at most) $\frac{13}{18 q_{k-k_0}}$, with distances (at least) $\frac{5}{18 q_{k-k_0}}$ between adjacent intervals. 

Note that the distances $\frac{5}{18q_{k-k_0}}$ and $\frac{13}{18q_{k-k_0}}$ correspond to the extreme case when the intervals associated to $(J_1-s'q_{k-k_0}-\lfloor s' q_{k-k_0}/2\rfloor)\alpha$ do not overlap with intervals associated to $(J_1-s'q_{k-k_0}-\lfloor s' q_{k-k_0}/2\rfloor+m)\alpha$ and together form a family of $2 q_{k-k_0}$ equally spaced intervals of length $\frac{2}{9 q_{k-k_0}}$ (in this case, the nearest intervals should be chosen, say, clockwise). A deviation from this scenario would rotate one family of intervals towards the other, making the size of merged intervals smaller and the gaps between them larger.

Assuming 
$$
s' q_{k-k_0}\le m\le q_k-s' q_{k-k_0}-1,
$$
all points of $J$ will be distinct, each of the intervals $I_i$ will contain exactly $2s'$ points of $J\alpha$, and, by \ref{eq_trajectory_estimates1} and \eqref{eq_trajectory_estimates2}, the distances between any two different points of $J\alpha$ will be bounded from below by $\frac{1}{2 q_k}$. The points $J\alpha$ will be referred to as {\it sampling points}, and the above intervals $I_i$ will be called {\it sampling intervals}.

We now have all ingredients in place to apply Lemma \ref{lemma_sampling_points}. 
 Consider the determinant $P_{2 s' q_{k-k_0}-1}(x)$. It has at most $2 s' q_{k-k_0}-1$ intersection points modulo 1. Note that the $\frac{1}{9 q_{k-k_0}}$-neighborhoods of the sampling intervals are disjoint. As a consequence, there exists a sampling interval $I$ such that its $\frac{1}{9 q_{k-k_0}}$-neighborhood $I'$ contains at most $2s'-1$ intersection points. By construction, $I$ contains $2s'$ sampling points with distances between adjacent points at least $\frac{1}{2 q_k}$. Apply Lemma \ref{lemma_sampling_points} with $x_0,\ldots,x_{2s'-1}$ being those sampling points, the interval $I$ as above, and $d=\frac{1}{2 q_k}$. Note that the choice of $B_-(n)$ with $n=2 s' q_{k-k_0}-1$ in \eqref{eq_choices_B} agrees with the assumptions of the lemma. For some $j\in J$ (with $j\alpha\in I$ modulo 1), and large $k$, this will provide
\beq
\label{eq_verbatim}
|P_{n}^{<B_-(n)}(x+j\alpha)|\ge \gamma^{2s'-1}\l(\frac{1}{4q_k}\r)^{2s'-1} (2s'-1)!,\quad n=2 s' q_{k-k_0}-1.
\eeq
Recall that, by the definition of $s'$, we have $q_k\le \frac{20 s' q_{k-k_0}}{\tau}$. Using the estimate $m!\ge m^m e^{-m}$ and absorbing various terms into $e^{-no(1)}$, we arrive to
\begin{multline*}
|P_{n}^{<B_-(n)}(x+j\alpha)|\ge \frac{(2s')!}{q_k}e^{-n o(1)}\ge \frac{(2s')!}{(2s')^{2s'}}\l(\frac{\tau}{10 q_{k-k_0}}\r)^{2s'}e^{-n o(1)}\ge e^{-n o(1)}.
\end{multline*}
Similarly to the previous cases, Theorem \ref{th_main_cramer} implies that the interval $[j,j+n-1]$ is $(L(E)-o(1))$-regular. Again, if $j$ belonged to the first half of $J$ in \eqref{eq_definition_j_lio_dio}, that would contradicts Proposition \ref{prop_origin} for large $k$. For $\sigma<1/5$, regularity of the interval $[j,j+n-1]$ for any $j$ in the second half of $J$ implies regularity of $m$. Since $10s' q_{k-k_0}<\tau q_k$, we have established $(L(E)-o(1),2s'q_{k-k_0}-1)$ regularity of all $m\in [\tau q_k;(1-\tau)q_k]$.\,\,\qed

\subsection{The Liouville transition: non-resonant regions, some preliminaries}
\label{subsection_nonresonant_preliminaries}
In this subsection, we will introduce some notation and preliminary estimates which will be used Lemmas \ref{lemma_nonresonant_diophantine} and \ref{lemma_nonresonant_liouville}. As a reminder, in both cases we have the Liouville transition $q_{k+1}>q_k^{C_d}$. Earlier, we have fixed a small $\tau>0$ and chose $\tau_k$ such that $b_k:=\tau_k q_{k}$ is the closest integer to $\tau q_{k}$. Integer points $m\in [-q_{k+1}/10;q_{k+1}/10]$ are called {\it resonant} if $\dist(m,q_k \Z)\le b_k$. Otherwise, they are called {\it non-resonant}. %For $\ell\in \Z$, let
%$$
%R_\ell:=[\ell q_n-b_n,\ell q_n+b_n].
%$$
Similarly to the previous cases, find the largest $q_{k-k_0}$ with $2 q_{k-k_0}\le \dist(m,q_k\Z)$. Then, find the largest $s$ with the similar property $2 sq_{k-k_0}\le \dist(m,q_k\Z)$. From the construction, we have
\beq
\label{eq_scales_relations}
\frac {b_k}{2}=\frac{\tau_k q_k}{2}<2s q_{k-k_0}\le \dist(m,q_k\Z)\le 2 q_{k-k_0+1}\le 2 q_k.
\eeq
Lemmas \ref{lemma_nonresonant_diophantine} and \ref{lemma_nonresonant_liouville} will correspond, respectively, to the cases  $s\le q_{k-k_0}^{C_d}$ (Diophantine to non-resonant transition) and $s>q_{k-k_0}^{C_d}$ (Liouville to non-resonant transition). In both cases, the arguments will be parallel to those in Lemmas \ref{lemma_diophantine_diophantine} and \ref{lemma_liouville_diophantine}, respectively. Let
$$
J_1:=[0;s q_{k-k_0}-1],
$$
$$
J:=-\lfloor sq_{k-k_0}/2\rfloor-s q_{k-k_0}+\l(J_1\cup(J_1+m)\r).
$$
By construction and the definition of $s$, $J$ contains $2s q_{k-k_0}$ distinct integer points. Let us consider separation properties for the points of $J\alpha$ modulo 1. Since $m$ is non-resonant and $|m|\le q_{k+1}/10$, without loss of generality $m=\ell q_k+m'$, where $\tau q_k\le m'\le (1-\tau) q_k$ and $0\le \ell q_k \le q_{k+1}/10$. As a consequence, 
\beq
\label{eq_lalpha}
\|\ell q_k\alpha\|=\ell \|q_k\alpha\|\le \frac{1}{10 q_k}.
\eeq
Note that the set $J_1\cup(J_1+m')$ still contains $2 s q_{k-k_0}$ distinct points, all of which are inside of an integer interval of length not exceeding $q_k$. As a consequence of Proposition \ref{prop_rotation_trajectory}, any two distinct points of $(J_0\cup(J_0+m'))\alpha$ are at least $\frac{1}{2 q_k}$-separated modulo $1$. Due to \eqref{eq_lalpha}, the change from $m$ to $m'$ can translate some of the points of $(J_1\cup(J_1+m'))\alpha$ at most by $\frac{1}{10 q_k}$. Thus, the points $J\alpha$ are (at least) $\frac{1}{3q_k}$-separated modulo 1.
\begin{remark}
\label{rem_sprime}
The same considerations would have worked with $s$ replaced by $s'=\lfloor s/10\rfloor$.
\end{remark}
\subsection{Diophantine to non-resonant transition: proof of Lemma \ref{lemma_nonresonant_diophantine}}
In the above notation, we have $s\le q_{k-k_0}^{C_d}$. For large $k$ depending on $\tau$, one can restate the conclusion of the previous section as that the points of $J\alpha$ are $q_{k-k_0}^{-C_d-2}$-separated modulo $1$. 

Then one can proceed similarly to Proposition \ref{prop_main_basecase} and Lemma \ref{lemma_diophantine_diophantine}: since $P_{2 s q_{k-k_0}-1}$ has at most $2 s q_{k-k_0}-1$ intersection points, there is a point $j\in J$ such that the $\frac13 q_{k-k_0}^{-C_d-2}$-neighborhood of $j\alpha$ does not contain any intersection points modulo 1. As a consequence, a similar application of Lemma \ref{lemma_sampling_points} with $l=t=0$ (no intersection points, one sampling point) will imply
$$
P_n^{<B_-(n)}(x+j\alpha)\ge 1,\quad n=2s q_{k-k_0}-1.
$$
After a similar application of Theorem \ref{th_main_cramer} and Proposition \ref{prop_origin}, will imply that every $m$ under consideration is $(L(E)-o(1),2 s q_{k-k_0}-1)$-regular for $\sigma<1/5$.

\subsection{Liouville to non-resonant transition: proof of Lemma \ref{lemma_nonresonant_liouville}}
With $s$ defined as above in Subsection \ref{subsection_nonresonant_preliminaries}, we now have $s>q_{k-k_0}^{C_d}$. Let $s':=\lfloor s/10\rfloor$. Proceed with the same steps as above, with $s$ replaced by $s'$:
$$
J_0:=[0;q_{k-k_0}-1],
$$
$$
J_1:=[0;s' q_{k-k_0}-1]=\cup_{r=0}^{s'-1}(J_0+r q_{k-k_0}),
$$
$$
J:=(J_1-s'q_{k-k_0}-\lfloor s' q_{k-k_0}/2\rfloor)\cup(J_1-s'q_{k-k_0}-\lfloor s' q_{k-k_0}/2\rfloor+m).
$$
In view of Remark \ref{rem_sprime}, $J$ consists of $2 s' q_{k-k_0}$ distinct points, and the corresponding points of $J\alpha$ are $\frac{1}{3 q_k}$-separated modulo $1$. The remaining part the proof of Lemma \ref{lemma_liouville_diophantine}, verbatim, with $\frac{1}{4 q_k}$ in \eqref{eq_verbatim} replaced by $\frac{1}{6 q_k}$.\,\,\qed

\subsection{Domination of resonant regions} Let $\psi$ be a generalized eigenfunction as in the proof of Theorem \ref{th_main} (Subsection \ref{subsection_proof_main}). In this short section, we will state an auxiliary lemma that would allow us to reduce estimates of the values $|\psi(m)|$ to those for resonant $m$. Recall the definition of resonant points, and denote
\beq
\label{eq_resonant_points_copy}
R_\ell:=[\ell q_k-b_k;\ell q_k+b_k], \quad r_{\ell}:=\max\{|\psi(m)|\colon m\in R_{\ell}\}=|\psi(m_{\ell})|,\quad m_{\ell}\in R_{\ell}.
\eeq
\begin{lemma}
\label{lemma_domination_resonant}
Let $m$ be non-resonant as follows:
$$
m\in [\ell q_k+b_k;(\ell+1)q_k-b_k],\quad |\ell|\le \frac{q_{k+1}}{10 q_k}.
$$
Then
\beq
\label{eq_decay_between_copy}
|\psi(m)|\le r_{\ell}e^{-(L(E)-o(1))\dist (m,R_{\ell})}+r_{\ell+1}e^{-(L(E)-o(1))\dist (m,R_{\ell+1})}.
\eeq
\end{lemma}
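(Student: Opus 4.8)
The plan is to run the patching machinery of Subsection~\ref{subsection_patching} on the non-resonant block lying strictly between $R_\ell$ and $R_{\ell+1}$, feeding it only the good intervals produced by Lemmas~\ref{lemma_nonresonant_diophantine} and \ref{lemma_nonresonant_liouville}. The key observation is that, by the scale choice \eqref{eq_scales_relations}, these non-resonant good intervals are short enough never to reach the neighbouring resonant zones, so every Poisson iteration leaving the block is forced to terminate inside $R_\ell$ or $R_{\ell+1}$; Remark~\ref{rem_same_exponent} then reads off \eqref{eq_decay_between_copy}.

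Concretely, I would take $[N_1;N_2]:=[\ell q_k+b_k+1;(\ell+1)q_k-b_k-1]$ and $[N_1^-;N_2^+]:=[\ell q_k-b_k;(\ell+1)q_k+b_k]$, so that $[N_1^-;N_1)=R_\ell$, $(N_2;N_2^+]=R_{\ell+1}$, and every point of $[N_1;N_2]$ is non-resonant (the two endpoints of the interval in the statement are themselves resonant, hence excluded by the hypothesis, and there $\dist(m,R_\ell)=0$ or $\dist(m,R_{\ell+1})=0$ makes \eqref{eq_decay_between_copy} trivial); the assumption $|\ell|\le\frac{q_{k+1}}{10q_k}$ guarantees that all of $[N_1^-;N_2^+]$ lies in the range where Lemmas~\ref{lemma_nonresonant_diophantine}--\ref{lemma_nonresonant_liouville} apply. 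For each $m'\in[N_1;N_2]$ I would build $q_{k-k_0},s$ from $m'$ exactly as in Subsection~\ref{subsection_nonresonant_preliminaries}; Lemma~\ref{lemma_nonresonant_diophantine} (when $s\le q_{k-k_0}^{C_d}$) or Lemma~\ref{lemma_nonresonant_liouville} (when $s>q_{k-k_0}^{C_d}$) then shows $m'$ is $(L(E)-o(1),L_{m'})$-regular, with $L_{m'}=2sq_{k-k_0}-1$ or $2s'q_{k-k_0}-1$, which in particular makes the corresponding interval a $\psi$-good interval around $m'$. By \eqref{eq_scales_relations} this length obeys $L_{m'}<\dist(m',q_k\Z)$ and, since $2sq_{k-k_0}>\tfrac12\tau_kq_k$, also $L_{m'}\ge c\tau q_k$ for some absolute $c>0$ and $k$ large. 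The first inequality is the geometric heart of the argument: a balanced interval $[n_1;n_2]$ of length $L_{m'}$ around $m'$ has $n_1>m'-L_{m'}\ge m'-\dist(m',q_k\Z)\ge\ell q_k$ (as $\ell q_k\in q_k\Z$ lies to the left of $m'$) and symmetrically $n_2<(\ell+1)q_k$, so $[n_1;n_2]\subset[\ell q_k;(\ell+1)q_k]\subset[N_1^-;N_2^+]$. Together with $e^{(L(E)-o(1))\sigma c\tau q_k}>2$, valid for large $k$ since $L(E)>0$, this shows that $[N_1;N_2]\subset[N_1^-;N_2^+]$ admits a $(\psi,L(E)-o(1),c\tau q_k)$-good collection of intervals.

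Then Lemma~\ref{lemma_terminating}, Lemma~\ref{lemma_dominating} and Remark~\ref{rem_same_exponent} give, for $m\in[N_1;N_2]$,
\[
|\psi(m)|\le e^{-w(P_1)(1-o(1))}\,r_\ell+e^{-w(P_2)(1-o(1))}\,r_{\ell+1},
\]
where $w(P_1)$, $w(P_2)$ are the least weights of terminating paths from $m$ reaching $[N_1^-;N_1)=R_\ell$ and $(N_2;N_2^+]=R_{\ell+1}$ respectively, and the relevant maxima of $|\psi|$ over these two sets are $r_\ell$ and $r_{\ell+1}$ by definition. It remains to check $w(P_1)\ge(L(E)-o(1))\dist(m,R_\ell)$ and the symmetric bound for $w(P_2)$. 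Every leg of a path has length at least $\sigma L_{m'}\ge\sigma c\tau q_k$, so: if $\dist(m,R_\ell)\ge\sigma c\tau q_k$, any path from $m$ to $R_\ell$ has total length $\ge\dist(m,R_\ell)$ and hence weight $\ge(L(E)-o(1))(1-o(1))\dist(m,R_\ell)$; and if $\dist(m,R_\ell)<\sigma c\tau q_k$, then already the first leftward leg overshoots into $R_\ell$ and contributes weight $\ge(L(E)-o(1))\sigma c\tau q_k>(L(E)-o(1))\dist(m,R_\ell)$. Substituting these bounds (and the symmetric ones for $w(P_2)$) into the displayed inequality yields \eqref{eq_decay_between_copy}.

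I expect the main obstacle to be exactly the containment $[n_1;n_2]\subset[\ell q_k;(\ell+1)q_k]$ of every non-resonant good interval, which is what the scale bookkeeping of \eqref{eq_scales_relations} is designed to make possible; the secondary delicate point is the estimate of $w(P_1),w(P_2)$ for points $m$ sitting right against a resonant zone, where the naive bound from $|m-N_1|$ degenerates, and which is handled above by using that the good intervals there are long, of size comparable to $\tau q_k$. Everything else is a routine invocation of the general material in Subsection~\ref{subsection_patching}; this lemma is then the tool that, in Subsection~\ref{subsection_proof_main}, reduces the decay of $\psi$ on non-resonant points to its values on the resonant zones $R_\ell$.
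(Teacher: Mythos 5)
Your proposal is correct and takes essentially the same route as the paper, whose proof is a single sentence invoking Lemmas~\ref{lemma_terminating}, \ref{lemma_dominating} and Remark~\ref{rem_same_exponent} with the good intervals from Lemmas~\ref{lemma_nonresonant_diophantine}--\ref{lemma_nonresonant_liouville}. You have supplied the details the paper elides: the choice of windows $[N_1;N_2]\subset[N_1^-;N_2^+]$ identifying the exit sets with $R_\ell$ and $R_{\ell+1}$, the containment of the non-resonant good intervals inside $[\ell q_k;(\ell+1)q_k]$ via the scale bound $2sq_{k-k_0}\le\dist(m',q_k\Z)$, and the path-weight estimate near the resonant boundary where the naive $|m-N_1|$ bound degenerates.
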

\begin{proof}
Follows from lemmas \ref{lemma_dominating} and \ref{lemma_terminating} in the setting of Remark \ref{rem_same_exponent}, with good intervals obtained by the means of Lemmas \ref{lemma_nonresonant_diophantine}, \ref{lemma_nonresonant_liouville} (two previous subsections).
\end{proof}
\subsection{Estimates in the resonant regions: proof of Lemma \ref{lemma_resonant}} 
Suppose that $m$ is resonant: that is,
$$
m\in [\ell q_k-b_k,\ell q_k+b_k],\quad 0<|\ell q_k|\le \frac{q_{k+1}}{10}.
$$
Let
$$
J_0:=[0;q_k-1];
$$
$$
J:=-\lfloor 3 q_k/2\rfloor+(J_0\cup(J_0+\ell q_k)).
$$
Under the above assumptions on $\ell$, the set $J$ has $2 q_k$ distinct integer points. By Proposition \ref{prop_rotation_trajectory}, the points of $J_0\alpha$ are $\frac{1}{2 q_k}$-separated modulo $1$. The set $(J_0+\ell q_k)\alpha$ is a translation of $J_0\alpha$ by $\ell q_k\alpha$. Note that 
$$
\frac{1}{q_{k+1}}<\|\ell q_k\alpha\|=\ell \|q_k\alpha\|\le \frac{1}{10 q_k}.
$$
As a consequence, the points of $J\alpha$ can be split into $q_k$ pairs. The distance between two points in the same pair (modulo 1) is equal to $|\ell| \|q_k\alpha\|$. The distance between two points in different pairs is at least $\frac{1}{2 q_k}-\frac{1}{10 q_k}\ge \frac{1}{3 q_k}$.

In view of Proposition \ref{prop_rotation_trajectory}, we have
$$
J_0\alpha\subset\bigcup_{j=0}^{q_k-1}\l[\frac{j}{q_{k}}-\frac{1}{100 q_{k}},\frac{j}{q_{k}}+\frac{1}{100 q_{k}}\r]\,\, \mathrm{mod}\,\,1,
$$
and therefore
$$
(J_0\cup(J_0+m))\alpha\subset\bigcup_{j=0}^{q_k-1}\l[\frac{j}{q_{k}}-\frac{1}{9 q_{k}},\frac{j}{q_{k}}+\frac{1}{9 q_{k}}\r]\,\, \mathrm{mod}\,\,1.
$$
Being a translation of the above set, the set $J\alpha$ is contained in a union of $q_k$ equally spaced modulo $1$ intervals of length $\frac{2}{9 q_k}$. Each interval contains exactly two points of $J\alpha$ (with distance between these points being $|\ell|\|q_k\alpha\|$). As before, we will call these intervals {\it sampling intervals}. Note that the $\frac{1}{9 q_k}$-neighborhoods of the sampling intervals are still disjoint modulo $1$.

Consider the determinant $P_{2q_k-1}(x)$. Since it has at most $2 q_k-1$ intersection points, there exists at least one sampling interval $I$ whose $\frac{1}{9 q_k}$-neighborhood $I'$ contains at most one intersection point. Apply Lemma \ref{lemma_sampling_points} with $d=|\ell| \|q_k\alpha\|$, $k=\ell=1$ (two sampling points, one intersection point). We thus obtain
$$
P_{n}(x+j\alpha)^{<B_-(n)}\ge |\ell|\|q_k\alpha\|,\quad n=2 q_k-1.
$$
We will also estimate (using Proposition \ref{prop_rotation_trajectory} and the definition of $\beta$:
$$
|\ell|\|q_k\alpha\|\ge \frac{|\ell|}{2 q_{k+1}}\ge e^{-(\beta+o(1)) q_k+\log |\ell|}.
$$
Overall, this implies after applying Theorem \ref{th_ldt} and absorbing several non-essential factors into $o(1)$:
$$
|P_{2 q_k-1}(x+j\alpha)|\ge e^{\log |\ell|+2 q_k(L(E)-\beta/2-o(1))}.
$$
Apply Theorem \ref{th_main_cramer}. Note that this is a somewhat unusual application since, in some cases, the matrix elements of the Green's function will be exponentially large and not exponentially small. However, they will be compensated later. For $m\in J':=[j;j+2 q_k-2]$ and every solution of the eigenvalue equation $\psi$ we have the following bound:
\begin{multline}
\label{eq_poisson_resonant1}
|\psi(m)|\le e^{-\log |\ell|-2(L(E)-\beta/2-o(1))q_k+L(E)(2 q_k+j-m)}|\psi(j-1)|\\
+e^{-\log |\ell|-2(L(E)-\beta/2-o(1))q_k+L(E)(m-j)|}|\psi(j+2 q_k-1)|\\
\le e^{-\log |\ell|+(\beta+o(1))q_k}\left(e^{-L(E)(m-j)}|\psi(j-1)|+e^{-L(E)(2 q_k+j-m)}|\psi(j+2 q_k-1)|\right).
\end{multline}
Since the length of $J'$ is $2 q_k-1$ and due to the construction of $J$, at least one resonant region $R_s$ (see \eqref{eq_resonant_points_copy}) satisfies the property 
$$
R_s\subset J',\quad \dist(R_s,\partial J')>\frac{q_k}{2}(1-4\tau).
$$
In fact, this will always be true with $s=0$ or $s=\ell$. Apply \eqref{eq_poisson_resonant1} to the point $m_s\in R_s$ where $|\psi|$ attains its maximal value (see \eqref{eq_resonant_points_copy}). It is easy to see that the corresponding points $j-1$ and $j+2 q_k-1$ are located strictly between $R_{s-2}$ and $R_{s+2}$. For either of these points that are non-resonant, apply \eqref{eq_decay_between_copy}. After absorbing several factors into $o(1)$, we finally arrive to
\beq
\label{eq_decay_resonances}
r_s\le e^{-\log |\ell|-(L(E)(1-2\tau)-\beta-o(1))q_k}(r_{s+1}+r_s+r_{s-1})+e^{-\log |\ell|-(2 L(E)(1-2\tau)-\beta-o(1))q_k}(r_{s+2}+r_{s-2}).
\eeq
Choose $\tau>0$ small enough so that 
\beq
\label{eq_tau_small}
L(E)(1-2\tau)-\beta>0
\eeq
With this choice, one can move $r_s$ from the right hand side to the left hand side. For every $m\in R_s$ this implies either
$$
|\psi(m)|\le e^{-\log |\ell|-(L(E)(1-2\tau)-\beta-o(1))q_k}|\psi(m_{s+1})|+e^{-\log |\ell|-(L(E)(1-2\tau)-\beta-o(1))q_k}|\psi(m_{s-1})|
$$
or
$$
|\psi(m)|\le e^{-\log |\ell|-(2 L(E)(1-2\tau)-\beta-o(1))q_k}\psi(m_{s-2})|+e^{-\log |\ell|-(2 L(E)(1-2\tau)-\beta-o(1))q_k}|\psi(m_{s+2})|.
$$
Here we used the inequality $|a+b|\le 2\max\{|a|,|b|\}$ and absorbed the factor $2$ into $o(1)$.

In view of \eqref{eq_tau_small}, the first inequality would imply that the interval $[m_{s-1},m_{s+1}]$ is $(L(E)(1-2\tau)-\beta,D)$-good for $\psi$, where $|D-2 q_k|\le 2\tau q_k$. In that case, $\psi$ is $(L(E)(1-2\tau)-\beta,D)$-regular at $m$ with $|D-2 q_k|\le 2\tau q_k$. In case of second inequality, $\psi$ is $(L(E)(1-2\tau)-\beta/2,D)$-regular at $m$ with $|D-4 q_k|\le 8\tau q_k$.

From Proposition \ref{prop_origin}, we cannot have $s=0$, and
therefore $s=\ell$, and we have established regularity of every $m\in
R_{\ell}$.\,\,\qed

\section*{Acknowledgements} The work of S. J. was partially supported by NSF DMS--2052899, DMS--2155211, and Simons 896624. The work of I. K. was supported by  NSF DMS--1846114, DMS--2052519, and the 2022 Sloan Research Fellowship.


\begin{thebibliography}{99}
\bibitem{AYZ}Avila~A., You~J., Zhou~Q., {\it Sharp phase transitions
    for the almost Mathieu operator}, Duke Math. J. 166 (2017),
  no. 14, 2697 -- 2718.
  \bibitem{martini}Avila~A., Jitomirskaya~S., {\it The Ten Martini Problem}, Ann. Math. 170 (2009), no. 1, 303 -- 342.
  \bibitem{as} J. Avron and B. Simon. {\it Singular continuous spectrum for
    a class of almost periodic Jacobi matrices}. Bull. Amer. Math. Soc. 6 (1982), no. 1, 81 -- 85.
    \bibitem{Berez1} Berezanskii~Ju. M., {\it Expansions in
        Eigenfunctions of Selfadjoint Operators}, Translations of
      Mathematical Monographs, Vol 17, American Mathematical Society,
      1968.
      \bibitem{bbook} J. Bourgain. Green's function estimates for lattice Schr\"odinger operators and applications. Annals of Mathematics Studies 158 (2005), Princeton University Press, Princeton, NJ.
\bibitem{CFKS} Cycon~H., Froese~R., Kirsh~W., Simon~B., 
    Schr\"odinger Operators: With Applications to Quantum Mechanics
    and Global Geometry, Springer--Verlag, 1987.
  \bibitem{Damanik_Killip} Damanik~D., Killip~R., {\it Ergodic potentials with a discontinuous sampling function are non-deterministic}, Math. Res. Lett. 12 (2005), 187~--~192.
%\bibitem{Fu}Furman~A., {\it On the multiplicative ergodic theorem for
  %  uniquely ergodic systems}, Ann. Inst. H. Poincar\'e
 % Probab. Statist. 33 (1997), no. 6, 797 -- 815

\bibitem{gj} L. Ge, S. Jitomirskaya, Hidden subcriticality,
    symplectic structure, and sharp arithmetic spectral results for
    type I operators. Preprint.
 \bibitem{gjy}L. Ge, S. Jitomirskaya and J. You. {\it Kotani theory, Puig's argument and stability of The Ten Martini Problem}, preprint (2024), \url{https://arxiv.org/pdf/2308.09321}.
\bibitem{Gor}Gordon~A., {\it The point spectrum of the one-dimensional
    Schr\"odinger operator}, Uspehi Mat. Nauk 31 (1976), 257 -- 258.
\bibitem{RuiShnol}Han~R., {\it Sch'nol's theorem and the spectrum of long range operators}, Proc. Amer. Math. Soc.147 (2019), no. 7, 2887 -- 2897.
 \bibitem{lpos}Han~R., {\it Sharp localization on the first supercritical
    stratum for Liouville frequencies}, preprint (2024), \url{https://arxiv.org/pdf/2405.07810}.
\bibitem{jcongr}Jitomirskaya~S., {\it Almost everything about the almost Mathieu operator, II}, in Proc. of XI Int. Congress of Math. Physics, Int. Press, Somerville, Mass. (1995), 373 -- 382.
\bibitem{Lana94}Jitomirskaya~S., {\it Anderson localization for the almost Mathieu equation: a nonperturbative proof}, Comm. Math. Phys. 165 (1994), no. 1, 49 -- 57.
  \bibitem{wm} Jitomirskaya~S., {\it Dual Lyapunov exponents and
      operators of type I}, to appear in Proceedings of $(WM)^222.$
\bibitem{Lana99} Jitomirskaya~S., {\it Metal-Insulator Transition for
    the almost Mathieu Operator}, Ann. of Math. 150 (1999), no. 3,
  1159~--~1175.
  \bibitem{JHY}Jitomirskaya~S., Han~R., Yang~F., {\it Anti-resonances and sharp analysis of Maryland localization for all parameters}, preprint (2022), \url{https://arxiv.org/abs/2205.04021}.
\bibitem{JK}Jitomirskaya~S., Kachkovskiy~I., {\it All couplings
    localization for quasiperiodic operators with monotone
    potentials}, J. Eur. Math. Soc. 21 (2019), 777 -- 795.
  \bibitem{jkgordon}Jitomirskaya~S., Kachkovskiy~I. {\it Gordon's theorem for quasiperiodic potentials with some discontinuous potentials}, preprint.
\bibitem{LanaMavi} Jitomirskaya~S., Mavi~R., {\it Dynamical bounds for quasiperiodic Schr\"odinger operators with rough
	potentials}, Int. Math. Res. Not. (2017), no. 1, 96 -- 120.
\bibitem{JL}Jitomirskaya~S., Liu~W., {\it Arithmetic spectral
    transitions for the Maryland model}, Comm. Pure Appl. Math. 70 (2017), no. 6, 1025 -- 1051.
\bibitem{JL2}Jitomirskaya~S.,  Liu~W., {\it Universal hierarchical structure of quasiperiodic eigenfunctions}, Ann. of Math. 187 (2018), no. 3, 721 -- 776.
\bibitem{JL3}Jitomirskaya~S.,  Liu~W., {\it Universal reflective-hierarchical structure of quasiperiodic eigenfunctions and sharp spectral transitions in phase}, J. Eur. Math. Soc., to appear (2023), \url{https://arxiv.org/abs/1802.00781}.
\bibitem{JY}Jitomirskaya~S., Yang~F., {\it Singular continuous
    spectrum for singular potentials},  Comm. Math. Phys. 351 (2017),
  1127 -- 1135.
  \bibitem{Kach}Kachkovskiy~I., {\it Localization for quasiperiodic operators with unbounded monotone potentials}, J. Funct. Anal. 277 (2019), no. 10, 3467 -- 3490.
\bibitem{Zhu}Kerdboon~J., Zhu~X., {\it Anderson Localization for Schr\"odinger Operators with Monotone Potentials over Circle Homeomorphisms}, preprint (2023), \url{https://arxiv.org/abs/2305.17599}.
\bibitem{Khinchine} Khinchin~A., {\it Continued Fractions}, Dover
  Books on Mathematics, 1997.
\bibitem{Kuipers}Kuipers~L., Niederreiter H., {\it Uniform Distribution of Sequences}, John Wiley \& Sons, 1974.
  \bibitem{Lresonant}Liu~W., {\it Small denominators and large numerators of quasiperiodic Schrödinger operators}. To appear in Peking Math. J.
\bibitem{mz} Mandelshtam~V., Zhitomirskaya~S., {\it 1D-quasiperiodic operators. Latent symmetries}, Comm. Math. Phys. 139 (1991), 589 -- 604.

\bibitem{Raven}Ravenstein~T., {\it The Three Gap Theorem (Steinhaus Conjecture)}, J. Austral. Math. Soc. Ser. A 45 (1988), no. 3, 360 -- 370.
\bibitem{Olympiad}Sadovnichii~V. A, Podkolzin~A. S., {\it Problems at the student mathematical olympiads}, ``Nauka'' Moscow, 1978 (in Russian), problem 491.
\bibitem{simmar} Simon~B., {\it Almost periodic Schr\"odinger operators. IV. The Maryland model.} Ann. Physics 159 (1985),
no. 1, 157 -- 183.
  \bibitem{simonspectral}Simon~B., {\it Equilibrium measures and capacities
    in spectral theory}, Inverse Problems and Imaging 1 (2007), no. 4, 713 -- 772.
\bibitem{Simon_semi}Simon~B., {\it Schr\"odinger semigroups},
  Bull. Amer. Math. Soc. (N. S.) 7 (1982), no. 3, 447 -- 526.
\bibitem{Simon_Spencer}Simon~B., Spencer~T., {\it Trace class perturbations and the absence of absolutely continuous spectra}, Comm. Math. Phys. 125 (1909), no. 1, 113 -- 125.

\bibitem{Snol}Snol, E., {\it On the behavior of the eigenfunctions of Schr\"odinger's equation}, Math. Sb. (N. S.) 42 (84) (1957), 273 -- 286; erratum 46 (88) (1957), 259.
\end{thebibliography}
\end{document}